\newtheorem{cont}{cont}[subsection]
\newtheorem{thm}[cont]{Theorem}
\newtheorem*{thm*}{Theorem}
\newtheorem{prop}[cont]{Proposition}
\newtheorem{lemma}[cont]{Lemma}
\newtheorem{cor}[cont]{Corollary}
\newtheorem{dfn}[cont]{Definition}
\numberwithin{equation}{section}
\theoremstyle{remark}
\newtheorem{rem}[cont]{Remark}
\newcommand{\shF}{\mathcal{F}}
\newcommand{\shG}{\mathcal{G}}
\newcommand{\shH}{\mathcal{H}}
\newcommand{\shI}{\mathcal{I}}
\newcommand{\shL}{\mathcal{L}}
\newcommand{\shM}{\mathcal{M}}
\newcommand{\shE}{\mathcal{E}}
\newcommand{\OO}{\mathcal{O}}
\newcommand{\odi}[1]{\mathcal{O}_{#1}}
\newcommand{\then}{\ \Longrightarrow \ }
\newcommand{\arr}{\longrightarrow}
\newcommand{\arrdi}[1]{\xlongrightarrow{#1}}
\newcommand{\E}{\mathcal{E}}
\newcommand{\duale}[1]{{#1}^{\vee}}
\newcommand{\PP}{\mathbb{P}}
\DeclareMathOperator{\Hl}{H}
\DeclareMathOperator{\h}{h}
\DeclareMathOperator{\Pic}{Pic}
\DeclareMathOperator{\rk}{rk}
\DeclareMathOperator{\Hom}{Hom}
\newcommand{\Ext}{\textup{Ext}}
\DeclareMathOperator{\depth}{depth}
\DeclareMathOperator{\di}{dim}
\DeclareMathOperator{\id}{id}
\DeclareMathOperator{\de}{deg}
\begin{document}

\title{ACM BUNDLES ON DEL PEZZO SURFACES}

\author[Joan Pons-Llopis]{Joan Pons-Llopis}

\address{Facultat de Matem\`{a}tiques, Universitat de Barcelona, Gran V\'ia 585, 08007 Barcelona,Spain.}

\email{jfpons@ub.edu}

\author[Fabio Tonini]{Fabio Tonini}

\address{Scuola Normale Superiore, Piazza dei Cavalieri 7, 56100, Pisa, Italy}

\email{fabio.tonini@sns.it}

\date{}

\thanks{The first author is supported by the research project MTM2006-04785.}

\subjclass[2000]{14J60, 14F05.}
\keywords{ACM bundles, del Pezzo surfaces.}

\maketitle

\begin{abstract}
\noindent ACM rank 1 bundles on del Pezzo surfaces are classified in terms of the rational normal curves that they contain. A complete list
of ACM line bundles is provided. Moreover, for any del Pezzo surface $X$ of degree less or equal than six and for any $n\geq 2$ we construct a family of dimension $\geq n-1$ of non-isomorphic simple ACM bundles of rank $n$ on $X$.
\end{abstract}

\section{Introduction}
Given a $n$-dimensional smooth projective variety $X$ over an algebraically closed field $k$ and a very ample line bundle $\OO_X(1)$ on it, associated to any vector bundle $\E$ on $X$ we have the cohomology groups $\Hl^i(X,\E(l))$ for $i\in\{0,\ldots ,n\}$ and $l\in\mathbb{Z}$. It's well-known that for $l$ big enough $\Hl^0(X,\E(l))\neq 0$ and by Serre duality there also exists $l$ such that $\Hl^n(X,\E(l))\neq 0$. Therefore we have only freedom to ask for the vanishing of the intermediate cohomology groups. The vector bundles for which this vanishing is achieved are called \emph{Arithmetically Cohen-Macaulay bundles} (\emph{ACM} for short).

It becomes a natural question to study the complexity of the structure of ACM bundles on a given variety. The first result addressing this question was Horrocks' theorem which states that on $\PP_k^n$ the only indecomposable ACM bundle up to twist is the structure sheaf $\OO_{\PP_k^n}$. Later on, Kn\"{o}rrer in \cite{Kno} proved that on a smooth quadric hypersurface X the only indecomposable ACM bundles up to twist are $\OO_X$ and the spinor bundles $S$ (which are one or two according to the parity of the dimension of the quadric).

A complete list of varieties that admit only a finite number of indecomposable ACM bundles (up to twist and isomorphism) was given in \cite{BGS} and \cite{EH}: assuming that $X$ has only finitely many indecomposable vector bundles, then $X$ is either a projective space $\PP_k^n$, a smooth quadric, a cubic scroll in $\PP_k^4$, the Veronese surface in $\PP_k^5$ or a rational normal curve. They have been called varieties of \emph{finite representation type} (see \cite{DG} and references herein).

On the other extreme there would lie those varieties of \emph{wild representation type}, namely, varieties for which there exist $n$-dimensional families of non-isomorphic indecomposable ACM bundles for arbitrary large $n$. In the one dimensional case, it's known that curves of wild representation type are exactly those of genus larger or equal than two. For varieties of larger dimension, in \cite{CH} Casanellas and Hartshorne were able to construct on a smooth cubic surface in $\PP_k^3$ for any $n\geq 2$ a $n^2+1$-dimensional family of rank $n$ indecomposable ACM vector bundles with Chern classes $c_1=nH$ and $c_2= \frac{1}{2}(3n^2-n)$. Moreover, Faenzi in \cite{Fa} was able to give a precise classification of rank 2 ACM bundles on cubic surfaces. He proved that they fall in 12 classes according to their minimal free resolution as coherent $\OO_{\PP_k^3}$-sheaves.

In this paper we focus our attention on ACM bundles on a class of surfaces that contains the smooth cubic surfaces as a particular case. This class of surfaces, known as \emph{del Pezzo surfaces}, has a very nice description in terms of blow-ups of general points in $\PP_k^2$ and has been broadly studied. Good sources are \cite{Dem} and \cite{Man} where arithmetic aspects of these surfaces are also studied. For a pure geometrical introduction we recommend \cite{Dol}. The first question that we address in this paper is the geometrical characterization of ACM line bundles $\shL$. Since we're interested in bundles up to twist we're only going to work with \emph{initialized} bundles, meaning that $\Hl^0(X,\shL)\neq 0$ but $\Hl^0(X,\shL(-1))=0$. Our result concerning these issues can be stated as follows (see Theorem \ref{maintheorem}):

\begin{thm*}
Let $X\subseteq\PP_k^d$ be a del Pezzo surface of degree $d$ embedded through the very ample divisor $-K_X$. Then a line bundle $\shL$ on $X$ is initialized and ACM if and only if either $\shL\cong\OO_X$ or $\shL\cong\OO_X(D)$ for a rational normal curve $D\subseteq X\subseteq\PP_k^d$ of degree less or equal than $d$.
\end{thm*}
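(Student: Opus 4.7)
The plan is to treat the two implications separately, with the short exact sequence
\[
0 \to \OO_X(-D) \to \OO_X \to \OO_D \to 0
\]
(and its twist $0 \to \OO_X \to \OO_X(D) \to \OO_D(D) \to 0$) as the main tool, leaning on two standard facts: (i) $X$ is projectively normal in $\PP_k^d$ (i.e.\ $\OO_X$ itself is ACM), and (ii) a rational normal curve is ACM in its ambient projective space.

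For the ``if'' direction, the case $\shL\cong\OO_X$ reduces to (i) together with Serre duality for negative twists. For $\shL\cong\OO_X(D)$ with $D$ a rational normal curve of degree $e$, I would tensor the above sequence by $\OO_X(mH)$ where $H=-K_X$; the outer terms become $\OO_X(mH)$ and $\OO_D((D+mH)|_D)=\OO_{\PP^1}((m+1)e-2)$. For $m\ge 0$ the vanishings $H^1(\OO_X(mH))=H^2(\OO_X(mH))=0$ reduce $H^1(\shL(m))$ to $H^1$ of a line bundle on $\PP^1$ of degree $\ge -1$, which is zero. For $m<0$ I apply Serre duality to convert the required vanishing into $H^1(\OO_X(-D+m'H))=0$ for some $m'\ge 1$, and use the sequence $0\to\OO_X(-D+m'H)\to\OO_X(m'H)\to\OO_D(m'H|_D)\to 0$: the relevant cokernel is zero because the surjection $H^0(X,\OO_X(m'H))\twoheadrightarrow H^0(D,\OO_D(m'H|_D))$ is the combination of (i) and (ii). Initialization of $\OO_X(D)$ follows because $(D+K_X)\cdot(-K_X)=e-d\le 0$ rules out effectivity unless $D\sim -K_X$, and this last possibility is excluded since it would force $p_a(D)=1$.

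For the ``only if'' direction, assume $\shL$ is initialized and ACM, $\shL\not\cong\OO_X$. Choose an effective $D\in|\shL|$ and set $e:=D\cdot(-K_X)\ge 1$. The ACM condition at $l=-1$ gives $H^1(\OO_X(D+K_X))=0$; Serre duality converts this to $H^1(\OO_X(-D))=0$, and initialization plus Serre duality gives $H^2(\OO_X(-D))=0$. Riemann--Roch on $X$ then forces $D^2=e-2$, hence $p_a(D)=0$ by adjunction, and the sequence shows $H^0(\OO_D)=k$, so $D$ is connected. Next, the ACM condition at $l=-2$ combined with Serre duality yields $H^1(\OO_X(-D+H))=0$, which via the twisted sequence produces the surjection $H^0(X,\OO_X(H))\twoheadrightarrow H^0(D,\OO_D(H|_D))$; since $X\hookrightarrow\PP_k^d$ is embedded by $|H|$, this says $D$ is linearly normal in $\PP_k^d$. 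A connected, irreducible, non-degenerate curve of arithmetic genus $0$ and degree $e$ spanning a $\PP_k^e$ is a rational normal curve by the classical theorem on varieties of minimal degree, and then $e\le d$ is automatic.

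The main obstacle is the irreducibility of $D$: nothing in the numerical data $D^2=e-2$, $D\cdot H=e$, $p_a(D)=0$ on its own prevents $D$ from being a reducible tree of rational curves. I would address this by computing $h^0(\shL)=\chi(\shL)=e$ from the ACM and initialization vanishings, so $|\shL|$ has dimension $e-1$, and then moving $D$ inside $|\shL|$: a Bertini-type argument combined with the classification of $p_a=0$ classes on a del Pezzo surface (the $(-1)$-curves, conics of a ruling, twisted cubics, and so on) should force a general member to be smooth and irreducible, hence a smooth $\PP^1$. Alternatively, iterating the surjectivity $H^0(X,\OO_X(mH))\twoheadrightarrow H^0(D,\OO_D(mH|_D))$ over all $m\ge 1$ gives that $D$ itself is ACM in $\PP_k^d$, and an ACM, non-degenerate curve of degree equal to its span dimension must be irreducible by degree-span inequalities on each component, concluding the argument.
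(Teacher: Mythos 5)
Your ``if'' direction and the numerical part of your ``only if'' direction (Riemann--Roch forcing $D^2=D.H-2$, connectedness of $D$, linear normality from $\h^1(\shL(-2))=0$) run parallel to the paper's chain $(1)\Rightarrow(2)\Rightarrow(3)$ and are essentially correct. The genuine gap is exactly the step you flag yourself --- producing an \emph{irreducible} member of $|\shL|$ --- and neither of your two proposed fixes closes it. Your alternative (b) is in fact false: a connected, non-degenerate, ACM curve of arithmetic genus $0$ whose degree equals the dimension of its span need not be irreducible. Take $\shL=\odi{X}(l-e_1)$ on a blow-up of $r\geq 2$ points: it is initialized and ACM, and the member $(l-e_1-e_2)+e_2$ is a pair of $(-1)$-lines meeting in a point, i.e.\ a plane conic --- connected, ACM, non-degenerate in its span $\PP^2_k$, of degree $2$ with $p_a=0$ --- but it is not a rational normal curve. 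The degree-span inequalities only give that each irreducible component is a rational normal curve in its own span, with the spans meeting in a tree-like way; they do not give irreducibility. Note also that your argument is set up to show that the \emph{chosen} effective $D\in|\shL|$ is a rational normal curve, which cannot be true for an arbitrary member (as the example shows); what must be proved is that \emph{some} member is.

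Your fix (a) points in the right direction but is precisely where the real work lies, and ``Bertini plus the classification of $p_a=0$ classes'' cannot simply be cited: the classification of the relevant classes is Theorem \ref{explicit computation of initialized ACM line bundles}, which in the paper is \emph{deduced from} the main theorem, and Bertini alone does not apply because $|\shL|$ may have base components or be a single point of the linear system (e.g.\ when $\shL$ is the class of a $(-1)$-line). The paper supplies the missing content in two pieces: first, the Bertini-type criterion of Theorem \ref{smoothness criterion} (proved by induction on blow-downs, valid in arbitrary characteristic), saying that a nonzero effective divisor with $D.L\geq 0$ for every $(-1)$-line $L$ has an open set of smooth members with no $(-1)$-lines as components; second, inside the proof of $(3)\Rightarrow(4)$, a cohomological argument (setting $M=D-L$ and deriving both $\chi(-M)>0$ and $\chi(-M)\leq 0$) showing that if $D.L<0$ for some $(-1)$-line $L$ then $D=L$, which is itself a rational normal curve of degree $1$. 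Without these two steps, or an equivalent substitute (for instance carrying out the numerical classification of classes satisfying $D^2=D.H-2$, $0<D.H\leq d$ independently and exhibiting a smooth member in each class), the ``only if'' direction is not proved.
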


This result was already known in the case of the cubic surface (cfr. \cite{Fa}) and in the case of the del Pezzo surface of degree $4$ (cfr. \cite{Mir}).

Next, we turn our attention to the construction of indecomposable ACM bundles of higher rank for which we use a well-known method: extension of bundles. Thanks to the iteration of this method we obtained the main contribution of this paper, namely, del Pezzo surfaces of degree up to six are of wild representation type by constructing explicitly families of ACM sheaves with the required properties (see Theorem \ref{thetheorem}):

\begin{thm*}
Let $X$ be a del Pezzo surface of degree $\leq 6$. Then for any integer $n \geq 2$ there exists a family of dimension $\geq n-1$ of non-isomorphic simple ACM vector bundles of rank $n$.
\end{thm*}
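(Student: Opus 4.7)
The plan is to construct the required $(n-1)$-dimensional families by an inductive extension procedure, building rank $n$ ACM bundles as iterated extensions of initialized ACM line bundles classified in the preceding theorem. At each step the extension must be chosen so as to simultaneously preserve the ACM property, preserve simplicity, and contribute at least one new modulus.

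First I would identify, for each del Pezzo surface $X$ of degree $d\le 6$, a suitable pair of initialized ACM line bundles $\shL,\shM$ (necessarily of the form $\OO_X$ or $\OO_X(D)$ with $D$ a rational normal curve in $X$) such that
\[
\dim \Ext^1(\shL,\shM)\;=\;\dim \Hl^1(X,\shM\otimes\shL^{-1})\;\geq\;2.
\]
For every non-split class, the associated extension $0\to\shM\to\E_2\to\shL\to 0$ yields a rank $2$ vector bundle. Twisting and running the long cohomology sequence shows $\Hl^1(X,\E_2(t))=0$ for all $t$, because the corresponding groups for $\shL$ and $\shM$ vanish by hypothesis, so $\E_2$ is ACM. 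Simplicity follows by applying $\Hom(-,\E_2)$ and $\Hom(\E_2,-)$ to the sequence: if in addition $\Hom(\shL,\shM)=0$ (to be enforced through the intersection theory on $X$ via a suitable choice of the divisors involved), then $\Hom(\E_2,\E_2)=k$ amounts to the non-vanishing of the extension class, and the $1$-parameter family on $\mathbb{P}(\Ext^1(\shL,\shM))$ gives the case $n=2$.

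The inductive step is analogous. Given a simple ACM bundle $\E_{n-1}$ of rank $n-1$, I would choose an initialized ACM line bundle $\shN$ (perhaps after a twist) satisfying three properties: (i) $\dim \Ext^1(\E_{n-1},\shN)\geq 2$, to contribute a fresh modulus; (ii) $\Hom(\shN,\E_{n-1})=\Hom(\E_{n-1},\shN)=0$, which together with the non-splitness of the sequence
\[
0\longrightarrow \shN \longrightarrow \E_n \longrightarrow \E_{n-1}\longrightarrow 0
\]
implies that $\E_n$ is simple; (iii) $\shN$ is ACM, so that the cohomology sequence forces $\E_n$ to be ACM as well. Two extensions with proportional classes yield isomorphic bundles, but different projective classes produce non-isomorphic $\E_n$, since $\shN\hookrightarrow\E_n$ is recovered intrinsically using the simplicity of both $\E_{n-1}$ and $\shN$ and the vanishings in (ii). Iterating $n-1$ times yields a family of simple ACM rank $n$ bundles of dimension at least $n-1$.

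The main obstacle will not be the formal mechanism of the induction, but the combinatorial problem of choosing the line bundles $\shN$ at every stage so that all the required $\Ext$ non-vanishings and $\Hom$ vanishings hold simultaneously. This is particularly delicate on low-degree del Pezzo surfaces ($d=1,2$), where the supply of rational normal curves and the possible intersection numbers with $\E_{n-1}$ are more restricted. I expect to reduce these verifications to explicit calculations on the Picard lattice using the intersection pairing and the anticanonical divisor $-K_X$, organized by the degree of $X$, and to exploit the fact that extending by $\OO_X$ or by line bundles attached to lines or conics on $X$ provides the cleanest vanishing patterns needed to sustain the induction.
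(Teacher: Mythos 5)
Your formal mechanism (extension of ACM by ACM is ACM; simplicity from non-splitness plus the two $\Hom$-vanishings; projectivized extension classes giving non-isomorphic middle terms) is sound, and it is the same basic engine the paper uses. But as a proof of the theorem the proposal has a genuine gap, and you name it yourself: the entire content of the statement lies in exhibiting, for each degree $3\leq d\leq 6$ and \emph{uniformly in $n$}, line bundles for which the conditions $\Hom=0$ and $\dim\Ext^1\geq 2$ hold simultaneously at every stage, and this is exactly what you defer. An unbounded induction in which a new $\shN$ must be found at each rank requires an infinite sequence of such verifications, and nothing in the proposal shows the process can be sustained (indeed, natural first choices fail: if $\shE_2$ is an extension of $\odi X(C)$ by $\odi X(E)$, then $\Hom(\odi X(E),\shE_2)\neq 0$ and $\Hom(\shE_2,\odi X(C))\neq 0$, so neither of the two line bundles already used can be reused at the next step without a new computation). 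The paper avoids this unbounded problem altogether: it fixes, from the explicit classification, two initialized ACM divisors $C,D$ of maximal degree $d$ with $C.D=d+1$, sets $E=2H-C$, $F=2H-D$, and proves a clean formula $\dim\Ext^1(\odi X(C'),\odi X(D'))=1+C'.D'-d$ together with $\Hom=\Ext^2=0$ for distinct maximal-degree initialized ACM divisors. The architecture is then only three layers deep for every $n$: finitely many rank-two extensions $\shE_i$ of $\odi X(D)$ by $\odi X(C)$, then a single extension of $\odi X(E)$ by the direct sum $\bigoplus_i\shE_i$ (and one further extension by $\odi X(F)$ for even rank). The moduli come from the last $\Ext^1$, whose dimension grows linearly in $n$, and a dedicated proposition about extensions by direct sums of simple sheaves (with weak equivalence) shows these are simple and pairwise non-isomorphic, parametrized by a product of projective spaces. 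So only a fixed, finite set of $\Hom/\Ext$ computations among $C,D,E,F$ is ever needed, in contrast to your scheme.

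Two further points. First, your dimension count relies on moduli accumulating across stages: you must show that extensions of non-isomorphic $\shE_{n-1}$'s remain non-isomorphic. Your remark that $\shN\hookrightarrow\shE_n$ is intrinsic can be made to work (from the sequence one gets $\Hom(\shN,\shE_n)\cong k$, so the sub-line bundle and hence the quotient $\shE_{n-1}$ are canonically recovered, provided the \emph{same} $\shN$ is used for the whole family at that stage), but this is asserted rather than proved, and it again presupposes the undone vanishings at every rank, including $\dim\Ext^1(\shE_{n-1},\shN)\geq 2$ for all $n$. Second, your concern about degrees $1$ and $2$ is moot: the theorem concerns ACM bundles with respect to the anticanonical embedding, so only strong del Pezzo surfaces, i.e. $3\leq d\leq 6$, are in play; worrying about $d=1,2$ suggests the setting has not been pinned down. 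As it stands, the proposal is a plausible strategy sharing the paper's spirit, but the decisive numerical input—the explicit divisors and the $\Ext$-dimension formula that make the construction run for all $n$—is missing.
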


Let's recall briefly how this paper is organized: in the second section we introduce the necessary background on ACM bundles and del Pezzo surfaces. In section three we stress the properties of the lines that are contained in del Pezzo surfaces and we develop a Bertini-like theorem that expresses which linear systems contain smooth curves in terms of the intersection product with exceptional divisors. Most of the material from this section should be well-known, but we gather it here for the reader's convenience. In section four we classify ACM line bundles on del Pezzo surfaces and give a numerical characterization. In the last section we work out the construction of simple ACM bundles of higher rank.

This paper grows out of the problem that was posed to the authors during the P.R.A.G.MAT.I.C school held at the University of Catania in September 2009. This problem was proposed just in zero characteristic.

\section{Preliminaries}
We follow notation from \cite{Harti}. We are going to work with integral (i.e., reduced and irreducible) varieties over an algebraically closed field $k$ (of arbitrary characteristic). Given a smooth variety $X$ equipped with a very ample line bundle $\odi{X}(1)$ that provides a closed embedding in some $\PP^n_k$, the line bundle $\odi{X}(1)^{\otimes l}$ will be denoted by $\odi{X}(l)$ or $\odi{X}(l H)$. For any coherent sheaf $\E$ on $X$ we're going to denote the twisted sheaf $\shE\otimes\odi{X}(l)$ by $\shE(l)$. As usual, $\Hl^i(X,\E)$ (or simply $\Hl^i(\E)$) stands for the cohomology groups and $\h^i(X,\E)$ (or simply $\h^i(\E)$) for their dimension. For a divisor $D$ on $X$, $\Hl^i(D)$ and $\h^i(D)$ abbreviate $\Hl^i(X,\odi{X}(D))$ and $\h^i(X,\odi{X}(D))$ respectively. We will use the notation $\Hl^i_*(\shE)$ for the graded $k[X_0,\ldots ,X_n]$-module $\bigoplus_{l\in\mathbb{Z}} \Hl^i (\PP_k^n,\shE (l))$. $K_X$ will stand for the canonical class of $X$ and $\omega_X:=\odi{X}(K_X)$ for the canonical bundle.

We're going to say that $\shE$ is \emph{initialized} (with respect to $\odi{X}(1)$) if
$$
\Hl^0(X,\shE(-1))=0 \ \ \text{ but } \  \Hl^0(X,\shE)\neq 0.
$$
If $Y\subseteq X$ is a subvariety we denote the ideal sheaf of $Y$ in $X$ by $\shI_{Y|X}$ and the saturated ideal by $I_{Y|X}:=\Hl^0_*(X,\shI_{Y|X})$. Whenever we write a closed subvariety $X\subseteq\PP^n_k$, we consider it equipped with the very ample line bundle $\odi{\PP_k^n}(1)_{|X}$. We denote by $S_X$ the homogeneous coordinate ring, defined as $k[X_0,\ldots ,X_n]/I_X$.

\subsection{ACM varieties and sheaves}
This subsection will be devoted to recall the definitions and main properties of ACM varieties and sheaves.
\begin{dfn}(cfr. \cite[Chapter I, Definition 1.2.2]{Mig}).
A closed subvariety $X\subseteq\PP^n_k$ is \emph{Arithmetically Cohen-Macaulay (ACM)} if its homogeneous coordinate ring $S_X$ is Cohen-Macaulay or, equivalently, $\di S_X=\depth \ S_X$.
\end{dfn}

Notice that any zero-dimensional variety is ACM. For varieties of higher dimension we have the following characterization that will be used in this paper:
\begin{lemma}(cfr. \cite[ Chapter I, Lemma 1.2.3]{Mig}).\label{ACMcharacterization}
If $\di \ X\geq 1$, then $X\subseteq\PP^n_k$ is ACM if and only if $\Hl^i_*(\shI_X)=0$ for $1\leq i\leq \di X$.
\end{lemma}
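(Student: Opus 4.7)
The idea is to translate both sides of the equivalence into statements about the local cohomology of $S_X$ with support in the irrelevant ideal $\mathfrak{m}=(X_0,\ldots,X_n)$ of $R=k[X_0,\ldots,X_n]$, using the standard Serre--Grothendieck dictionary between local and sheaf cohomology on $\PP_k^n$.

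\textbf{Step 1: Local cohomology of $S_X$.} Since $\di S_X = \di X + 1$, the Cohen--Macaulay condition $\depth S_X = \di S_X$ is equivalent, by the characterization of depth via local cohomology, to the vanishing $H^i_{\mathfrak{m}}(S_X)=0$ for all $0\le i\le \di X$. Because $I_X$ is taken to be the saturated ideal, $S_X$ has no submodule supported at $\mathfrak{m}$, so $H^0_{\mathfrak{m}}(S_X)=0$ holds automatically. Hence ACM is equivalent to the vanishing $H^i_{\mathfrak{m}}(S_X)=0$ for $1\le i\le \di X$.

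\textbf{Step 2: Translation to sheaf cohomology.} Applying the standard exact sequence relating a graded module to the cohomology of its associated sheaf to $M=S_X$ (with $\widetilde{M}=\odi{X}$), I obtain the four-term sequence
$$0\to H^0_{\mathfrak{m}}(S_X)\to S_X\to \bigoplus_{l\in\Z}H^0(X,\odi{X}(l))\to H^1_{\mathfrak{m}}(S_X)\to 0$$
together with isomorphisms $H^i_{\mathfrak{m}}(S_X)\cong H^{i-1}_*(\odi{X})$ for $i\ge 2$. On the other hand, the long exact sequence attached to $0\to\shI_X\to\odi{\PP^n}\to\odi{X}\to 0$, combined with $H^j(\PP^n_k,\odi{\PP^n}(l))=0$ for $1\le j\le n-1$, yields
$$H^1_*(\shI_X)=\operatorname{coker}\bigl(S_X\to \bigoplus_{l}H^0(X,\odi{X}(l))\bigr),\qquad H^i_*(\shI_X)\cong H^{i-1}_*(\odi{X})\ \text{ for }2\le i\le n-1.$$
(Here one uses that, because $I_X$ is saturated, $(S_X)_l=R_l/H^0(\PP^n,\shI_X(l))$ injects into $H^0(X,\odi{X}(l))$, so the map $R_l\to H^0(\odi{X}(l))$ has the same cokernel as $S_X\to H^0_*(\odi{X})$.) Since $\di X\le n-1$ (the case $X=\PP^n$ being trivial), the ranges match and we get the identifications $H^i_{\mathfrak{m}}(S_X)\cong H^i_*(\shI_X)$ for $1\le i\le \di X$.

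\textbf{Step 3: Conclusion and obstacle.} Combining Steps 1 and 2, the Cohen--Macaulayness of $S_X$ is equivalent to the vanishing of $H^i_*(\shI_X)$ for $1\le i\le \di X$, which is exactly the claim. The only real point of care is the bookkeeping in Step 2: one must verify that the four-term sequence of $S_X$ and the long exact sequence from $0\to\shI_X\to\odi{\PP^n}\to\odi{X}\to 0$ piece together to give the desired isomorphism $H^1_{\mathfrak{m}}(S_X)\cong H^1_*(\shI_X)$, and that the vanishing of the intermediate cohomology of $\odi{\PP^n}$ is available throughout the range $1\le i\le \di X$. Since the proof is an application of well-known machinery, the author will likely just cite Migliore's book.
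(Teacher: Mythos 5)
Your proof is correct, and the paper does not actually prove this lemma at all: it is quoted with the citation \cite[Chapter I, Lemma 1.2.3]{Mig}, and your argument is exactly the standard one behind that reference, namely translating Cohen--Macaulayness of $S_X$ into the vanishing of $H^i_{\mathfrak{m}}(S_X)$ for $i\leq \di X$ (with $H^0_{\mathfrak{m}}(S_X)=0$ because $I_X$ is saturated) and identifying these with $\Hl^i_*(\shI_X)$ via the four-term sequence and the ideal-sheaf sequence, using $\Hl^j_*(\odi{\PP^n_k})=0$ for $1\leq j\leq n-1$. The bookkeeping you flag in Step 2 does go through (the saturation of $I_X$ gives the needed identification of cokernels, and the trivial case $X=\PP^n_k$ is rightly set aside), so there is no gap.
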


\begin{dfn}
Let $X$ be an ACM variety. A coherent sheaf $\E$ on $X$ is \emph{Arithmetically Cohen Macaulay} (ACM for short) if it is locally Cohen-Macaulay (i.e., $\depth \shE_x=\di \odi{X,x}$ for every point $x\in X$) and has no intermediate cohomology:
$$
\Hl^i_*(X,\shE)=0 \quad\quad \text{    for all $i=1, \ldots , \di X-1.$}
$$
\end{dfn}
Notice that when $X$ is a smooth variety, which is going to be mainly our case, any coherent ACM sheaf on $X$ is locally free; for this reason we're going to speak uniquely of ACM bundles.

\begin{lemma}
Let $X\subseteq\PP^n_k$ be an ACM variety. Then $\odi{X}$ is an ACM sheaf (seen as an $\odi{X}$-sheaf).
\end{lemma}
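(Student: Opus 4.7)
The plan is to verify the two defining properties of an ACM sheaf for $\shE=\odi{X}$: local Cohen--Macaulayness, and vanishing of intermediate cohomology $\Hl^i_*(X,\odi{X})=0$ for $1\leq i\leq \di X-1$.

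For the local part, I would argue as follows. The assumption that $X\subseteq\PP^n_k$ is ACM means by definition that the graded ring $S_X$ is Cohen--Macaulay. Since the Cohen--Macaulay property of a finitely generated graded $k$-algebra passes to arbitrary localizations, every local ring $\odi{X,x}$ is Cohen--Macaulay, i.e.\ $\depth\odi{X,x}=\di\odi{X,x}$. Thus the stalks of $\odi{X}$ have maximal depth, which is exactly the locally Cohen--Macaulay condition for $\odi{X}$ viewed as an $\odi{X}$-module. (If $\di X=0$ there is nothing further to check, so assume $\di X\geq 1$ from now on.)

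For the cohomological part, the key tool is the short exact sequence of sheaves on $\PP^n_k$
\[
0\arr\shI_X\arr\odi{\PP^n_k}\arr\odi{X}\arr 0,
\]
which I would twist by $\odi{\PP^n_k}(l)$ and use to read off the relevant piece of the long exact sequence:
\[
\Hl^i(\PP^n_k,\odi{\PP^n_k}(l))\arr \Hl^i(X,\odi{X}(l))\arr \Hl^{i+1}(\PP^n_k,\shI_X(l)).
\]
For $1\leq i\leq \di X-1$ one has $i\leq n-1$, so the left term vanishes by the standard computation of $\Hl^i(\PP^n_k,\odi{\PP^n_k}(l))$. Since $X$ is ACM of positive dimension, Lemma \ref{ACMcharacterization} gives $\Hl^j_*(\shI_X)=0$ for all $1\leq j\leq \di X$; applied with $j=i+1$, which satisfies $2\leq i+1\leq \di X$, the right term vanishes as well. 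Summing over $l\in\Z$ yields $\Hl^i_*(X,\odi{X})=0$ in the required range, completing the proof.

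No step presents a real obstacle; the only subtlety is remembering to invoke the graded-to-local transfer of Cohen--Macaulayness for the local condition, and to check that the range of $i$ in the long exact sequence fits simultaneously into the vanishing range for $\PP^n_k$ and into the range provided by Lemma \ref{ACMcharacterization}.
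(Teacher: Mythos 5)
Your proof is correct and follows essentially the same route as the paper: the cohomological vanishing is obtained from the ideal-sheaf sequence $0\to\shI_X\to\odi{\PP^n_k}\to\odi{X}\to 0$ together with Lemma \ref{ACMcharacterization}, and the local Cohen--Macaulayness is the standard transfer from the Cohen--Macaulayness of $S_X$ to the local rings $\odi{X,x}$, exactly as the paper invokes. You merely spell out the range checks that the paper leaves implicit.
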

\begin{proof}
The vanishing of $\Hl^i_*(\odi{X})$ is immediate from Lemma \ref{ACMcharacterization} and the short exact sequence defining $X$. On the other hand,
it's a well-known fact that $S_X$ being Cohen-Macaulay implies that $\odi{X,x}$ is Cohen-Macaulay for any $x\in X$.
\end{proof}

Once we work inside an ACM variety, the relation between ACM ideal sheaves and ACM subvarieties is very close:

\begin{lemma}\label{ACMvar}
	Let $X \subseteq \PP^n_k$ be an ACM smooth variety with $\di X \geq 1$ and $D$ be an integral effective divisor on $X$. Then the coherent $\odi{X}$-sheaf $\odi{X}(-D)$ is ACM if and only if  $D \subseteq \PP^n_k$ is an ACM variety.

\end{lemma}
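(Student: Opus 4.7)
The plan is to reduce both ACM conditions to a vanishing statement about the cohomology groups $\Hl^i(X,\odi{X}(-D)(l))$ and compare them using the inclusion $D\subseteq X\subseteq\PP^n_k$.

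First, since $X$ is smooth, $\odi{X}(-D)$ is a line bundle, hence locally free and in particular locally Cohen-Macaulay. So the definition of ACM sheaf applied to $\odi{X}(-D)$ becomes simply the vanishing condition
\[
\Hl^i(X,\odi{X}(-D)(l))=0 \quad\text{for all } l\in\Z \text{ and } 1\leq i\leq \di X-1.
\]
On the other hand, by Lemma \ref{ACMcharacterization}, the variety $D\subseteq\PP^n_k$ (of dimension $\di X-1$) is ACM if and only if
\[
\Hl^i(\PP^n_k,\shI_{D|\PP^n_k}(l))=0 \quad\text{for all } l\in\Z \text{ and } 1\leq i\leq \di X-1.
\]
So it suffices to produce a natural isomorphism between the two families of cohomology groups in the stated range.

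Next I would set up the short exact sequence of ideal sheaves on $\PP^n_k$,
\[
0\arr\shI_{X|\PP^n_k}\arr\shI_{D|\PP^n_k}\arr\shI_{D|X}\arr 0,
\]
and use the smoothness of $X$ and the fact that $D$ is an effective (Cartier) divisor to identify $\shI_{D|X}\cong\odi{X}(-D)$. Twisting by $\odi{\PP^n_k}(l)$ and passing to cohomology, the relevant piece of the long exact sequence is
\[
\Hl^i(\shI_{X|\PP^n_k}(l))\arr\Hl^i(\shI_{D|\PP^n_k}(l))\arr\Hl^i(\odi{X}(-D)(l))\arr\Hl^{i+1}(\shI_{X|\PP^n_k}(l)).
\]
Since $X$ is ACM, Lemma \ref{ACMcharacterization} gives $\Hl^j_*(\shI_{X|\PP^n_k})=0$ for $1\leq j\leq \di X$. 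Thus, for $1\leq i\leq \di X-1$ both the left and right terms vanish, yielding isomorphisms
\[
\Hl^i(\PP^n_k,\shI_{D|\PP^n_k}(l))\xlongrightarrow{\sim} \Hl^i(X,\odi{X}(-D)(l))
\]
for every $l\in\Z$ and every $i$ in the relevant range. Combining this with the reformulations in the first paragraph gives the equivalence.

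The argument is essentially a formal manipulation, so I do not expect any serious obstacle; the only points that require care are the boundary index $i=\di X-1$ (where one must make sure that $\Hl^{i+1}(\shI_{X|\PP^n_k}(l))=\Hl^{\di X}(\shI_{X|\PP^n_k}(l))$ still vanishes, which it does because the ACM range for $X$ goes up to $\di X$) and the identification $\shI_{D|X}\cong\odi{X}(-D)$, which uses smoothness of $X$ to guarantee that the Weil divisor $D$ is Cartier.
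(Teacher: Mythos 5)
Your proposal is correct and follows essentially the same route as the paper: the same ideal-sheaf sequence $0\arr\shI_{X|\PP^n_k}\arr\shI_{D|\PP^n_k}\arr\odi{X}(-D)\arr 0$, the same use of the ACM property of $X$ to kill the outer terms and obtain $\Hl^i(\shI_{D|\PP^n_k}(l))\cong\Hl^i(\odi{X}(-D)(l))$ in the range $1\leq i\leq\di X-1$, and the same observation that the locally Cohen-Macaulay condition is automatic because $\odi{X}(-D)$ is invertible on the smooth variety $X$.
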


\begin{proof}
Let's consider the exact sequence of $\odi{\PP_k^n}$-sheaves

$$
0\arr \shI_{X| \PP_k^n} \arr\shI_{D| \PP_k^n} \arr\odi{X}(-D) \arr 0.
$$	

If we tensor it with $\odi{\PP_k^n}(t)$ and take cohomology we get

$$
\Hl^i(\shI_{X| \PP_k^n}(t)) \arr \Hl^i(\shI_{D| \PP_k^n}(t)) \arr \Hl^i(\odi{X}(-D)(t)) \arr \Hl^{i+1}(\shI_{X| \PP_k^n}(t)).
$$	

Since $X$ is ACM, both extremes are zero for any $t$ and for $1\leq i\leq \di(X)-1$. Therefore we get isomorphisms

$$
\Hl^i(\shI_{D| \PP_k^n}(t)) \cong \Hl^i(\odi{X}(-D)(t))
$$
for any $t$ and for $1\leq i\leq \di D=\di(X)-1$. Since $\odi{X,x}(-D)\cong\odi{X,x}$ is Cohen-Macaulay for any $x\in X$, this turns out to be enough to conclude.

\end{proof}

\subsection{Del Pezzo surfaces}

In this paper we're going to be interested in ACM bundles on del Pezzo surfaces. This kind of surfaces were studied by P. del Pezzo in the nineteenth century and ever since its presence has been pervasive in Algebraic Geometry. Let's recall their definition and main properties:

\begin{dfn}(cfr. \cite[Chapter III, Definition 3.1]{Kol}).
A \emph{del Pezzo surface} is defined to be a smooth surface $X$ whose anticanonical divisor $-K_X$ is
ample. Its degree is defined as $K_X^2$.
\end{dfn}

\begin{rem}
It's possible to see that del Pezzo surfaces are rational. Indeed, according to Castelnuovo's criterion (cfr. \cite[Chapter III, Theorem 2.4]{Kol}), a smooth surface $X$ is rational if and only if $\h^0(\odi{X}(2K_X))=0$ and $\h^1(\odi{X})=0$. In the case of del Pezzo surfaces, the former cohomology group is zero because $-2K_X$ is ample and therefore clearly $2K_X$ is not effective. In characteristic zero the latter cohomology group is zero thanks to the Kodaira vanishing theorem. In characteristic positive, the vanishing still holds (cfr. \cite[Chapter III, Lemma 3.2.1]{Kol}).
\end{rem}

\begin{rem}[Serre duality for del Pezzo surfaces]
Let $X$ be a del Pezzo surface with very ample anticanonical divisor $H_X:=-K_X$. Given a locally free sheaf $\shE$ Serre duality takes the form:
$$
\Hl^i(X,\E)\cong \Hl^{2-i}(X,\shE^{\vee}(-H_X))'.
$$
This remark will be used without further mention throughout the paper.
\end{rem}

\begin{dfn}
Given a surface $X$, a curve $C$ on $X$ is called exceptional if $C\cong\PP_k^1$ and the self-intersection $C^2=-1$.
\end{dfn}

\begin{thm}(cfr. \cite[Chapter IV,Theorem 24.3]{Man}).
Let $X$ be a del Pezzo surface of degree $d$. Then every irreducible curve with a negative self-intersection number is exceptional.

\end{thm}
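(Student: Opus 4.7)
The statement to establish is that on a del Pezzo surface $X$, an irreducible curve $C$ with $C^2<0$ satisfies $C\cong\PP^1_k$ and $C^2=-1$. The plan is to combine the ampleness of $-K_X$ with the adjunction formula.

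First, because $-K_X$ is ample and $C$ is an irreducible (hence effective) curve, the intersection number $-K_X\cdot C$ is strictly positive, so $K_X\cdot C\leq -1$. By hypothesis we also have $C^2\leq -1$. Next, apply the adjunction formula for an integral curve on a smooth surface:
$$2p_a(C)-2 \;=\; C^2+K_X\cdot C.$$
The right-hand side is at most $-2$, so $p_a(C)\leq 0$. Since the arithmetic genus of an integral projective curve is non-negative (it equals $\dim H^1(C,\odi C)$), we conclude $p_a(C)=0$ and, more importantly, $C^2+K_X\cdot C=-2$. Because both summands are integers bounded above by $-1$, the only possibility is
$$C^2=-1,\qquad K_X\cdot C=-1.$$

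It remains to deduce that $C$ is actually isomorphic to $\PP^1_k$ (rather than merely having arithmetic genus zero). For an integral projective curve one has $p_a(C)=g(\widetilde C)+\sum_{p\in C}\delta_p$, where $\widetilde C\to C$ is the normalization and $\delta_p\geq 0$ measures the singularity at $p$. Vanishing of $p_a(C)$ therefore forces $\widetilde C\cong\PP^1_k$ and $\delta_p=0$ for every $p$, so $C$ is smooth and rational, i.e.\ $C\cong\PP^1_k$.

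The main subtlety — and the only step requiring some care — is the passage from $p_a(C)=0$ to smoothness of $C$; everything else is a direct application of adjunction and ampleness of $-K_X$. Once this is settled, both conditions in the definition of an exceptional curve are met, completing the proof.
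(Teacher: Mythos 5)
Your proof is correct: ampleness of $-K_X$ gives $K_X\cdot C\leq -1$, adjunction then forces $p_a(C)=0$ and $C^2=K_X\cdot C=-1$, and the $\delta$-invariant comparison $p_a(C)=g(\widetilde C)+\sum_p\delta_p$ correctly upgrades $p_a(C)=0$ to $C$ being smooth and rational. The paper itself gives no argument here --- it simply cites Manin --- and your adjunction-based reasoning is exactly the standard proof, in the same spirit as the ``direct computation from the adjunction formula'' the authors invoke for the neighbouring proposition, so nothing is missing.
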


\begin{dfn}
A set of points $\{p_1,\ldots ,p_r\}$ on $\PP_k^2$ with $r\leq 9$ are in \emph{general position} if no three of them lie on a line
and no six of them lie on a conic.
\end{dfn}

The following theorem characterizes all del Pezzo surfaces:

\begin{thm}(cfr. \cite[Chapter IV, Theorems 24.3 and 24.4]{Man}).
Let $X$ be a del Pezzo surface of degree $d$. Then $1\leq d\leq 9$ and
\begin{enumerate}
\item[(i)] If $d=9$, then $X$ is isomorphic to $\PP_k^2$ (and $-K_{\PP^2_k}=3H_{\PP^2_k}$ gives the usual Veronese embedding in $\PP_k^9$).
\item[(ii)] If $d=8$, then $X$ is isomorphic to either $\PP_k^1\times\PP_k^1$ or to a blow-up of $\PP_k^2$ at one point.
\item[(iii)] If $7\geq d\geq 1$, then $X$ is isomorphic to a blow-up of $9-d$ closed points in general position.
\end{enumerate}
Conversely, any surface described under $(i),(ii),(iii)$ for $d\geq 3$ is a del Pezzo surface of the corresponding degree.
\end{thm}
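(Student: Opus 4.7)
The plan is to first exhibit $X$ as a blow-up of a minimal rational surface, then identify which minimal models and configurations of centres can appear, and finally verify the converse via Nakai--Moishezon.

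\emph{Forward direction.} I would start from the observation (made in the preceding remark) that $X$ is rational, so iteratively contracting $(-1)$-curves of $X$ produces a minimal rational surface $Y$, which is either $\PP^2_k$ or a Hirzebruch surface $\mathbb{F}_n$ with $n\neq 1$. Ampleness of $-K_X$ together with adjunction $-K_X\cdot C=2-2p_a(C)+C^2$ forbids any irreducible curve $C\subseteq X$ with $C^2\leq -2$, a property preserved under successive $(-1)$-contractions (self-intersections of images can only increase). Since the negative section of $\mathbb{F}_n$, $n\geq 2$, has self-intersection $-n\leq -2$, the minimal model $Y$ must be $\PP^2_k$ or $\mathbb{F}_0=\PP^1_k\times\PP^1_k$. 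Using that $\mathbb{F}_0$ blown up at one point is isomorphic to $\PP^2_k$ blown up at two distinct points, one may assume $X$ is the blow-up of $\PP^2_k$ at $r$ closed points, except when $X\cong\mathbb{F}_0$ itself. The identity $K_X^2=9-r$ then yields $d=9-r$, so $d\leq 9$; the upper bound $r\leq 8$ (i.e.\ $d\geq 1$) follows because a ninth blow-up at any centre in general position would again produce a curve of self-intersection $\leq -2$.

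\emph{General position.} The same prohibition on curves of self-intersection $\leq -2$ dictates the remaining constraints. If three centres lay on a line, the strict transform would be a $(-2)$-curve; if six lay on a conic, likewise; if two centres coincided or were infinitely near, the first exceptional divisor would become a $(-2)$-curve after the second blow-up. Each case contradicts ampleness of $-K_X$, so general position is forced.

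\emph{Converse, $d\geq 3$.} For $r=9-d\leq 6$ points in general position, one must show that $-K_X$ is ample on the resulting blow-up $X$. I would apply Nakai--Moishezon: $(-K_X)^2=d>0$, and for every integral curve $C\subseteq X$ with $C\sim aH-\sum m_i E_i$ one needs $-K_X\cdot C=3a-\sum m_i>0$. Combining adjunction with the irreducibility and general-position constraints, the check reduces to a finite enumeration of low-degree cases (exceptional divisors, strict transforms of lines through $\leq 2$ centres, conics through $\leq 5$ centres, cubics through $\leq r$ centres singular at one, etc.), and a direct inequality verification in each case. \textbf{This case-by-case verification will be the main obstacle}, and the restriction $d\geq 3$ is essential: for $d\leq 2$ the naive general-position hypothesis admits configurations producing ampleness-violating curves, and additional conditions (e.g.\ no $r$ of the centres on a cubic singular at another centre when $r=7,8$) would have to be imposed.
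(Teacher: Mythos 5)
The paper itself gives no proof of this statement: it is quoted directly from \cite[Chapter IV, Theorems 24.3 and 24.4]{Man}, so your proposal can only be measured against the classical Manin--Demazure argument, and in outline you reproduce it correctly. Adjunction plus ampleness of $-K_X$ gives $C^2 \geq -1$ for every irreducible curve $C$; since $C^2 = (C')^2 - \sum m_i^2$ under blow-up, this property descends to any blow-down, which rules out $\mathbb{F}_n$, $n \geq 2$, as minimal model; the isomorphism of $\mathbb{F}_0$ blown up at one point with $\PP^2_k$ blown up at two points reduces everything to blow-ups of $\PP^2_k$; and your $(-2)$-curve mechanism correctly forces the centres to be distinct, not infinitely near, with no three collinear and no six on a conic. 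The converse via Nakai--Moishezon is likewise the standard route (it is how \cite{Dem} and \cite[Chapter V]{Harti} proceed).

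There are, however, two concrete misstatements and one unfilled step. First, your justification of $d \geq 1$ is false: blowing up a ninth very general point does \emph{not} produce a curve of self-intersection $\leq -2$; on such a surface the negative curves are the (now infinitely many) $(-1)$-curves, and ampleness fails instead because $K_X^2 = 0$ and the anticanonical elliptic curve $C \in |-K_X|$ satisfies $-K_X.C = 0$. The correct argument is immediate and you should replace yours with it: an ample divisor has positive self-intersection, so $d = K_X^2 > 0$ directly, whence $r \leq 8$. Second, your closing claim that for $d \leq 2$ the stated general-position hypothesis admits ampleness-violating configurations is wrong at $d = 2$: for $r = 7$ the conditions ``no three on a line, no six on a conic'' already suffice for $-K_X$ ample, since Manin's additional condition concerns eight points lying on a cubic singular at one of them and is vacuous for $r = 7$; the restriction $d \geq 3$ is genuinely unavoidable only at $d = 1$. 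Third, the Nakai--Moishezon verification that you flag as ``the main obstacle'' is left unexecuted, but for $r \leq 6$ it is not an obstacle requiring a long enumeration: for an irreducible curve $C = al - \sum b_i e_i$ distinct from the $e_i$ (so $a \geq 1$, $b_i \geq 0$), combine $p_a(C) \geq 0$, i.e. $\sum b_i(b_i - 1) \leq (a-1)(a-2)$, with the Cauchy--Schwarz bound $(\sum b_i)^2 \leq r \sum b_i^2 \leq 6 \sum b_i^2$: assuming $\sum b_i \geq 3a$ forces $3a^2 - 12 \leq 0$, impossible for $a \geq 3$, and the cases $a = 1,2$ follow at once from general position ($\sum b_i \leq 2$, resp.\ $\leq 5$). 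With these repairs your skeleton becomes a complete and essentially classical proof.
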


We're only going to deal with del Pezzo surfaces with very ample anticanonical sheaf. We are going to call them \emph{strong del Pezzo surfaces}. The following theorem characterize them:

\begin{thm}(cfr. \cite[Chapter IV, Theorem 24.5]{Man}).
If the surface $X$ is obtained from $\PP_k^2$ by blowing up $r\leq 6$ closed points in general position, then $-K_X$ is very ample and its global sections yield a closed embedding of $X$ in a projective space of dimension

$$ \di \Hl^0(X,\odi{X}(-K_X))-1=K_X^2=9-r.
$$

The set of exceptional curves is identified under this embedding with the set of lines in the projective space which lie on $X$.
The image of $X$ has degree $9-r$.
\end{thm}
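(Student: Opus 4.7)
The plan is to break the statement into four computational/geometric assertions and handle them in order: the dimension of $\Hl^0(X,\odi{X}(-K_X))$, the value of $K_X^2$, the very ampleness of $-K_X$, and the identifications regarding lines/degree. Throughout, write $\pi\colon X\to \PP_k^2$ for the blow-up at the $r$ points $p_1,\dots,p_r$, with exceptional divisors $E_1,\dots,E_r$, and $L=\pi^*\odi{\PP_k^2}(1)$. Then $-K_X=3L-E_1-\cdots-E_r$, and the intersection form is determined by $L^2=1$, $L\cdot E_i=0$, $E_i\cdot E_j=-\delta_{ij}$.

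For the dimension count, I would identify $\Hl^0(X,\odi{X}(-K_X))$ with the space of plane cubics through the $r$ points. Since $r\le 6\le 9$, the general-position hypothesis (no three on a line, no six on a conic) is exactly what is needed to show that the $r$ points impose independent conditions on cubics, so the dimension is $10-r$ and the projective dimension of the image of $X$ is $9-r$. For $K_X^2$, the blow-up formula $K_X=\pi^*K_{\PP_k^2}+E_1+\cdots+E_r$ together with the intersection numbers gives $K_X^2=9-r$ immediately, matching the dimension computation.

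The main obstacle is very ampleness of $-K_X$. My approach is to verify base-point freeness first and then separation of points and tangent vectors, using the planar model. Base-point freeness amounts to showing that for every $x\in X$ there is a cubic through $p_1,\dots,p_r$ missing $\pi(x)$ (or meeting the appropriate $E_i$ transversally if $x\in E_i$); this follows from general position by exhibiting explicit cubics built from lines and conics through subsets of the $p_i$. Separation of two distinct points and of tangent directions proceeds similarly: given $x,y$ on $X$, one must produce an anticanonical divisor through $x$ but not $y$, and the general-position hypothesis prevents the forced collinearity/conconicity that would obstruct this. The cleanest way to organize the case analysis is by where $x,y$ lie (on/off the exceptional locus), and in each case to write down enough independent cubics with prescribed incidence at the blown-up points.

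Finally, the identification of exceptional curves with lines follows from adjunction: if $C\subseteq X$ is exceptional then $C\cong\PP_k^1$ and $C^2=-1$, so $C\cdot K_X = 2g(C)-2-C^2 = -1$, hence $\de(C)=C\cdot(-K_X)=1$ and $C$ is embedded as a line. Conversely, a line $\ell\subseteq\PP_k^{9-r}$ contained in $X$ has $\ell\cdot(-K_X)=1$, and adjunction together with $\ell\cong\PP_k^1$ forces $\ell^2=-1$, so $\ell$ is exceptional. The degree of the image is $(-K_X)^2=K_X^2=9-r$, finishing the theorem. The serious work is concentrated in the very ampleness step; the rest is intersection theory on a blow-up and adjunction.
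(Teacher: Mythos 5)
The paper never proves this statement: it is quoted verbatim as a classical result with a pointer to Manin (Chapter IV, Theorem 24.5), so there is no internal proof to compare against. Your outline is the standard textbook argument found in the cited sources (Manin, Demazure's Expos\'e II, and Hartshorne V.4.6 for the cubic case $r=6$): identify $\Hl^0(X,\odi{X}(-K_X))$ with plane cubics through $p_1,\dots,p_r$, compute $K_X^2=9-r$ from the blow-up intersection form, get the line/exceptional-curve correspondence from adjunction ($C\cong\PP^1_k$, $C^2=-1$ forces $-K_X.C=1$, and conversely a line on $X$ has $\ell.(-K_X)=1$, hence $\ell^2=-1$), and read off the degree as $(-K_X)^2$. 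Those parts are correct, modulo one loose phrase: general position is \emph{sufficient} for the $r\leq 6$ points to impose independent conditions on cubics, but it is not ``exactly what is needed'' for that step (independence only fails when five of the points are collinear); the full strength of general position is what very ampleness requires, not the dimension count.

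The substantive issue is that the actual content of the theorem, very ampleness of $-K_X$, is left as a plan rather than an argument. Saying that one ``writes down enough independent cubics with prescribed incidence'' and that general position ``prevents the forced collinearity/conconicity'' names the strategy but does not execute it: the delicate cases are separating two points of $X$ lying on the same exceptional curve $E_i$ (equivalently, separating tangent directions at the blown-up point $p_i$, which requires cubics through the $p_j$ with a prescribed tangent line at $p_i$), separating tangent vectors at points of the $E_i$, and the extremal case $r=6$, where $\h^0(-K_X)=4$ so the anticanonical system is only a $3$-dimensional projective space and both general-position conditions are needed in an essential way. If you intend this as a complete proof rather than a reduction to the classical references, that case analysis must be carried out; as it stands it is a correct skeleton of the known proof with the hardest step asserted rather than proved.
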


\begin{cor}
Let $X$ be a strong del Pezzo surface. Then $X$ is isomorphic either to $\PP_k^1\times\PP_k^1$ or to the blow-up of $r$ points in general position on $\PP_k^2$ for $r=0,\ldots,6$.
\end{cor}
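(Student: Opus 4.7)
The plan is to combine the classification theorem with the strong del Pezzo theorem and handle the remaining cases by a direct dimension count. By the classification theorem, every del Pezzo surface is either $\PP_k^2$ (degree 9), $\PP_k^1 \times \PP_k^1$ or the blow-up of $\PP_k^2$ at one point (degree 8), or the blow-up of $r := 9 - d$ points in general position for $1 \leq d \leq 7$. The previous theorem already asserts that blow-ups of $r \leq 6$ general points are strong del Pezzo surfaces (including the case $r = 0$, that is, $\PP_k^2$ itself), so these cases are done.

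Next, for $X = \PP_k^1 \times \PP_k^1$ one has $-K_X = \odi{X}(2,2)$. I would observe that this is the external product of two very ample line bundles on $\PP_k^1$ and explicitly gives a closed embedding into $\PP_k^8$ (obtained, e.g., as the composition of the Segre embedding into $\PP_k^3$ followed by a suitable Veronese factor), so $\PP_k^1 \times \PP_k^1$ is a strong del Pezzo surface and must be included in the list.

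It remains to rule out blow-ups of $r = 7$ and $r = 8$ general points, that is, del Pezzo surfaces of degree $d = 2$ and $d = 1$. The key computation is that of $\h^0(X, -K_X)$: since $-K_X$ is ample, Kodaira vanishing (valid on del Pezzo surfaces in arbitrary characteristic, as recalled earlier) gives $\h^1(-K_X) = 0$, and Serre duality yields $\h^2(-K_X) = \h^0(2K_X) = 0$ because $2K_X$ is anti-ample and hence not effective. Riemann--Roch on a surface then gives $\h^0(-K_X) = 1 + K_X^2 = 1 + d$. For $d = 1$ this is $2$, too small to allow an embedding of a surface into any projective space. For $d = 2$ it is $3$, and a very ample $-K_X$ would realize $X$ as a surface in $\PP_k^2$, forcing $X \cong \PP_k^2$ and contradicting $K_X^2 = 2$. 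The main obstacle is precisely this exclusion for $r = 7, 8$, the remainder of the argument being a direct appeal to results already in the excerpt.
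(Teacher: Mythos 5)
The paper states this corollary without proof, as an immediate consequence of the preceding classification theorems, so there is no official argument to compare against; your proposal supplies the missing details, and its overall structure (classification of del Pezzo surfaces, very ampleness for $r\leq 6$ blow-ups and for $\PP^1_k\times\PP^1_k$ via $-K=\odi{X}(2,2)$, exclusion of degrees $1$ and $2$) is the right one. The positive cases are handled correctly.

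The one step that needs repair is the exclusion of degrees $1$ and $2$. You justify $\h^1(X,-K_X)=0$ by ``Kodaira vanishing, valid on del Pezzo surfaces in arbitrary characteristic, as recalled earlier,'' but the paper only recalled the vanishing of $\h^1(\odi{X})$ in positive characteristic; it did not recall any form of Kodaira vanishing, which in fact fails in general in characteristic $p>0$. The vanishing you need is true for del Pezzo surfaces in every characteristic, but it requires a different justification: for instance, realize $X$ as the blow-up $\pi$ of $9-d$ points in general position, so that $R\pi_*\odi{X}(-K_X)$ identifies $\h^i(-K_X)$ with the cohomology of the ideal sheaf of the points twisted by $\odi{\PP^2_k}(3)$, and check that $\leq 8$ points in general position impose independent conditions on cubics (or cite the corresponding statement in Koll\'ar's book). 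Alternatively, you can bypass the cohomological dimension count altogether: if $-K_X$ were very ample it would embed $X$ (via a possibly incomplete linear system) as an irreducible surface of degree $(-K_X)^2=d\in\{1,2\}$ in some projective space; a surface of degree $1$ is a plane and a smooth surface of degree $2$ is a smooth quadric, whose canonical self-intersections are $9$ and $8$ respectively, contradicting $K_X^2\leq 2$. With either fix the argument is complete.
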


In the next theorem we're going to recall the classical fact that del Pezzo surfaces fall in the class of ACM varieties (cfr. \cite[Expos\'{e} V, Th\'{e}or\`{e}me 1]{Dem}), but before let us recall an important definition that is going to be used through out the paper:
\begin{dfn}(cfr. \cite[Chapter I, Definition 1.1.4]{Mig}).
A coherent sheaf $\shE$ on $\PP_k^n $ is said to be \emph{$m$-regular} if $\Hl^i(\PP_k^n,\shE(m-i))=0$ for all $i>0$.
\end{dfn}

\begin{thm}(cfr. \cite[Expos\'{e} V, Th\'{e}or\`{e}me 1]{Dem})
Let $X$ be a strong del Pezzo surface of degree $d$ and let's consider its embedding in $\PP^d_k$ through the very ample divisor $-K_X$. Then $X\subseteq\PP^d_k$ is an ACM variety.
\end{thm}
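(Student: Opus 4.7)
My plan is to invoke Lemma \ref{ACMcharacterization}: since $\di X=2$, it suffices to show $\Hl^i_*(\PP_k^d,\shI_X)=0$ for $i=1,2$. Tensoring the sequence $0\to\shI_X\to\odi{\PP_k^d}\to\odi X\to 0$ by $\odi{\PP_k^d}(t)$ and using that $d\ge 3$ (by the Corollary above), so that $\Hl^i(\PP_k^d,\odi{\PP_k^d}(t))=0$ for $1\le i\le d-1$, the long exact sequence identifies
$$\Hl^2(\shI_X(t))\cong\Hl^1(X,-tK_X),\qquad \Hl^1(\shI_X(t))\cong\operatorname{coker}\phi_t,$$
where $\phi_t\colon\Hl^0(\PP_k^d,\odi{\PP_k^d}(t))\to\Hl^0(X,-tK_X)$ denotes the restriction. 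The theorem thus reduces to the two statements: (a) $\Hl^1(X,-tK_X)=0$ for every $t\in\Z$, and (b) $\phi_t$ is surjective for every $t\in\Z$.

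For (a) I would split into three cases. For $t=0$ this is $\h^1(\odi X)=0$, already recorded in the rationality discussion preceding the theorem. For $t\ge 1$, writing $-tK_X=K_X+(-(t+1)K_X)$ with $-(t+1)K_X$ ample and applying Kodaira vanishing (valid on del Pezzo surfaces in positive characteristic as well, as noted in the same remark) yields the claim. For $t\le -1$, Serre duality gives $\Hl^1(X,-tK_X)\cong\Hl^1(X,(t+1)K_X)^{\vee}$, which reduces to the previous cases.

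For (b) the cases $t<0$ are trivial because $\Hl^0(X,-tK_X)=0$ (the divisor $|t|K_X$ is not effective), the case $t=0$ is the identity $k\to k$, and the case $t=1$ is the tautology that the embedding is defined by the complete linear system $|-K_X|$. For $t\ge 2$ I would proceed by induction on $t$. Choose a smooth member $C\in|-K_X|$, which exists in any characteristic since $-K_X$ is very ample (so Bertini applies). By adjunction $\omega_C\cong\odi C$ and $C\cdot(-K_X)=d$, so $C$ is a smooth elliptic curve embedded in the hyperplane $\PP_k^{d-1}\subset\PP_k^d$ by a complete linear series of degree $d\ge 3$. The twisted restriction sequence $0\to\odi X(t-1)\to\odi X(t)\to\odi C(t)\to 0$ remains short exact on global sections thanks to (a), and fits into a ladder with the hyperplane sequence $0\to\odi{\PP_k^d}(t-1)\to\odi{\PP_k^d}(t)\to\odi{\PP_k^{d-1}}(t)\to 0$. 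The five lemma then reduces the surjectivity of $\phi_t$ to that of $\phi_{t-1}$ (the inductive hypothesis) together with the surjectivity of the restriction $\Hl^0(\PP_k^{d-1},\odi{\PP_k^{d-1}}(t))\to\Hl^0(C,\odi C(t))$, which is the classical projective normality of an elliptic curve embedded by a complete linear series of degree $\ge 3$.

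The most delicate ingredients are Kodaira vanishing in positive characteristic and the Bertini/normal-generation facts invoked for $C$; these are standard and are precisely the inputs used in the detailed proof of \cite[Expos\'e V, Th\'eor\`eme 1]{Dem}, to which I would refer the reader for the full computational argument.
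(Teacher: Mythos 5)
The one genuine gap is in your part (a) for $t\geq 1$. You justify $\Hl^1(X,-tK_X)=0$ by Kodaira vanishing, claiming it is ``valid on del Pezzo surfaces in positive characteristic as well, as noted in the same remark''; but the remark preceding the theorem only records the vanishing of $\h^1(\odi{X})$ in positive characteristic (citing Koll\'ar), not any form of Kodaira vanishing for ample twists. Since the paper works over an algebraically closed field of arbitrary characteristic, this step is not covered by anything quotable in the text: Kodaira vanishing does fail in positive characteristic in general (even on some smooth rational surfaces), so the del Pezzo hypothesis has to enter through an actual argument rather than a citation. The fix is already inside your own setup: restricting to the smooth elliptic curve $C\in|-K_X|$ you introduced for part (b), the sequence $0\arr\odi{X}(t-1)\arr\odi{X}(t)\arr\odi{C}(t)\arr 0$ together with $\h^1(\odi{C}(t))=\h^0(\odi{C}(-t))=0$ for $t\geq 1$ gives $\Hl^1(\odi{X}(t))=0$ for all $t\geq 0$ by induction from $\h^1(\odi{X})=0$, and Serre duality handles $t<0$. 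This is exactly how the paper proves (a), characteristic-free.

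Apart from that, your reduction through the ideal sequence is the paper's, while your treatment of $\Hl^1_*(\shI_X)$ is a genuinely different and workable route: you induct on $t$ through the ladder of restriction sequences and invoke projective normality of the elliptic normal curve $C\subseteq\PP^{d-1}_k$, whereas the paper observes that (a) together with $\h^2(\odi{X})=\h^0(\odi{X}(-1))=0$ makes $\odi{X}$ $2$-regular, hence $\shI_X$ $3$-regular, and then only $\Hl^1(\shI_X(1))=0$ (completeness of $|-K_X|$) needs a separate check. The regularity argument is shorter; yours displays projective normality degree by degree, at the price of the classical normal-generation input for elliptic curves. One small point you should make explicit in your version: $C$ is nondegenerate in the hyperplane, since $\h^0(\shI_{C|\PP^d_k}(1))\leq\h^0(\shI_{X|\PP^d_k}(1))+\h^0(\odi{X})=1$, so the restriction $\Hl^0(\odi{\PP^{d-1}_k}(1))\arr\Hl^0(\odi{C}(1))$ is an isomorphism and $C$ really is embedded by the complete degree-$d$ series, which is what the projective normality statement requires.
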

\begin{proof}
	We're going to prove that $\Hl^1_{*}(\odi{X})=0$ and $\Hl^1_{*}(\shI_X)=0$. Then the characterization from Lemma \ref{ACMcharacterization} and the short exact sequence definining the ideal of $X$ will allow us to conclude. Let's define $H:=-K_X$. Since $H^2 = d$ and $H$ is very ample, by the adjunction formula and by \cite[Chapter II, Theorem 8.18]{Harti} we obtain that $H$ is a smooth elliptic curve. In particular, since $K_H\sim 0$, from duality we obtain
	$$
	  \h^1(\odi{H}(m)) = \h^0(\odi{H}(-m)) = 0 \text{ for } m>0.
	$$
Since $X$ is rational, we can apply Castelnuovo's criterion to conclude that $\Hl^1(\odi{X})=0$. Next, from the exact sequence
    $$
	  0 \arr \odi{X}(-1) \arr \odi{X} \arr \odi{H} \arr 0
	$$
 twisting by $m\geq 1$ and taking cohomology
	$$
	  \Hl^1(\odi{X}(m-1)) \arr \Hl^1(\odi{X}(m)) \arr \Hl^1(\odi{H}(m)) = 0,
	$$
    we obtain that $\Hl^1(\odi{X}(m))=0$ for any $m\geq 0$. Since $\Hl^1(\odi{X}(m)) \cong \Hl^1(\odi{X}(-m-1))$, the vanishing holds for all $m$.

	It remains to prove that $\Hl^1_*(\shI_X) = 0$; let's consider the exact sequence
	$$
	  0 \arr \shI_X \arr \odi{\PP^d_k} \arr \odi{X} \arr 0.
	$$
	Since $\Hl^2(\odi{X}(2-2)) \cong \Hl^0(\odi{X}(-1))=0$, $\odi{X}$ is $2$-regular. Being $\odi{\PP^d_k}$ $3$-regular, we have that $\shI_X$ is $3$-regular and so $\Hl^1(\shI_X(m)) = 0$ for $m \geq 2$. Clearly this also holds for $m \leq 0$. Finally $\Hl^1(\shI_X(1)) = 0$ since $X$ is embedded through the complete linear system $|-K_X|$.
\end{proof}

Since we're going to accomplish some demonstrations by induction on the degree of the del Pezzo surfaces, the following result will reveal very useful:

\begin{rem}(cfr. \cite[Chapter IV, Corollary 24.5.2]{Man}).\label{blow-down}
	If $X$ is a strong del Pezzo surface and $\pi: X\rightarrow Y$ is a blow-down of a line, then $Y$ is a del Pezzo surface with $H_Y^2 = H_X^2 + 1$.
\end{rem}

To finish this section, let's state an important feature of the ACM bundles on del Pezzo surfaces:

\begin{rem}\label{ACM properties for the dual times canonical}
Let $X$ be a strong del Pezzo surface and $\shE$ be a bundle on it. Then $\E$ is ACM if and only if $\duale{\E}$ is ACM.
\end{rem}

\section{Geometry on strong del Pezzo surfaces}

We're going to work uniquely with strong del Pezzo surfaces, i.e., those del Pezzo surfaces with very ample anticanonical divisor $-K_X$. The goal of this section is to develop Bertini-like theorems for divisors on this kind of varieties. In order to achieve it firstly we will need a good understanding of the exceptional divisors of such varieties. Most of the results presented on this section should be well-known to the specialists but we gather them here for the reader's convenience.
\subsection{Intersection theory}
Let's start stressing a fact that had already been mentioned in the previous section:

\begin{prop}
Let $X$ be a del Pezzo surface and let $C$ be any irreducible smooth curve on $X$. The following conditions are equivalent:
\begin{enumerate}
\item[(i)] $C$ is an exceptional curve (i.e., $C^2=-1$ and $C\cong\PP_k^1$).
\item[(ii)] $C$ is a curve of arithmetic genus $0$ such that $C.K_X=-1$.
\item[(iii)] Let $i:X\hookrightarrow\PP_k^d$ be be the embedding given by the very ample anticanonical divisor $-K_X$. Then $i(C)\subseteq\PP_k^d$ is (an usual) line.
\end{enumerate}
\end{prop}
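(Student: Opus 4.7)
The proof is a direct application of two tools: the adjunction formula on the smooth surface $X$, and the fact that the degree of $i(C) \subseteq \PP_k^d$ with respect to the hyperplane class $H = -K_X$ equals the intersection number $C \cdot (-K_X)$. I would prove the chain (i)$\Rightarrow$(ii)$\Rightarrow$(iii)$\Rightarrow$(i).

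For (i)$\Rightarrow$(ii): since $C$ is smooth and irreducible, the arithmetic genus equals the geometric genus, and assuming $C \cong \PP_k^1$ gives $p_a(C) = 0$. Plugging $C^2 = -1$ into the adjunction formula $2p_a(C) - 2 = C^2 + C\cdot K_X$ immediately yields $C\cdot K_X = -1$.

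For (ii)$\Rightarrow$(iii): from $p_a(C) = 0$ and $C\cdot K_X = -1$, adjunction gives $C^2 = -1$. The degree of $i(C)$ as a subvariety of $\PP_k^d$ is
\[
\de i(C) = C \cdot H = C \cdot (-K_X) = 1,
\]
so $i(C)$ is an irreducible subvariety of $\PP_k^d$ of degree $1$, hence a linear subspace, and being a curve it must be a line.

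For (iii)$\Rightarrow$(i): if $i(C)$ is a line in $\PP_k^d$, then on the one hand $i(C) \cong \PP_k^1$, and since $i$ is a closed immersion this forces $C \cong \PP_k^1$. On the other hand, $1 = \de i(C) = C\cdot(-K_X)$, so $C\cdot K_X = -1$, and adjunction applied to the smooth rational curve $C$ gives $-2 = C^2 - 1$, i.e., $C^2 = -1$.

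There is no real obstacle here: the only point worth stressing is the standard fact that an irreducible projective subvariety of degree $1$ in $\PP_k^d$ is a linear subspace (used in (ii)$\Rightarrow$(iii)), which identifies the numerical condition $C\cdot(-K_X) = 1$ with the geometric statement that $i(C)$ is literally a line under the anticanonical embedding. Everything else is bookkeeping with adjunction and the very ample divisor $-K_X$.
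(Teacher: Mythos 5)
Your argument is correct and is exactly the route the paper intends: the paper's proof is the one-line remark that the equivalences follow by direct computation from the adjunction formula, which is what you carry out, together with the standard identification of degree-one irreducible curves in $\PP_k^d$ with lines. Nothing to add.
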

\begin{proof}
It's a direct computation from the adjunction formula.
\end{proof}

Therefore, since we're only going to deal with del Pezzo surfaces, we're going to use the following convention: we're going to call a curve $C$ in
$X$ an \emph{exceptional divisor} only when we will have fixed a blow-down morphism $\pi:X\rightarrow\PP_k^2$ such that $C$ corresponds to
the inverse image of one of the base points of $\pi$. On the other hand, any curve $C$ verifying the equivalents conditions of the previous proposition will be called a \emph{$(-1)$-line}.

In the following theorem we summarize the well-known results about the Picard group and the intersection product of blow-ups:
\begin{thm}(cfr. \cite[Chapter V, Prop. 4.8]{Harti}). \label{picblowups}
Let $\{p_1,\ldots ,p_r\}$  be a set of points in $\PP_k^2$ and let $\pi:X\rightarrow\PP_k^2$ be the blow-up of $\PP_k^2$ at
these points; let $l\in Pic(X)$ be the pull-back of a line in $\PP^2$, let $E_i$ be the exceptional curves (i.e., $\pi(E_i)=p_i$) and let $e_i\in Pic(X)$ be their linear equivalence classes. Then:

\begin{enumerate}
\item[(i)] $Pic(X)\cong\mathbb{Z}^{r+1}$, generated by $l,e_1,\ldots ,e_r$.
\item[(ii)] The intersection pairing on $X$ is given by $l^2=1$,$e_i^2=-1$, $l.e_i=0$ and $e_i.e_j=0$ for $i\neq j$.
\item[(iii)] The canonical class is $K_X=-3l+\sum_{i=1}^r e_i$.

\end{enumerate}
Moreover, if $0\leq r\leq 6$ and the points are in general position, the following holds:
\begin{enumerate}
\item[(iv)] The anticanonical divisor $H_X=-K_X$ is very ample.
\item[(v)] If $D$ is any effective divisor on $X$, $D\sim al-\sum b_ie_i$ then the degree of $D$ as a curve embedded in $\PP_k^{9-r}$ by $H_X$ is
$deg(D):=3a-\sum b_i$ and its self-intersection is $D^2=a^2-\sum b_i^2$.
\item[(vi)] The arithmetic genus of $D$ is

$$p_a(D)=\frac{1}{2}(D^2-deg(D))+1=\frac{1}{2}(a-1)(a-2)- \frac{1}{2}\sum b_i(b_i -1).
$$
\end{enumerate}

\end{thm}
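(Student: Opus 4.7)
The plan is to break this classical theorem into three clusters and handle each by a standard technique.

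For (i)--(iii), the strategy is induction on the number $r$ of points blown up. The base case $r=0$ is $\PP_k^2$ itself, where $\Pic(\PP_k^2)=\Z\cdot l$, $l^2=1$, and $K_{\PP_k^2}=-3l$ are classical. The inductive step relies on three standard facts about a blow-up $\sigma\colon X'\to X$ of a smooth surface at a smooth point with exceptional divisor $E'$: (a) $\Pic(X')=\sigma^*\Pic(X)\oplus\Z\cdot[E']$; (b) $\sigma^*$ preserves intersection numbers, $\sigma^*D\cdot[E']=0$, and $[E']^2=-1$; (c) $K_{X'}=\sigma^*K_X+E'$. Iterating these, and using that the $p_i$ are distinct so the exceptional divisors are disjoint, yields the stated generators, the intersection table, and the canonical class formula in one stroke. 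I would quote (a)--(c) from \cite[Chapter V, Section 3]{Harti} rather than re-derive them, since their proofs are local and already well documented.

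Part (iv) is precisely the statement already recalled from Manin in the preliminaries and needs no further argument. For (v), the equalities
\[
\deg(D)=D\cdot H_X=\bigl(al-\sum b_ie_i\bigr)\cdot\bigl(3l-\sum e_i\bigr)=3a-\sum b_i
\]
and $D^2=(al-\sum b_ie_i)^2=a^2-\sum b_i^2$ are one-line expansions using the intersection table from (ii) and $H_X=-K_X=3l-\sum e_i$ from (iii). Part (vi) then follows by adjunction: $2p_a(D)-2=D^2+D\cdot K_X=D^2-\deg(D)$, so $p_a(D)=\tfrac{1}{2}(D^2-\deg(D))+1$; substituting the formulas from (v) and collecting terms produces $\tfrac{1}{2}(a-1)(a-2)-\tfrac{1}{2}\sum b_i(b_i-1)$.

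The only substantive ingredient is the bundle of facts (a)--(c) on a single blow-up; once they are in hand, the entire theorem is arithmetic. Hence the cleanest course of action, as the author does, is to cite \cite[Chapter V, Proposition 4.8]{Harti} and (iv) from the earlier Manin theorem, using the four-line adjunction computation above to derive (v) and (vi) explicitly if needed.
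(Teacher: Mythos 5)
Your proposal is correct and coincides with the paper's treatment: the paper gives no proof of this theorem, simply citing \cite[Chapter V, Prop.\ 4.8]{Harti} (and Manin for very ampleness), and your sketch of (i)--(iii) via the standard single-blow-up facts together with the one-line intersection computations and adjunction for (v)--(vi) is exactly the classical argument behind that citation.
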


\begin{rem}(cfr. \cite[Chapter V, Remark 4.8.1]{Harti}). \label{bi}
Using the same notation as in the previous Theorem, if $C$ is any irreducible curve on $X$, other than the exceptional ones $E_i$, then $C_0:=\pi(C)$
is an irreducible plane curve and $C$ in turn is the strict transform of $C_0$. Let $C_0$ have degree $a$ and multiplicity $b_i$ at each $p_i$. Then $\pi^*C_0=C+\sum b_iE_i$. Since $C_0$ is linearly equivalent to $a$ times the class of a line on $\PP^2$, we get $C\sim al-\sum b_ie_i$ with $a> 0$ and $b_i\geq 0$.
\end{rem}

\begin{rem}[Riemann-Roch for divisors on a del Pezzo surface]
	Let $X$ be a del Pezzo surface. Since $X$ is an ACM and connected surface we have $\chi(\odi{X}) = 1$. In particular Riemann-Roch formula for a divisor $D$ has the form
	$$
	  \chi(D) = \frac{D(D+H)}{2} + 1.
	$$
\end{rem}

\begin{lemma}\label{Effectivity condition for ACM divisors}
	Let $X$ be a del Pezzo surface and $D$ be a divisor. If
	$$
	  D^2=D.H-2 \text{ and } D.H > 0
	$$
then $D$ is effective.
\end{lemma}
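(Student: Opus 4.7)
The plan is to combine the Riemann--Roch formula stated just above with Serre duality and the ampleness of $H$, to force $h^0(D) > 0$.

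First I would plug the numerical hypothesis $D^2 = D.H - 2$ into the Riemann--Roch formula for del Pezzo surfaces:
\[
\chi(D) \;=\; \frac{D(D+H)}{2} + 1 \;=\; \frac{(D.H - 2) + D.H}{2} + 1 \;=\; D.H,
\]
so $\chi(D) = D.H > 0$ by hypothesis.

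Next I would kill the top cohomology. By Serre duality one has $h^2(X, \odi{X}(D)) = h^0(X, \odi{X}(-H - D))$. If $-H-D$ were effective, intersecting with the ample class $H$ would give $(-H-D).H \geq 0$; but $(-H-D).H = -H^2 - D.H = -d - D.H < 0$ since $d = H^2 > 0$ and $D.H > 0$ by hypothesis. This contradiction forces $h^2(\odi{X}(D)) = 0$.

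Combining the two steps, $h^0(\odi{X}(D)) \geq \chi(D) = D.H > 0$, so $D$ is linearly equivalent to an effective divisor. There is no real obstacle here: the only small subtlety is the Serre duality calculation, but on a del Pezzo surface $\omega_X = \odi{X}(-H)$, so the identification of $h^2(D)$ with $h^0(-H-D)$ is immediate, and the rest is just arithmetic plus the positivity $H.C \geq 0$ for every effective $C$.
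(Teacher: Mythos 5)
Your proof is correct and follows essentially the same route as the paper: Riemann--Roch with the hypothesis $D^2=D.H-2$, plus vanishing of $h^2(D)=h^0(-D-H)$ forced by intersecting with the ample class $H$. The paper phrases it as a contradiction starting from $h^0(D)=0$, while you argue directly via $h^0(D)\geq\chi(D)=D.H>0$, but the content is identical.
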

\begin{proof}
	Suppose by contradiction that $\h^0(D) = 0$. We also have
	$$
	  (-D-H).H < 0 \then \h^2(D) = \h^0(-D-H) = 0.
	$$
	So we obtain the contradiction
	$$
	   D.H=D(D+H)/2 + 1 =\chi(D)=- \h^1(D)  \leq 0.
	$$
\end{proof}

The case of a del Pezzo surface which is the blow-up of one single point in $\PP^2_k$ deserves a special study. The notation of the following remark will be used through out the rest of the paper.
\begin{rem}(cfr. \cite[Chapter V, Proposition 2.3, Corollary 2.11]{Harti}).\label{x1}
	The blow-up of one single point in $\PP_k^2$ can also be interpreted as the rational ruled surface $\pi:X_1 = \PP(\odi{\PP^1_k} \oplus \odi{\PP^1_k}(-1)) \rightarrow \PP^1_k$; write $C_0$ and $f$ for a section and a fibre of $\pi$, respectively. Then $C_0,f$ form a basis of $\Pic X$ and the intersection theory on $X_1$ is given by the relation $C_0^2=-1$, $C_0.f = 1$ and $f^2=0$, while the canonical divisor is $K:=-2C_0 - 3f$. In particular $K^2 = 8$. So $C_0$ is a rational curve with $C_0^2=-1$ and $C_0.H = 1$. It's going to be seen in Proposition \ref{lines} that it is the unique $(-1)$-line on $X_1$. By Remark \ref{blow-down} the contraction of $C_0$ gives us a blow-down morphism $X_1\longrightarrow\PP^2_k$ for which $C_0=e_1$ is the exceptional divisor. Moreover
	$$
	  H_{X_1} = 2C_0 + 3f = 3l - e_1 \then f=l-e_1.
	$$
	Write $D=aC_0 + bf = bl-(b-a)e_1$ for a divisor on $X_1$. Then $D$ is effective if and only if $a=D.(l-e_1) = D.f\geq 0$ and $b=D.l=D.(f+C_0) \geq 0$. Clearly the inequalities imply that $D$ is effective. Conversely if $D$ is effective and $D.f = a < 0$, then a curve in $|D|$ contains all the curves in $|f|$, which is impossible since the union of these curves contains all the closed points of $X$. Finally if $D.l = b < 0$ then a curve in $|D|$ contains all the curves in $|l|$, which is impossible since the union of these curves contains all the closed points of $X - e_1$.
\end{rem}

In the following remark we deal with the quadric case and we introduce the notation that will be use through out the rest of paper.
\begin{rem}\label{effective divisor on quadric}
	Let $X=\PP^1_k \times \PP^1_k$ and $h,m$ be the usual basis of $\Pic X$. Then a divisor $D=ah+bm$ is effective if and only if it's generated by global sections if and only if $a,b \geq 0$. Clearly the inequalities imply that $D$ is effective and generated by global sections. Conversely, if for an effective divisor we had $D.m=a < 0$ that would mean that a curve in $|D|$ contains any curve of $|m|$, which is impossible since the union of these curves contains any closed point of $X$.
\end{rem}

\subsection{$(-1)$-lines on del Pezzo surfaces}

In order to have a good understanding of the properties of the del Pezzo surfaces it's important to keep track of the $(-1)$-lines present on them. This subsection collects some well-known results on their behavior. To start with, the following proposition determines the number of $(-1)$-lines:

\begin{prop}(cfr. \cite[Chapter V, Theorem 4.9]{Harti} and \cite[Expos\'e II, Table 3]{Dem}).\label{lines}
	$\phantom{}$
	\begin{enumerate}
		\item $\PP^1_k \times \PP^1_k$ and $\PP^2$ have no $(-1)$-lines.
		\item Let $X$ be a strong del Pezzo surface which is a blow-up of $r$ points of $\PP^2_k$ in general position, with $1 \leq r \leq 6$. The $(-1)$-lines of $X$ are
		\begin{itemize}
			\item the $r$ exceptional divisors $e_1,\dots,e_r$,
			\item for $r \geq 2$, $F_{i,j} = l - e_i - e_j$ with $1 \leq i < j \leq r$, which are $r(r-1)/2$,
			\item for $r=5$, $G=2l - e_1 - e_2 - e_3 - e_4 - e_5$,
			\item for $r=6$, $G_j = 2l - \sum_{i \neq j} e_i$, which are $6$.
		\end{itemize}
		So $X$ has exactly $r + \binom{r}{2} + \binom{r}{5}$ $(-1)$-lines.
	\end{enumerate}
\end{prop}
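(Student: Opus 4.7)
The plan is to translate the defining conditions of a $(-1)$-line --- an irreducible smooth rational curve with self-intersection $-1$, equivalently of arithmetic genus $0$ with $C.K_X = -1$ --- into a short Diophantine system in Picard coordinates, solve it, and then verify that each surviving class is actually represented by a $(-1)$-line. The two cases $\PP^2_k$ and $\PP^1_k \times \PP^1_k$ are immediate: in the former every effective divisor has class $dl$ with $d \geq 1$ and self-intersection $d^2 \geq 1$, and in the latter $(ah + bm)^2 = 2ab$ is always even, so neither surface admits a $(-1)$-line.

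For the blow-up case, let $\pi \colon X \to \PP^2_k$ contract $e_1,\dots,e_r$ with $1 \leq r \leq 6$. By Remark \ref{bi}, any irreducible curve on $X$ is either one of the $e_i$ (visibly a $(-1)$-line) or the strict transform of an integral plane curve, in which case its class is $al - \sum b_i e_i$ with $a \geq 1$ and $b_i \geq 0$. Using Theorem \ref{picblowups}, the $(-1)$-line conditions become
$$ 3a - \sum_{i=1}^r b_i = 1, \qquad a^2 - \sum_{i=1}^r b_i^2 = -1. $$
Applying Cauchy--Schwarz to $(b_1,\dots,b_r)$ and $(1,\dots,1)$ yields $(3a-1)^2 \leq r(a^2+1)$, which rearranges to $(9-r)a^2 - 6a + (1-r) \leq 0$. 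Since $9 - r \geq 3$, a direct analysis of the resulting quadratic inequality forces $a \leq 2$.

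Enumeration is now mechanical. For $a = 1$ we get $\sum b_i = \sum b_i^2 = 2$, so exactly two of the $b_i$ equal $1$ and the rest vanish; this gives the $\binom{r}{2}$ classes $F_{i,j} = l - e_i - e_j$, each realized as the strict transform of the unique line through $p_i$ and $p_j$, which is irreducible because no three of the blown-up points are collinear. For $a = 2$ we get $\sum b_i = \sum b_i^2 = 5$, and since $b_i^2 \geq b_i$ with equality iff $b_i \in \{0,1\}$, exactly five $b_i$'s equal $1$; this requires $r \geq 5$ and yields $G = 2l - \sum_{i=1}^5 e_i$ when $r = 5$, and the six classes $G_j = 2l - \sum_{i \neq j} e_i$ when $r = 6$. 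Each such class is realized as the strict transform of the unique conic through the five relevant points, which is smooth and irreducible by the general position hypothesis. Summing gives the stated total $r + \binom{r}{2} + \binom{r}{5}$. The only nontrivial step is the Cauchy--Schwarz estimate bounding $a$; both the enumeration and the realizability checks are then routine general-position arguments.
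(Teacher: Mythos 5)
Your argument is correct, and it is worth noting that the paper itself gives no proof of this proposition at all: it simply cites Hartshorne (Chapter V, Theorem 4.9) and Demazure's table. What you have written is essentially a self-contained version of the standard enumeration found in those references: reduce to the strict-transform classes $al-\sum b_ie_i$ with $a\geq 1$, $b_i\geq 0$ via Remark \ref{bi}, impose $3a-\sum b_i=1$ and $a^2-\sum b_i^2=-1$, bound $a$ by Cauchy--Schwarz, enumerate, and realize each class by a line through two of the points or a conic through five of them, using general position for irreducibility and for the correctness of the multiplicities. All of these steps check out ($a\leq 2$ indeed follows for every $r\leq 6$, and the $a=2$ solutions automatically force $r\geq 5$). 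The only point you leave implicit is why the final count is \emph{exactly} $r+\binom{r}{2}+\binom{r}{5}$, i.e.\ why each listed class contains a unique $(-1)$-line: if $C$ and $C'$ were distinct irreducible curves in the same class one would have $C.C'\geq 0$, contradicting $C.C'=C^2=-1$. With that one-line remark added, your proof is complete and fills in a verification that the paper delegates entirely to the literature.
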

	
	


\begin{prop}(cfr. \cite[Chapter V, Proposition 4.10]{Harti}).
Let $X$ be a del Pezzo surface of degree $d$ and set $r=9-d$. If $L_1, \dots, L_r$ are mutually disjoint $(-1)$-lines of $X$ then there exists a blow-up $\pi : X \arr \PP^2_k$ of $r$ points in general position such that $L_1,\dots,L_r$ are the exceptional divisors.
\end{prop}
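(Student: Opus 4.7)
The plan is to proceed by induction on $r$, contracting the $(-1)$-lines one at a time via Castelnuovo's contraction theorem until we reach a del Pezzo surface of degree $9$, which must be $\PP_k^2$ by the classification.

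For $r=0$ the claim is vacuous. Assume $r\geq 1$ and that the result holds for $r-1$ mutually disjoint $(-1)$-lines on a del Pezzo surface of degree $d+1$. Castelnuovo's theorem applied to $L_1$ produces a blow-down $\pi_1:X\arr X_1$ where $X_1$ is smooth, $\pi_1(L_1)=\{p_1\}$, and $\pi_1$ is an isomorphism outside $L_1$. By Remark \ref{blow-down}, $X_1$ is a del Pezzo surface of degree $d+1$.

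The key step is to verify that $M_i:=\pi_1(L_i)$ for $i=2,\dots,r$ are mutually disjoint $(-1)$-lines on $X_1$, and that $p_1$ lies on none of them. Because $L_1\cap L_i=\emptyset$ by hypothesis, $\pi_1$ restricts to an isomorphism on a neighbourhood of each $L_i$; hence $M_i\cong\PP_k^1$, $M_i^2=L_i^2=-1$, and $M_i.M_j=L_i.L_j=0$ for $i\neq j$. Moreover, $p_1\in M_i$ would force some point of $L_i$ to lie in $\pi_1^{-1}(p_1)=L_1$, contradicting disjointness. The inductive hypothesis applied to $X_1$ with $M_2,\dots,M_r$ then yields a blow-up $\sigma:X_1\arr\PP_k^2$ at points $q_2,\dots,q_r$ in general position whose exceptional divisors are the $M_i$. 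Since $p_1$ avoids every $M_i$, its image $q_1:=\sigma(p_1)$ is a point of $\PP_k^2$ distinct from $q_2,\dots,q_r$, and $\pi:=\sigma\circ\pi_1$ exhibits $X$ as the blow-up of $\PP_k^2$ at $\{q_1,\dots,q_r\}$ with exceptional divisors $L_1,\dots,L_r$.

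It remains to check that $q_1,\dots,q_r$ are in general position. If three of them lay on a line $\ell\subseteq\PP_k^2$, the strict transform of $\ell$ in $X$ would be an irreducible curve with self-intersection $1-3=-2$; similarly six points on a conic would give a strict transform with self-intersection $4-6=-2$. Each possibility contradicts the theorem that on a del Pezzo surface every irreducible curve with negative self-intersection is a $(-1)$-line. The main obstacle is the bookkeeping at the inductive step—confirming that the $M_i$ remain disjoint $(-1)$-lines and that $p_1$ misses them, so the base points really descend to distinct points of $\PP_k^2$—after which Castelnuovo's theorem together with the numerical characterisation of $(-1)$-curves on del Pezzo surfaces does the rest.
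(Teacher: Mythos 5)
Your argument is correct and takes essentially the same route as the paper: contract the $r$ disjoint $(-1)$-lines (you spell out the inductive bookkeeping that the paper compresses into one appeal to Remark \ref{blow-down}) to reach a degree-$9$ del Pezzo surface, hence $\PP^2_k$, and then exclude collinear triples and coconic sextuples by examining strict transforms. The only divergence is the final numerical contradiction---you use $D^2\leq -2$ together with the theorem that every irreducible curve of negative self-intersection on a del Pezzo surface is a $(-1)$-line, whereas the paper computes $-K_X.D\leq 0$ for the effective class $D$ and contradicts ampleness of $-K_X$---and note that your version tacitly assumes the conic is irreducible; a reducible conic through six points contains three collinear points, so your line case covers that situation (the paper's effectivity argument needs no such remark).
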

\begin{proof}
Let $\pi : X \arr Y$ be the blow-down of $L_1, \dots, L_r$. According to Remark \ref{blow-down}, $Y$ is a del Pezzo surface of degree $d+r=9$ and so $Y \cong \PP^2_k$. Following Theorem \ref{picblowups}, if we put $e_i:=L_i$ we know that $K_X=-3l+\sum e_i$. We want now to prove that the points $\{p_1, \dots, p_r\}$ of $\PP^2_k$ image under $\pi$ of $L_1,\dots,L_r$ are in general position, i.e. that no three of them are collinear and no six of them lie on a conic. This can be done as in \cite[Expos\'e II, Th\'eor\`eme 1]{Dem}:
if $p_1,\ldots p_s$ lay on a line, for $s\geq 3$ then its strict transform $D:=l-e_1-\ldots e_s$ would be an effective divisor and $-K_X.D\leq 0$ would contradict the fact that we're supposing that $X$ is a del Pezzo surface and in particular $-K_X$ is very ample.
Analogously, if $p_1,\ldots p_6$ lay on a conic, $D:=2l-e_1-\ldots e_6$ would be an effective divisor such that $-K_X.D\leq 0$, a contradiction.
\end{proof}

    \begin{cor}\label{make two lines exceptional divisors}
    	Let $X$ be a del Pezzo surface of degree $d$ and $L, L'$ be skew $(-1)$-lines of $X$. If $r:=9-d\geq 4$, then $L, L'$ are exceptional divisors for some blow-up $X \arrdi{\pi} \PP^2_k$ of $r$ points in general position.
    \end{cor}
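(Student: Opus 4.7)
My plan is to extend $\{L,L'\}$ to a set of $r$ mutually disjoint $(-1)$-lines on $X$ and then invoke the previous proposition. Since $L\cap L'=\emptyset$, the blow-down $\pi_1\colon X\arr Y_1$ of $L$ maps $L'$ isomorphically onto a $(-1)$-line $L'_1\subset Y_1$; by Remark \ref{blow-down}, $Y_1$ is a del Pezzo of degree $d+1\leq 6$, and hence still a strong del Pezzo by the classification. Applying the same remark to $Y_1$, the contraction $\pi_2\colon Y_1\arr Y_2$ of $L'_1$ produces a del Pezzo surface $Y_2$ of degree $d+2\leq 7$. Since del Pezzo surfaces of degree at most $7$ are precisely the blow-ups of $\PP^2_k$ at $\leq 6$ points in general position, we may fix a blow-down $\sigma\colon Y_2\arr\PP^2_k$ with exceptional divisors $F_3,\ldots,F_r$.

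The heart of the proof is the claim that neither of the contracted points $p_2:=\pi_2(L'_1)$ and $q:=\pi_2(\pi_1(L))$ lies on any $(-1)$-line of $Y_2$. If $p_2$ lay on some $(-1)$-line $C\subset Y_2$, its strict transform in $Y_1$ would be an irreducible curve of self-intersection $-2$, contradicting the theorem recalled in the preliminaries that every irreducible curve of negative self-intersection on a del Pezzo surface is a $(-1)$-line. Similarly, if $q\in C$ for some $(-1)$-line $C\subset Y_2$, the previous case forces $p_2\notin C$, so $C$ lifts to a $(-1)$-line of $Y_1$ through $\pi_1(L)$, and then its further strict transform to $X$ would be a $(-2)$-curve, again impossible.

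Granted this claim, no $F_i$ passes through $p_2$ or $q$, so the successive strict transforms $\tilde F_3,\ldots,\tilde F_r$ in $X$ are $(-1)$-lines, pairwise disjoint and disjoint from both $L$ and $L'$. The previous proposition applied to $\{L,L',\tilde F_3,\ldots,\tilde F_r\}$ then yields the desired blow-down $X\arr\PP^2_k$ of $r$ points in general position having $L$ and $L'$ among its exceptional divisors. The main obstacle I anticipate is the technical claim about $(-1)$-lines avoiding the contracted points: this is exactly what guarantees that the composite $X\arr Y_1\arr Y_2\arr\PP^2_k$ is genuinely the blow-up of $\PP^2_k$ at $r$ distinct points.
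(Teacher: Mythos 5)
Your proof is correct, but it follows a genuinely different route from the paper's. The paper extends $\{L,L'\}$ to a set of $r$ mutually skew $(-1)$-lines by a direct combinatorial check: using the explicit list of $(-1)$-lines from Proposition \ref{lines}, it exhibits, up to permutation of the exceptional divisors and separately for $r=4,5,6$, concrete sets of skew lines containing the given pair, and then invokes the preceding proposition. You instead produce the remaining $r-2$ skew lines intrinsically: contract $L$ and then $L'$ (using Remark \ref{blow-down} twice), observe that $Y_2$ is a del Pezzo surface of degree $d+2\leq 7$, hence itself a blow-up of $\PP^2_k$ at $r-2$ points in general position, and lift its exceptional divisors back to $X$. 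The crucial point, which you identify and prove correctly, is that no $(-1)$-line of $Y_2$ passes through either contracted point, since otherwise its strict transform in $Y_1$ or in $X$ would be an irreducible curve of self-intersection $-2$, contradicting the fact (recalled in the preliminaries) that on a del Pezzo surface every irreducible curve of negative self-intersection is a $(-1)$-line; this guarantees the lifted curves stay $(-1)$-lines, pairwise disjoint and disjoint from $L$ and $L'$, so the preceding proposition applies exactly as in the paper. What each approach buys: yours is uniform in $r$ and avoids any appeal to the classification of the $(-1)$-lines beyond the structure theory of del Pezzo surfaces, so it is more conceptual and would adapt to situations where listing lines is unpleasant; the paper's check is shorter once Proposition \ref{lines} is available and has the virtue of being completely explicit about which lines complete the pair.
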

    \begin{proof}
	Since we're supposing that there exist two skew $(-1)$-lines, we know that $X$ is the blow-up of $r$ points in general position on $\PP_k^2$. Therefore it's enough to show that $L, L'$ are contained in a set of $r$ mutually skew $(-1)$-lines of $X$. If $L$ and $L'$ are already part of the exceptional divisors of the blow-up morphism that we're considering, we're done. If it's not the case, with regard to the notation of Proposition \ref{lines}, up to permutation of the exceptional divisors, it's straightforward to check that they form part of one of the following sets of skew $(-1)$-lines:
	\begin{align*}
         &F_{1,2}, F_{1,3}, F_{2,3}, e_4 && \text{if $r = 4$,}\\
		 &F_{1,2}, F_{1,3}, F_{1,4}, F_{1,5}, G ; \ \ \ \ F_{1,2}, F_{1,3}, F_{2,3}, e_4, e_5 && \text{if $r = 5$,}\\
		 &F_{1,2}, F_{1,3}, F_{1,4}, F_{1,5}, G_6, e_6 ; \ \ \ \ F_{1,2}, F_{1,3}, F_{2,3}, e_4, e_5 , e_6& &\text{if $r = 6$.}
	\end{align*}
    \end{proof}

\subsection{Very ample and smooth divisors}
In this subsection we give criterions in terms of the intersection with $(-1)$-lines for a linear system to be very ample or at least to contain smooth representatives.
    \begin{lemma}(cfr. \cite[Chapter V, Lemma 4.12]{Harti}).\label{alternative base for Pic X}
	Let $X$ be a del Pezzo surface which is a blow-up of $r$ points of $\PP^2_k$ in general position, for $2 \leq r \leq 6$, and let's consider the divisors $D_0,\ldots, D_r$ defined as follows:
	\begin{align*}
		D_0 & = l, \\
		D_1 & = l-e_1, \\
		D_2 & = 2l - e_1 - e_2, \\
		D_3 & = 2l - e_1 - e_2 - e_3, \\
		D_4 & = 2l - e_1 - e_2 - e_3 - e_4, \\
		D_5 & = 3l - e_1 - e_2 - e_3 - e_4 - e_5, \\
		D_6 & = 3l - e_1 - e_2 - e_3 - e_4 - e_5 - e_6.
	\end{align*}
	Then $D_0,\dots,D_r$ are effective divisors without base points in $X$ and form a basis of $\Pic X$. If $D=al - \sum_i b_ie_i$ is any divisor in $X$ then
	$$
		  D = \alpha D_0 + \sum_{i=1}^{r-1}(b_i - b_{i+1})D_i + b_rD_r
	$$
	where
	$$
	 \alpha = \left\lbrace \begin{array}{lc}
	              	a -b_1 - b_2 = D.F_{1,2} & \text{if } 2 \leq r \leq 4 \\
			a -b_1 - b_2 -b_5= D.F_{1,2}-D.e_5 & \text{if } 5 \leq r \leq 6.
	              \end{array}
		      \right.
	$$
    \end{lemma}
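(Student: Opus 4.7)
My plan is to split the statement into its three independent assertions and handle them in the following order: (a) verify the explicit expansion of an arbitrary divisor $D=al-\sum b_ie_i$ in terms of $D_0,\dots,D_r$; (b) deduce the basis property from (a); (c) establish effectivity and base-point freeness of each $D_i$ separately.

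For (a), I would simply substitute the definitions of $D_0,\dots,D_r$ into the proposed expression and compare coefficients of $l$ and of each $-e_i$ with those of $D$. The coefficient of $-e_i$ telescopes to $(b_i-b_{i+1})+(b_{i+1}-b_{i+2})+\dots+(b_{r-1}-b_r)+b_r=b_i$. The coefficient of $l$ collects a contribution of $1$ from $D_0$ and $D_1$ and a contribution of $2$ from $D_2,D_3,D_4$ (resp.\ $3$ from $D_5,D_6$). A quick bookkeeping gives $\alpha+b_1+b_2=a$ for $2\leq r\leq 4$ and $\alpha+b_1+b_2+b_5=a$ for $r=5,6$. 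Using Theorem \ref{picblowups}(ii) one recognizes $a-b_1-b_2=D.F_{1,2}$ and $b_5=D.e_5$, yielding the two cases of the formula for $\alpha$.

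Part (b) is then immediate: the formula of (a) exhibits every element of $\Pic X$ as a $\Z$-linear combination of $D_0,\dots,D_r$, and since $\Pic X\cong\Z^{r+1}$ has rank $r+1$, these $r+1$ generators are forced to be a $\Z$-basis.

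For (c), I would identify each $D_i$ with the strict transform of a classical linear system on $\PP^2_k$: $D_0$ is the pullback of $|\odi{\PP^2_k}(1)|$, $D_1$ the pencil of lines through $p_1$, $D_i$ for $2\leq i\leq 4$ the system of conics through $p_1,\dots,p_i$, and $D_5,D_6$ the systems of cubics through $p_1,\dots,p_i$. Since $\h^0(\odi{\PP^2_k}(1))=3$, $\h^0(\odi{\PP^2_k}(2))=6$ and $\h^0(\odi{\PP^2_k}(3))=10$, and since general position of the $p_i$ guarantees that the incidence conditions are independent, each of these systems is nonempty, and so each $D_i$ is effective. The delicate point -- and in my view the only real obstacle -- is base-point freeness on $X$: away from the $e_j$ the argument is immediate from dimension counting of the corresponding plane system, but on an exceptional divisor $e_j$ one must show that no point of $e_j$ is common to all strict transforms. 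I would settle this by noting that a generic member of the plane system is smooth at $p_j$, and that its tangent direction at $p_j$ (which is exactly the point where its strict transform meets $e_j$) sweeps out all of $\PP^1_k\cong e_j$ as we vary the curve within a suitable sub-pencil. This can be verified case by case for lines, conics through up to $4$ and cubics through $5$ or $6$ general points, each time exhibiting explicitly a sub-pencil whose tangents at $p_j$ fill $\PP^1_k$.
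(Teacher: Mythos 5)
Your proposal is correct, and parts (a) and (b) coincide with the paper's proof: the paper also obtains the expansion by "a direct computation", and its basis argument (the change-of-basis matrix from $l,e_1,\dots,e_r$ has determinant $\pm 1$) is interchangeable with your observation that a surjective endomorphism of $\Z^{r+1}$ is an isomorphism. The genuine difference is in part (c): the paper disposes of effectivity and base-point freeness in one line by citing \cite[Chapter V, Propositions 4.1 and 4.3]{Harti}, whereas you sketch the underlying classical argument yourself --- identifying $|D_i|$ with the system of lines, conics, or cubics through the blown-up points, and handling points of the exceptional curves via sub-pencils whose tangent directions at $p_j$ sweep out $e_j\cong\PP^1_k$. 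This is essentially a re-proof of the cited Hartshorne propositions; it buys self-containedness at the cost of a case-by-case verification that you leave at sketch level (in particular, "immediate from dimension counting" for points away from the $e_j$ still requires exhibiting, for each $q\neq p_i$, a member of the plane system avoiding $q$, e.g. by suitable unions of lines and conics), while the paper's citation is shorter but outsources exactly this work. Both routes are sound.
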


\begin{proof}
	$D_0,\dots,D_r$ form a base of $\Pic X$ because they are the image of the base $l,e_1,\dots,e_r$ with respect to an invertible matrix with determinant $\pm 1$. They are without base points thanks to \cite[Chapter V, Proposition 4.1 and Proposition 4.3]{Harti}. A direct computation provides the last equality.
\end{proof}

\begin{cor}\label{decomposition of a divisor with respect to lines}
With the same notation and hypothesis of Lemma \ref{alternative base for Pic X}, for any divisor $D$ of $X$ there's a choice of exceptional divisors in $X$ such that
	$$
		  D = \alpha_0 D_0 + \sum_{i=1}^{r-1}\alpha_i D_i + (D.e_r)D_r
	$$
	where $\alpha_1,\dots,\alpha_{r-1} \geq 0$ and
	$$
	 \alpha_0 \text{ is } \left\lbrace \begin{array}{lc}
	              	= D.F_{1,2} & \text{if } 2 \leq r \leq 4 \\
			\geq 0 & \text{if } 5 \leq r \leq 6.
	              \end{array}
		      \right.
	$$
\end{cor}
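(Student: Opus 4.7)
The plan is to use a minimization argument over the finitely many blow-downs $X\to\PP^2_k$, each corresponding (by the proposition preceding Corollary \ref{make two lines exceptional divisors}) to a choice of $r$ mutually skew $(-1)$-lines on $X$. By Proposition \ref{lines} only finitely many such lines exist, so only finitely many blow-downs need be compared.

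First, for any blow-down I would write $D=al-\sum b_ie_i$ and permute the $e_i$ so that $b_1\geq b_2\geq\dots\geq b_r$. By Lemma \ref{alternative base for Pic X} this immediately gives $\alpha_i=b_i-b_{i+1}\geq 0$ for $1\leq i\leq r-1$, and settles the case $2\leq r\leq 4$, in which no sign is required of $\alpha_0=D.F_{1,2}$.

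For $r\in\{5,6\}$ I would pick, among all (blow-down, sorted labeling) pairs, one minimizing the integer $a=D.l$; such a minimum exists by the finiteness above. The claim is that in this minimizer the remaining inequality $\alpha_0=a-b_1-b_2-b_5\geq 0$ holds automatically. To see it, suppose $\alpha_0<0$ and consider the Cremona-type change of basis
\[
l'=2l-e_1-e_2-e_5,\quad e_1'=l-e_2-e_5,\quad e_2'=l-e_1-e_5,\quad e_5'=l-e_1-e_2,
\]
leaving the remaining $e_i$ fixed. A direct intersection computation using $e_i.e_j=-\delta_{ij}$ and $l.e_i=0$ confirms that the new classes $e_1',\dots,e_r'$ are again mutually skew $(-1)$-lines, hence the exceptional divisors of a genuine blow-down of $X$. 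Expanding $D$ in the primed basis then yields $a'=2a-b_1-b_2-b_5$, and the hypothesis $\alpha_0<0$ reads exactly $a<b_1+b_2+b_5$, forcing $a'<a$; since re-sorting the primed $b_i$'s leaves $a'$ unchanged, this contradicts minimality.

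The main obstacle will be singling out exactly this Cremona-type change of basis---driven by the indices $1,2,5$ appearing in the formula for $\alpha_0$---and verifying that the new classes form the exceptional divisors of a genuine blow-down of $X$, as opposed to merely a Picard-lattice automorphism. Once that is in place, the strict decrease $a'<a$ falls out at once from the explicit formula.
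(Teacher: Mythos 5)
Your proposal is correct, but it follows a genuinely different route from the paper. The paper handles $5\leq r\leq 6$ by choosing a $(-1)$-line $L$ minimizing $D.L$ over all lines, then a line $L'$ minimizing $D.L''$ over the lines skew to $L$, invoking Corollary \ref{make two lines exceptional divisors} to realize $L=e_r$, $L'=e_{r-1}$ as exceptional divisors of a blow-up of points in general position, and sorting the rest; the inequality $\alpha_0=D.F_{1,2}-D.e_5\geq 0$ then drops out immediately because $F_{1,2}$ is a $(-1)$-line disjoint from $e_r$ (so its intersection with $D$ is at least that of $e_5=L$ when $r=5$, resp. of $e_5=L'$ when $r=6$). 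You instead minimize $a=D.l$ over the finitely many (blow-down, sorted labeling) pairs and exclude $\alpha_0<0$ by the quadratic Cremona change of basis centred at $e_1,e_2,e_5$ — a Noether-style descent. Both work. The paper's argument is shorter but leans on Corollary \ref{make two lines exceptional divisors}, whose proof hides a case-by-case analysis of configurations of skew lines; yours bypasses that corollary entirely, needing only the proposition that $r$ mutually disjoint $(-1)$-lines are the exceptional divisors of some blow-down, the fact that $F_{2,5},F_{1,5},F_{1,2}$ are genuine $(-1)$-lines (they are in the list of Proposition \ref{lines}, so the intersection computation alone need not carry the burden of effectivity), and one small verification you leave implicit: that $l'=2l-e_1-e_2-e_5$ is the pullback of a line for the \emph{new} blow-down, so that $a'=D.l'$ really is the quantity being minimized. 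This follows in one line from $3l'-\sum_i e_i'=-K_X$ and the torsion-freeness of $\Pic X$, so it is a cosmetic omission rather than a gap; with it included, the strict decrease $a'=a+\alpha_0<a$ contradicts minimality exactly as you say.
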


    \begin{proof}
      If $2 \leq r \leq 4$ it's enough to relabel the given exceptional divisors so that
      $$
      D.e_1 \geq \cdots \geq D.e_r.
      $$
      If $5 \leq r \leq 6$, we can proceed in this way. Choose a line $L$ such that
      $$
      D.L = \min \{ D.L' \ | \ L' \text{ line of } X \}.
      $$
      Note that no $(-1)$-line of $X$ meets all the other $(-1)$-lines of $X$ and so we can choose a second $(-1)$-line $L'$ such that
      $$
      D.L' = \min \{ D.L'' \ | \ L'' \text{ line of } X \text{ such that } L'.L'' = 0 \}.
      $$
      From Corollary \ref{make two lines exceptional divisors} we can assume that $L, L'$ are exceptional divisors, namely $e_r=L$ and $e_{r-1} = L'$. As above we can relabel $e_1, \dots , e_{r-2}$ so that $D.e_1 \geq \cdots \geq D.e_r$. Finally, since $F_{1,2}.e_r = 0$, we have
      $$
	\alpha_0 = D.F_{1,2} - D.e_5 \geq 0.
      $$
    \end{proof}

The next lemma gives a nice criterion in order to know when a divisor is very ample:

\begin{lemma}(cfr. \cite[Chapter V, Theorem 4.11]{Harti}).\label{veryample}
	Let $X$ be a del Pezzo surface which is a blow-up of $r$ points in general position of $\PP^2_k$ and $D$ be a divisor on $X$. If $2 \leq r \leq 6$, $D$ is very ample if and only if $D.L > 0$ for any $(-1)$-line $L$ on $X$. If $r=1$ then $D$ is very ample if and only if $D.e_1, D.(l-e_1) > 0$.
\end{lemma}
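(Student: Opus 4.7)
The plan is as follows. The ``only if'' direction is immediate: a very ample divisor $D$ gives an embedding of $X$ into projective space, so $D.C>0$ for every irreducible curve $C\subseteq X$; I apply this to each $(-1)$-line and, when $r=1$, also to the fibre class $l-e_1$ of the $\PP^1_k$-bundle $X_1\to\PP^1_k$, which is represented by an irreducible curve. For the converse I will exploit the decomposition furnished by Corollary \ref{decomposition of a divisor with respect to lines}, together with the elementary fact that the sum of a very ample divisor and a base-point-free divisor is again very ample.

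Suppose first $2\leq r\leq 6$. By Corollary \ref{decomposition of a divisor with respect to lines}, after relabelling the exceptional divisors one can write
\[
D \;=\; \alpha_0 D_0 + \alpha_1 D_1 + \cdots + \alpha_{r-1} D_{r-1} + (D.e_r)\,D_r
\]
with $\alpha_1,\ldots,\alpha_{r-1}\geq 0$ and $\alpha_0\geq 0$. By hypothesis $D.e_r\geq 1$; moreover, when $r\leq 4$ we have $\alpha_0 = D.F_{1,2}\geq 1$ because $F_{1,2}$ is a $(-1)$-line. A direct computation in $\Pic X$ shows that $-K_X = D_0 + D_r$ for $r=2,3,4$ and $-K_X = D_r$ for $r=5,6$, so subtracting $-K_X$ from the decomposition leaves all remaining coefficients non-negative. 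Hence $D+K_X$ is a non-negative linear combination of the base-point-free divisors $D_0,\ldots,D_r$ and is therefore itself base-point-free. Writing $D = (-K_X) + (D+K_X)$ with $-K_X$ very ample then yields the very ampleness of $D$.

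The case $r=1$ does not fit this decomposition scheme and requires a separate argument. Writing $D=al-be_1$, the hypothesis becomes $b\geq 1$ and $a-b\geq 1$, and I would argue by induction on $a+b$. The base case is $D=2l-e_1$, which is very ample because $|2l-e_1|$ gives the classical embedding of $X_1\cong\mathbb{F}_1$ as a smooth cubic scroll in $\PP^4_k$. For the inductive step, if $D\neq 2l-e_1$ then either $D-l$ or $D-(l-e_1)$ still satisfies the hypothesis of the lemma, so by induction it is very ample, and then $D$ is very ample by adding back the base-point-free divisor $l$ or $l-e_1$. The main technical hurdle is precisely this base case: the ``extract $-K_{X_1}$'' trick fails when $a-b=1$, since $-K_{X_1}=2l+(l-e_1)$ would force the coefficient of $l$ to become negative; the induction circumvents this, at the price of relying on the classical fact that $|2l-e_1|$ realises $\mathbb{F}_1$ as the cubic scroll in $\PP^4_k$.
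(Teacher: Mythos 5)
Your proof is correct. For $2\leq r\leq 6$ it is essentially the paper's own argument: the same decomposition from Corollary \ref{decomposition of a divisor with respect to lines}, the observation that $D_0+D_r=H_X$ for $r\leq 4$ and $D_r=H_X$ for $r=5,6$, and the fact that a very ample divisor plus a globally generated one is very ample; your phrasing ``subtract $-K_X$ and check the remaining coefficients are non-negative'' is the same computation. (Your ``only if'' via positivity of degrees of irreducible curves is also the same content as the paper's appeal to Nakai--Moishezon.) The genuine difference is the case $r=1$: the paper disposes of it in one line by quoting Hartshorne's very-ampleness criterion for rational ruled surfaces (\cite[Chapter V, Corollary 2.18]{Harti}), since the hypotheses $D.e_1=b-a>0$, $D.f=a>0$ for $D=aC_0+bf$ are exactly that criterion with $e=1$, whereas you run an induction on $a+b$ whose base case is the classical embedding of $X_1\cong\mathbb{F}_1$ by $|2l-e_1|$ as the cubic scroll in $\PP^4_k$, and whose inductive step adds back the base-point-free classes $l$ or $l-e_1=f$. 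Your step is sound (if $a-b\geq 2$ use $D-l$, if $b\geq 2$ use $D-(l-e_1)$, and the only divisor in the admissible region with neither reduction available is $2l-e_1$), so the argument works; the trade-off is that the paper's citation is shorter and more uniform, while your route is more self-contained in structure but still rests on an unproved classical input (the cubic scroll), which is itself usually established either by the very criterion the paper cites or by identifying $|2l-e_1|$ with the conics through the blown-up point. Either way the lemma is proved.
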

\begin{proof}
	From the Nakai-Moishezon criterion the inequalities hold if $D$ is very ample. So we focus on the converse. If $r=1$ and $D=aC_0 + bf$ the two conditions say that $D.C_0 = D.e_1 = b -a > 0$ and $D.(l-e_1)=D.f = a > 0$ and the result follows from \cite[Chapter V, Corollary 2.18.]{Harti}.  If $2 \leq r \leq 6$, according to Corollary \ref{decomposition of a divisor with respect to lines}, we can write
	$$
	  D = \alpha_0 D_0 + \cdots \alpha_r D_r  \text{ with } \alpha_1, \dots, \alpha_{r-1} \geq 0 \text{ and } \alpha_r = D.e_r > 0;
	$$
	if $2 \leq r \leq 4$, we have that  $\alpha_0 = D.F_{1,2} > 0$ and therefore, since $D_r + D_0 = H_X$, $D$ is $H_X$ plus a sum of divisors generated by global sections. On the other hand, if $5 \leq r \leq 6$ then $\alpha_0 \geq 0$ and $D_r = H_X$. In any case, since a very ample divisor plus a divisor generated by global sections is very ample, we are done.
\end{proof}

\begin{rem}
	If $X=X_1$ is the blow-up of one point of $\PP^2_k$, then for a divisor $D=aC_0 + bf$ the condition $D.e_1 = b-a > 0$ ($e_1=C_0$ is the unique line of $X$) is not enough for ampleness. For example $D=-C_0 + f$ is not effective, while $D=f$ is effective but $D.f=f^2 = 0$ and so it's not ample.
\end{rem}

\begin{thm}\label{smoothness criterion}
	Let $X$ be a del Pezzo surface and $D$ be a non zero effective divisor. Then $D.L \geq 0$ for any $(-1)$-line $L$ of $X$ if and only if the linear system $|D|$ contains an open non-empty subset of smooth curves with no $(-1)$-lines as irreducible components. Such a divisor is always generated by global sections.
\end{thm}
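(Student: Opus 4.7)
The plan is to treat the two implications separately. For the forward direction I would argue directly: if $D' \in |D|$ is a smooth curve with no $(-1)$-line as an irreducible component, then for any $(-1)$-line $L$ the curve $L$ meets $D'$ properly, and hence $D.L = D'.L \geq 0$.

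For the converse the strategy will be first to promote the hypothesis $D.L \geq 0$ to the statement that $D$ is generated by global sections, and then use Bertini together with a linear-subspace avoidance argument to extract a smooth member with no $(-1)$-line as a component. I would establish global generation case by case along the classification of strong del Pezzo surfaces. If $X = \PP^2_k$ or $X = \PP^1_k \times \PP^1_k$ there are no $(-1)$-lines and the conclusion is immediate (from the $\PP^2_k$ picture and Remark \ref{effective divisor on quadric}). If $X = X_1$, the unique $(-1)$-line is $C_0 = e_1$; writing $D = aC_0 + bf$ with $a, b \geq 0$ and using $D.C_0 = b - a \geq 0$, I would rewrite $D = a\, l + (b-a)\, f$ as a nonnegative combination of the base-point-free divisors $D_0 = l$ and $D_1 = f$. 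If $X$ is the blow-up of $r \geq 2$ points in general position, I would apply Corollary \ref{decomposition of a divisor with respect to lines} to obtain $D = \alpha_0 D_0 + \sum_{i=1}^{r-1}\alpha_i D_i + (D.e_r) D_r$ with $\alpha_1,\dots,\alpha_{r-1} \geq 0$ from the corollary, $\alpha_r = D.e_r \geq 0$ from the hypothesis, and $\alpha_0 \geq 0$ either because $F_{1,2}$ is a $(-1)$-line (when $r \leq 4$) or directly from the corollary (when $r = 5,6$). Since each $D_i$ is base-point-free by Lemma \ref{alternative base for Pic X}, so is $D$.

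Once base-point-freeness is in hand, I would invoke Bertini to produce a non-empty open $U \subseteq |D|$ of smooth members. To force a general member to contain no $(-1)$-line as an irreducible component, I would observe that for each of the finitely many $(-1)$-lines $L$ on $X$ (Proposition \ref{lines}) the locus $\{D' \in |D| : L \subseteq D'\}$ is a proper linear subspace of $|D|$: otherwise $L$ would lie in the base locus, contradicting base-point-freeness. The intersection of $U$ with the open complement of these finitely many proper subspaces would then be the required non-empty open subset.

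The hard part will be the promotion from the intersection hypothesis to base-point-freeness in the $r \geq 2$ case: it hinges on choosing the labelling of exceptional divisors in Corollary \ref{decomposition of a divisor with respect to lines} so that the hypothesis $D.L \geq 0$ for every $(-1)$-line $L$ of $X$ translates into nonnegativity of every coefficient in the decomposition. The remaining steps, once this is secured, are standard applications of Bertini and of the irreducibility of the ambient linear system.
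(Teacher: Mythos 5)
Your forward direction and your promotion of the hypothesis to global generation are exactly the paper's: the same case split ($\PP^2_k$, the quadric, $X_1$, and $r\geq 2$ via Corollary \ref{decomposition of a divisor with respect to lines}, using $D.e_r\geq 0$ and $D.F_{1,2}\geq 0$ to make all coefficients nonnegative). The divergence, and the genuine gap, is in the smoothness step. You conclude by applying Bertini to a linear system that is merely base-point-free. The paper works over an algebraically closed field of \emph{arbitrary characteristic}, and in positive characteristic Bertini's smoothness theorem is only available for very ample systems (general hyperplane sections); for a base-point-free system whose associated morphism is not an embedding --- which is precisely the situation when $D.L=0$ for some $(-1)$-line $L$, since $\phi_{|D|}$ contracts $L$ --- the general member of a base-point-free system need not be smooth in characteristic $p$ (quasi-elliptic pencils on rational surfaces in characteristic $2,3$ are the standard warning). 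So the step ``base-point-free $\then$ general member smooth'' is exactly the point that cannot be taken for granted here, and it is why the paper does not argue as you do.

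What the paper does instead is a descending induction on the degree of $X$: if $D.L>0$ for every $(-1)$-line, then $D$ is very ample by Lemma \ref{veryample} and Bertini applies in any characteristic; if $D.L=0$ for some line $L$, it uses global generation to pick $C\in|D|$ disjoint from $L$, blows down $L$ to a del Pezzo $Y$ of one higher degree, checks that $D'=\pi C$ again meets every $(-1)$-line of $Y$ nonnegatively, applies the inductive hypothesis on $Y$, and pulls back the smooth members of $|D'|$ missing $\pi(L)$; the base cases $\PP^2_k$, $\PP^1_k\times\PP^1_k$ and $X_1$ are handled explicitly (e.g.\ $|al|$ on $X_1$ via plane curves avoiding the blown-up point, $|bf|$ as disjoint fibres). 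If you intend your argument only in characteristic zero it is correct, but then you should say so; in the paper's generality you need this reduction (or some substitute for Bertini). A small further remark: your final avoidance of the linear subspaces $\{D'\in|D| : L\subseteq D'\}$ is harmless but unnecessary --- a smooth member $C$ with $C.L\geq 0$ cannot contain a $(-1)$-line $L$ at all, since writing $C=L+M$, smoothness forces $L\cap M=\emptyset$ and hence $C.L=L^2=-1<0$; this is the observation the paper uses.
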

\begin{proof}
	$\Longleftarrow )$ If $D$ is smooth and $D.L < 0$ for some $(-1)$-line $L$ then $L$ is in the base locus of $|D|$ and therefore $L$ is an irreducible component of any element of this linear system.
	
	$\then)$ Clearly if we prove that $|D|$ contains an open subset of smooth curves $C$, the same argument used above shows that $C$ doesn't contain a line.
	
	The cases $X=\PP^1_k \times \PP^1_k$ and $X=\PP_k^2$ don't contain any $(-1)$-line and therefore we would need to prove that the linear class of any non-zero effective divisor contains an open non-empty set of smooth curves and it's generated by global sections, which is a very well-known fact.

	$X = X_1$, i.e. $X$ is the blow-up of a point in $\PP^2_k$. Write $D=aC_0 + bf$, with $a,b \geq 0$. We have $D.C_0 = b-a \geq 0$. If $a=0$, then $D=bf$ is a disjoint union of $b$ distinct fibers of the usual projections $X_1 \arr \PP^1_k$ and $f$ is generated by global sections. If $a = D.f > 0$ and $D.C_0 > 0$ we know that $D$ is very ample and therefore we can apply Bertini's theorem to get the conclusion. It remains the case $b=a > 0$, i.e. $D=a(C_0 + f) = al$. Since $l$ is generated by global sections so is $D$. Finally $|al|$ contains the inverse image of the open non-empty set of curves of (usual) degree $a$ in $\PP^2_k$ which don't contain the point blown up.
	
	In order to treat the remaining cases, we proceed by descent induction on the degree of $X$ and therefore we can suppose that $X$ is a blow-up of $r$ points of $\PP^2_k$, with $2 \leq r \leq 6$. If $D.L > 0$ for any $(-1)$-line of $X$ we know that $D$ is very ample (see Lemma \ref{veryample}) and therefore by Bertini's theorem we get the conclusion. So suppose that $D.L = 0$ for some $(-1)$-line $L$ of $X$. From Corollary \ref{decomposition of a divisor with respect to lines} we see that $D$ is generated by global sections. So we can take $C \in |D|$ such that $C \cap L = \emptyset$, the blow-down $X \arrdi{\pi} Y$ with respect to $L$ and consider the divisor $D':=\pi C$ on $Y$. Thanks to Remark \ref{blow-down}, we know that $Y$ is a del Pezzo surface of degree $H_Y^2=H_X^2+1$.

    If it was the case that there exists a $(-1)$-line $L'$ on $Y$ such that $D'.L' < 0$ then $L' \subseteq D'$ and in particular $L'$ doesn't contain the point $\pi(L)$. Therefore $\pi^*L'$ is a $(-1)$-line of $X$ and $\pi^*L'.D = \pi^*L'.\pi^*D' = L'.D' < 0$, which is a contradiction. Therefore $D'.L' \geq 0$, for any $(-1)$-line $L'$ in $Y$ and we can apply the hypothesis of induction to $Y$ to get an open non-empty subset $U$ of the linear system $|D'|$ composed of smooth curves with no $(-1)$-lines as irreducible components. Since $|D'|$ is generated by global sections, the point $\pi(L)$ is not a fixed point of this linear system and therefore we can suppose, restricting the open set if necessary, that no curve of $U$ passes  through $\pi(L)$. Then $\pi^{*}$ gives us the open non-empty set of smooth curves without $(-1)$-lines as components on $D$.

\end{proof}

\section{Classification of ACM line bundles}

In this section $X$ will be a strong del Pezzo surface of degree $d=3,\ldots ,9$ embedded in $\PP_k^d$ by the very ample divisor $-K_X$. In particular, when we will speak of ACM bundles on $X$ it will always be with respect to this divisor. We follow notation from Theorem \ref{picblowups}.

\subsection{Geometrical characterization of ACM line bundles}
The goal of this subsection will be characterize numerically ACM line bundles on del Pezzo surfaces. Moreover we're going to show that they correspond to rational normal curves on the surface.

\begin{rem}
	Let $D$ be a non zero effective divisor on a del Pezzo surface $X$ and consider the exact sequence
	    $$
	      0 \arr \odi{X}(-D) \arr \odi{X} \arr \odi{D} \arr 0.
	    $$
	    Since $\h^0(\odi{X}(-D)) = \h^1(\odi{X}) = \h^2(\odi{X}) = 0$, taking cohomology we obtain the two exact sequences
	    \begin{align*}
	    	0 \arr \Hl^0(\odi{X}) \arr \Hl^0(\odi{D}) \arr \Hl^1(\odi{X}(-D)) \arr 0, \\
		\end{align*}
        and
        \begin{align*}
             0 \arr \Hl^1(\odi{D}) \arr \Hl^2(\odi{X}(-D)) \arr 0,
	    \end{align*}
	    and therefore the equalities
	    \begin{align}\label{first equalities for an effective divisor}
	    	\h^0(\odi{D}) = 1 + \h^1(\odi{X}(-D)), \ \ \ \ \h^1(\odi{D})=\h^2(\odi{X}(-D)).
	    \end{align}
\end{rem}

\begin{prop}\label{degeneration}
Let $X\subseteq\PP_k^d$ be a del Pezzo surface of degree $d$ and let $\odi{X}(D)$ be an initialized line bundle on $X$ with $D$ a rational smooth curve of degree $c$. Then $D$ is a non-degenerate curve on some $\PP_k^m$ for $m=c-\h^1(\odi{X}(D-2H))$.
\end{prop}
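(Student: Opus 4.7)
The plan is to read off the dimension $m$ of the linear span of $D$ in $\PP_k^d$ as one less than the dimension of the image of the restriction map $H^0(\PP_k^d, \OO_{\PP_k^d}(1)) \arr H^0(D, \OO_D(H))$. Since $D$ is by definition non-degenerate in its linear span, the problem reduces to computing that image dimension.

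First, I would factor the restriction through $X$ as
$$
H^0(\PP_k^d, \OO_{\PP_k^d}(1)) \arr H^0(X, \OO_X(H)) \arr H^0(D, \OO_D(H)).
$$
Because $X \subseteq \PP_k^d$ is ACM (established at the end of Section 2), we have $H^1_*(\shI_X) = 0$, i.e. $X$ is projectively normal, so the first arrow is surjective. Consequently it suffices to compute the image of the second arrow.

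The second step is to feed the standard short exact sequence
$$
0 \arr \OO_X(H - D) \arr \OO_X(H) \arr \OO_D(H) \arr 0,
$$
obtained by twisting $0 \arr \OO_X(-D) \arr \OO_X \arr \OO_D \arr 0$ by $H$, into cohomology. Using that $H^1(X, \OO_X(H)) = 0$ (also a consequence of the ACM property of $X$, as noted in the proof in Section 2), the resulting long exact sequence gives
$$
\dim \operatorname{Im}\bigl(H^0(\OO_X(H)) \arr H^0(\OO_D(H))\bigr) = \h^0(\OO_D(H)) - \h^1(\OO_X(H - D)).
$$
Since $D \cong \PP_k^1$ is embedded with degree $c$, one has $\OO_D(H) \cong \OO_{\PP_k^1}(c)$ and hence $\h^0(\OO_D(H)) = c + 1$, giving $m + 1 = c + 1 - \h^1(\OO_X(H - D))$, i.e. $m = c - \h^1(\OO_X(H - D))$.

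To finish, I would invoke Serre duality on the del Pezzo surface $X$ with $\omega_X = \OO_X(-H)$, which turns $\h^1(\OO_X(H - D))$ into $\h^1(\OO_X(D - 2H))$, yielding exactly the formula in the statement. I do not foresee a real obstacle: every vanishing used is either part of the ACM data for $X \subseteq \PP_k^d$ already at our disposal or a direct rational-curve computation.
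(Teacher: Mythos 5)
Your argument is correct, and it reaches the formula by a slightly different route than the paper. The paper counts hyperplanes containing $D$: it uses the sequence $0 \to \shI_{X|\PP^d_k} \to \shI_{D|\PP^d_k} \to \odi{X}(-D) \to 0$ together with the ACM property of $X$ to identify $\h^0(\shI_{D|\PP^d_k}(1))$ with $\h^0(\odi{X}(H-D))$, and then evaluates the latter by Riemann--Roch, invoking the initialized hypothesis to kill $\h^2(H-D)=\h^0(D-2H)$. You instead compute the span as the image of the restricted hyperplane series, via $0 \to \odi{X}(H-D) \to \odi{X}(H) \to \odi{D}(H) \to 0$, the projective normality of $X$ (surjectivity of $\Hl^0(\odi{\PP^d_k}(1)) \to \Hl^0(\odi{X}(H))$, i.e. $\Hl^1(\shI_X(1))=0$), the vanishing $\Hl^1(\odi{X}(H))=0$, and $\h^0(\odi{D}(H))=c+1$ for the rational curve $D$; the final Serre duality step is common to both. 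The two computations are equivalent --- indeed the tail of your long exact sequence, using $\Hl^1(\odi{D}(H))=0$ and $\Hl^2(\odi{X}(H))=0$, shows that $\h^2(\odi{X}(H-D))=\h^0(\odi{X}(D-2H))$ vanishes automatically --- but your version avoids Riemann--Roch and, notably, never uses the hypothesis that $\odi{X}(D)$ is initialized, whereas in the paper that hypothesis is exactly what supplies the vanishing just mentioned. So your proof is a valid, marginally more economical variant of the same underlying cohomological bookkeeping.
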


\begin{proof}
The statement is reduced to the computation of the dimension of $\Hl^0(\shI_{D|\PP_k^d}(1))$ and it's performed as follows: let's consider the exact sequence
$$
0\arr \shI_{X| \PP_k^d} \arr\shI_{D| \PP_k^d} \arr\shI_{D|X} \arr 0.
$$
Since $X$ was non degenerate and ACM, applying the functor of global sections to the previous sequence twisted by $\odi{\PP_k^d}(1)$ we get that
$$
\Hl^0(\shI_{D|\PP_k^d}(1))\cong \Hl^0(\shI_{D|X}(1))=\Hl^0(\odi{X}(-D+H)).
$$
On the other hand, by Riemann-Roch,
$$
\chi(-D+H)=\frac{1}{2}(-D+H)(-D+2H)+1=d-c,
$$
using the fact that $D$ is smooth and rational. Since $\odi{X}(D)$ was initialized, $\h^2(-D+H)=\h^0(D-2H)=0$ and we can conclude that
$$
\h^0(-D+H)=d-c+\h^1(-D+H).
$$

\end{proof}

\begin{dfn}
	A rational normal curve of degree $d$ is a non-degenerate rational smooth curve of degree $d$ in some $\PP_k^d$.
\end{dfn}

\begin{rem}\label{fact on rational normal curves}
	If $D$ is a rational normal curve of degree $d$ then
	$$
	  \Hl^0(\odi{\PP^d_k}(1)) \arr \Hl^0(\odi{D}(1)) \cong \Hl^0(\odi{\PP^1_k}(d))
	$$
	is injective and therefore an isomorphism. This means that $D$ is embedded through the complete linear system $|\odi{\PP^1_k}(d)|$ and so, up to automorphism of $\PP^d_k$, is unique. A classical result, which can be found in \cite[Corollary 6.2]{EisSyz}, is that $D$ is ACM in $\PP^d_k$.
\end{rem}

    \begin{thm}\label{maintheorem}
	    Let $X\subseteq\PP^d_k$ be a del Pezzo surface of degree $d$ embedded through the very ample divisor $-K_X$ and $\shL$ be a line bundle on $X$.
	    They are equivalent:
	    \begin{enumerate}
	    \item $\shL$ is initialized and ACM.
		\item $\shL$ is initialized and $\h^1(\shL(-1))=\h^1(\shL(-2))=0$.
		\item $\shL \cong \odi{X}$ or $\shL \cong \odi{X}(D)$, where $D$ is a divisor such that $D^2=D.H-2$ and $0<D.H \leq H^2$.
		\item $\shL \cong \odi{X}$ or $\shL \cong \odi{X}(D)$, where $D$ is a rational normal curve on $X$ with $\deg D \leq d$.
	    \end{enumerate}
    \end{thm}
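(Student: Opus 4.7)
The plan is to establish the cyclic implications $(1) \Rightarrow (2) \Rightarrow (3) \Rightarrow (4) \Rightarrow (1)$. The first is immediate since $\dim X = 2$ forces the ACM condition to coincide with $\h^1_*(\shL)=0$. For $(2) \Rightarrow (3)$ the case $\shL\cong\odi{X}$ is trivial, so assume $\shL\cong\odi{X}(D)$ with $D$ effective non-zero (which follows from initializedness together with $\shL\not\cong\odi{X}$); in particular $D.H>0$ by ampleness of $H$. Riemann--Roch yields $\chi(\shL(-1))=\tfrac{1}{2}(D^2-D.H)+1$, while Serre duality combined with effectivity gives $\h^2(\shL(-1))=\h^0(\odi{X}(-D))=0$; together with initializedness $\h^0(\shL(-1))=0$ and the hypothesis $\h^1(\shL(-1))=0$, this forces $D^2=D.H-2$. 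A parallel computation yields $\chi(\shL(-2))=d-D.H$, and since $\h^0(\shL(-2))$ and $\h^2(\shL(-2))$ are both non-negative while $\h^1(\shL(-2))=0$, one concludes $D.H \leq d$.

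The step $(3) \Rightarrow (4)$ is the geometric heart of the proof and the main obstacle. Assuming the numerical conditions, Lemma \ref{Effectivity condition for ACM divisors} lets us take $D$ effective, and the genus formula gives $p_a(D)=\tfrac{1}{2}(D^2-D.H)+1=0$. I would aim to produce a smooth rational irreducible member of $|D|$, which by Theorem \ref{smoothness criterion} reduces to verifying $D.L \geq 0$ for every $(-1)$-line $L$. The strategy is: if $D.L<0$, then $L$ is a fixed component, so $D=kL+D''$ with $k\geq 1$ maximal and $L$ not a component of $D''$; then $L.D''=D.L+k\geq 0$, and combining the additivity $p_a(A+B)=p_a(A)+p_a(B)+A.B-1$ with the constraint $p_a(D)=0$ and with the connectedness of $D$ (which follows from $\h^0(\odi{D})=1+\h^1(\odi{X}(-D))=1$, the latter vanishing via Serre duality from $\h^0(\shL(-1))=\h^2(\shL(-1))=0$) either leads to a contradiction or forces $D=L$, which is itself a rational normal curve of degree one. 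Once $D.L\geq 0$ holds for every $(-1)$-line, Theorem \ref{smoothness criterion} provides a smooth irreducible $C \in |D|$, necessarily rational since $p_a(C)=p_a(D)=0$, of degree $c=D.H$. Finally, the non-degeneracy of $C$ in $\PP^c_k\subseteq\PP^d_k$ follows from Proposition \ref{degeneration} after verifying $\h^1(\shL(-2))=0$; this last vanishing is checked by computing $\chi(\shL(-2))=d-D.H$, identifying $\h^2(\shL(-2))=\h^0(\odi{X}(H-D))$ via Serre duality, and noting that $H-D$ is effective of the correct dimension by Lemma \ref{Effectivity condition for ACM divisors} when $D.H<d$, while $H-D$ is non-effective when $D.H=d$.

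Finally, for $(4) \Rightarrow (1)$: a rational normal curve of degree $c$ is ACM in its linear span $\PP^c_k$ by Remark \ref{fact on rational normal curves}, and it remains ACM in $\PP^d_k$ because adjoining unused variables does not affect Cohen--Macaulayness of the coordinate ring. Lemma \ref{ACMvar} then gives that $\odi{X}(-D)$ is an ACM sheaf, and Remark \ref{ACM properties for the dual times canonical} transfers this to $\shL=\odi{X}(D)$. Initializedness follows because $\h^0(\shL(-1))=0$: this is immediate from $(D-H).H<0$ when $c<d$, and in the boundary case $c=d$ from the fact that $D\not\sim H$ (otherwise $p_a(D)=p_a(H)=1\neq 0$) combined with $(D-H).H=0$ and ampleness of $H$. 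The crux of the entire argument is the verification $D.L \geq 0$ within the $(3)\Rightarrow(4)$ step, which rests on a delicate balance of intersection numbers, arithmetic genus additivity, and scheme-theoretic connectedness.
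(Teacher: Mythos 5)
Your overall architecture coincides with the paper's (the same cycle of implications, effectivity via Lemma \ref{Effectivity condition for ACM divisors}, smoothness via Theorem \ref{smoothness criterion}, non-degeneracy via the computation of $\h^0(\odi{X}(H-D))$ as in Proposition \ref{degeneration}), and your $(1)\Rightarrow(2)$, $(2)\Rightarrow(3)$ and $(4)\Rightarrow(1)$ are essentially the paper's arguments and are fine. The genuine gap is exactly at the point you yourself call the crux of $(3)\Rightarrow(4)$: showing that a $(-1)$-line $L$ with $D.L<0$ forces $D=L$. Writing $D=kL+D''$ and saying that genus additivity plus connectedness ``either leads to a contradiction or forces $D=L$'' is not a proof. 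Connectedness of the support only eliminates the case $L.D''=0$ (with $D''\neq 0$); for $k\geq 2$ and $1\leq L.D''\leq k-1$ the additivity formula merely computes $p_a(D'')=1+(k-1)(k+2)/2-k(L.D'')$, and these values are perfectly attainable by effective divisors (for $k=2$, $L.D''=1$ it forces $p_a(D'')=1$, realized by anticanonical-type curves; for $k=3$, $L.D''=2$ it forces $p_a(D'')=0$, so the genus gives no information at all). Indeed on $X_1$ the divisor $D=4C_0+3f$ is effective, has connected support, satisfies $D^2=D.H-2$ and $D.C_0<0$, so your stated tools alone cannot produce the contradiction: one must bring in either the degree bound $D.H\leq H^2$ through further case analysis, or (as the paper does) the cohomological input that $\h^1(\odi{D})=0$ and that this vanishing passes to every subcurve $0<M\leq D$ via the surjection $\Hl^1(\odi{D})\arr\Hl^1(\odi{M})\arr 0$. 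The paper sets $M=D-L$, deduces $\h^2(\odi{X}(-M))=\h^1(\odi{M})=0$ and $\h^0(\odi{X}(-M))=0$, and contradicts Riemann--Roch, which gives $\chi(-M)=-D.L>0$. Some argument of this kind is missing from your sketch. (Relatedly, inside $(3)\Rightarrow(4)$ you use $\h^0(\shL(-1))=0$ to get $\h^0(\odi{D})=1$, but initializedness must first be established there, e.g.\ by the paper's observation that $D-H$ effective would give $0<(D-H).H\leq 0$; your $(4)\Rightarrow(1)$ shows you know the argument, but it has to appear at this earlier point.)

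There is a second, smaller gap in your non-degeneracy step. Lemma \ref{Effectivity condition for ACM divisors} only yields $\h^0(\odi{X}(H-D))>0$ when $D.H<d$; what you need is the exact value $\h^0(\odi{X}(H-D))=d-c$, equivalently $\h^1(\shL(-2))=0$, and Serre duality does not help since $\h^1(\odi{X}(H-D))=\h^1(\shL(-2))$, so ``effective of the correct dimension'' is circular as stated. The paper closes this by noting that $E=H-D$ satisfies the same numerical conditions as $D$, hence (by the part of the argument already established) $|E|$ contains a smooth rational curve, and then computing $\h^0(\odi{X}(E))=1+\h^0(\odi{E}(E))=d-c$ from the restriction sequence $0\arr\odi{X}\arr\odi{X}(E)\arr\odi{E}(E)\arr 0$ together with $\h^1(\odi{X})=0$. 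You need this (or an equivalent) computation to finish.
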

    \begin{proof}
	
Before starting the prove, we want to give some general remarks. Let $D$ be a non zero effective divisor and $\shL \cong \odi{X}(D)$.
	    First of all, from (\ref{first equalities for an effective divisor}), we have
	    \begin{align}\label{h0 of o of D}
	    	\h^0(\odi{D}) = 1 \iff \h^1(\shL^{-1})=\h^1(\shL(-1)) = 0.
	    \end{align}
	    Moreover if $\shL$ is initialized then
	    \begin{align}\label{h1 of o of D and L minus one}
	    	\h^1(\shL(-1))=\frac{D.H-D^2}{2}-1 \text{ and } \h^1(\odi{D})=0.
	    \end{align}
	    Indeed, again by (\ref{first equalities for an effective divisor}), $\h^1(\odi{D})=\h^2(\shL^{-1}) = \h^0(\shL(-1))=0$ and, since $\shL^{-1}$ is a proper sheaf of ideals of $\odi{X}$ and so $\h^2(\shL(-1))= \h^0(\shL^{-1})=0$ we get
	    \begin{align*}
	    	-\h^1(\shL(-1)) = \chi(\shL^{-1}) = \chi(-D) = \frac{D^2 - D.H}{2} + 1.
	    \end{align*}

Now we can start proving the equivalences:

$4) \then 1)$. Since $\odi{X}$ is ACM and initialized, we consider the case $\shL \cong \odi{X}(D)$, where $D$ is a rational normal curve of degree $D.H=c \leq d$. Clearly $\shL$ has global sections. Moreover from the adjunction formula we have $D^2=c-2$ and so from Riemann-Roch
	    $$
	      \chi(\shL(-1))=\chi(D-H) = (D-H)D/2+1 = 0.
	    $$
$\shL^{-1}:=\odi{X}(-D)$, being the ideal sheaf of $D$, has no global sections: $\h^2(\shL(-1))=\h^0(\shL^{-1})=0$. Therefore, if we prove that $\shL$ is ACM, we also get that $\shL$ is initialized.
	    From Lemma \ref{ACMvar} and Remark \ref{ACM properties for the dual times canonical} this is equivalent to prove that the rational normal curve $D$ is ACM, which is a classical fact (see Remark \ref{fact on rational normal curves}).

	    $1) \then 2)$ It's clear.

	    $2) \then 3)$ Let $D$ be an effective divisor such that $\shL \cong \odi{X}(D)$ and suppose $D \nsim 0$.
	    By (\ref{h1 of o of D and L minus one}) we get the equality $D^2=D.H-2$. Since $\h^0(\shL(-2))=\h^1(\shL(-2))=0$ we also have
	    $$
	      0 \leq  \h^2(\shL(-2)) = \chi(D-2H) = H^2 - D.H \then D.H \leq H^2.
	    $$
	    Finally, since $D$ is a non zero effective divisor, we have $D.H > 0$.

	    $3) \then 4)$ Let's set $\shL=\odi{X}(D)$, with $D \nsim 0$. By Lemma \ref{Effectivity condition for ACM divisors} we know that $D$ is effective. So let's also set $c:=D.H = \deg D$.

       We begin showing that $\shL$ is initialized. Otherwise suppose that $D-H$ is effective. Since $D^2=D.H-2$ we
       know that $D-H$ is non zero and so
	    $$
	    0 < (D-H).H = D.H - H^2 \leq 0
	    $$
would give us a contradiction.
	    Therefore, from (\ref{h1 of o of D and L minus one}) and (\ref{h0 of o of D}), we obtain
	    $$
	    \h^1(\shL(-1))=\h^1(\odi{D})=0 \text{ and } \h^0(\odi{D})=1.
	    $$
	    So if $|D|$ contains a smooth curve $C$, $C$ is connected, has genus $0$, i.e. it's rational, and $\deg C = D.H = c \leq H^2$.
	
	    Let's prove that $|D|$ contains a smooth curve: from Theorem \ref{smoothness criterion} we know that if $D.L \geq 0$ for any line $L$ of $X$, then $D$ contains a smooth curve. So we want to prove that if $L$ is a line of $X$ such that $D.L < 0$, then $D=L$.
	    Write $M=D-L$. $M$ is an effective divisor and suppose, by contradiction, that $M \nsim 0$. Note that, since $M^2=(D-L)^2 = c-3-2D.L$ and $M.H=c-1$, we obtain by Riemann-Roch
	    $$
	    \chi(-M) = 1+ (-M)(-M+H)/2 = 1 + (c-3 - 2D.L -c +1)/2 = -D.L > 0.
	    $$
	
        But, on the other hand, from the exact sequence
	    $$
	      0 \arr \shI_{M|D} \arr \odi{D} \arr \odi{M} \arr 0
	    $$
	    we get
	    $$
	      \Hl^1(\odi{D}) = 0 \arr \Hl^1(\odi{M}) \arr \Hl^2(\shI_{M|D}) = 0
	    $$
        where the last cohomology groups vanishes because $\shI_{M|D}$ is a sheaf on a one dimensional variety. Therefore, from (\ref{first equalities for an effective divisor}), we obtain $\h^2(\odi{X}(-M)) = \h^1(\odi{M}) = 0$.
	    Moreover, since $M \nsim 0$, we have $\h^0(\odi{X}(-M)) = 0$ and therefore
        $$
        \chi(-M)=-h^1(\odi{X}(-M))\leq 0
        $$
        which is obviously a contradiction.

Therefore we know that we can take $C\in |D|$ a smooth rational curve. In order to see that $C$ is a rational normal curve it's enough to prove that $\h^0(\shI_{C|\PP_k^d}(1))=d-c$ because then $C$ will be a non-degenerate rational curve on $\PP_k^c$ of degree $c$. As in Proposition \ref{degeneration}, this number is $\h^0(\odi{X}(-D+H))$. So let's consider the divisor $E=-D+H$. It has the following invariants:
$$
E.H=d-c\leq d,
$$
and
$$
E^2=D^2+H^2-2D.H=E.H-2.
$$
If $c=d$, since $D\nsim H$, $E$ can not be effective and therefore $\h^0(\odi{X}(E))=0$. Otherwise, if $c<d$, we have seen that under this conditions we can suppose that $E$ is a smooth rational curve. From the exact sequence

$$
0\arr\odi{X}(-E)\arr\odi{X}\arr\odi{E}\arr 0
$$
if we twist it by $\odi{X}(E)$ and take cohomology we get

$$
0\arr \Hl^0(\odi{X})\arr \Hl^0(\odi{X}(E))\arr \Hl^0(\odi{E}(E))\arr \Hl^1(\odi{X})=0.
$$
The degree of $\odi{E}(E)$ is $E^2=d-c-2$ and therefore $\h^0(\odi{E}(E))=\h^0(\odi{\PP_k^1}(d-c-2))=d-c-1$ and so $\h^0(\odi{X}(E))=d-c$.

\end{proof}

\subsection{Explicit list of ACM divisors}

Once we know how to characterize ACM line bundles on del Pezzo surfaces, this subsection will be dedicated to list them: first in the case of the quadric and then in the rest of cases consisting on blow-ups.

\begin{lemma}
   	There exist exactly (up to twist and isomorphism) $8$ initialized ACM line bundles on the del Pezzo $\PP^1_k \times \PP^1_k$ with respect to the very ample divisor $-K_X$. The initialized ones are given by $\odi{\PP^1_k \times \PP^1_k}$ and, in terms of their associated class of divisors,
	$$
	  D=h+bm \text{ or } D=bh+m \text{ with } 0 \leq b \leq 3 \text{ (} \deg D = 2+2b \text{).}
	$$
    \end{lemma}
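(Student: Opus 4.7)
The strategy is to apply Theorem \ref{maintheorem} to $X=\PP^1_k\times\PP^1_k$, parametrize divisors $D=ah+bm$ via the basis of $\Pic X$ from Remark \ref{effective divisor on quadric}, and solve the resulting numerical conditions.

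First I would record the relevant intersection data: using $h^2=m^2=0$ and $h.m=1$ one has $K_X=-2h-2m$, hence $H=-K_X=2h+2m$ and $H^2=8$. For $D=ah+bm$ this gives
\[
D^2 = 2ab,\qquad D.H = 2(a+b).
\]
By Theorem \ref{maintheorem}, $\odi{X}(D)$ is initialized and ACM (and not $\odi{X}$) if and only if
\[
D^2=D.H-2 \quad\text{and}\quad 0<D.H\leq H^2=8.
\]
Substituting, the first equation becomes $2ab=2(a+b)-2$, i.e.\ $(a-1)(b-1)=0$, while the second becomes $0<a+b\leq 4$.

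Next I would use Lemma \ref{Effectivity condition for ACM divisors} to conclude that any such $D$ is effective, hence by Remark \ref{effective divisor on quadric} it automatically satisfies $a,b\geq 0$ (alternatively one can argue directly: if $a=1$ then the solutions $b\geq 0$ fall inside the cone of effective classes, and symmetrically when $b=1$). The case $a=1$ combined with $1+b\leq 4$ and $b\geq 0$ gives $b\in\{0,1,2,3\}$, producing the four divisors $h+bm$; by symmetry, $b=1$ gives the four divisors $bh+m$ with $a\in\{0,1,2,3\}$. The two lists overlap only at $h+m$, so together they account for $4+4-1=7$ distinct classes, each with degree $D.H=2+2b$ as claimed.

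Finally, adding the initialized ACM bundle $\odi{X}$ (which is trivially initialized and ACM since $X$ is ACM) yields a total of $8$ initialized ACM line bundles up to isomorphism. No step here is really an obstacle; the only point requiring some care is to notice that the factorization $(a-1)(b-1)=0$ gives a clean split into two symmetric families and that the double-counting of $h+m$ must be accounted for when producing the final count.
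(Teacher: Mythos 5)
Your proposal is correct and follows essentially the same route as the paper: apply condition (3) of Theorem \ref{maintheorem} to $D=ah+bm$, reduce to $(a-1)(b-1)=0$ together with $0<2(a+b)\leq 8$, and enumerate (the paper leaves the enumeration and the effectivity/positivity of $a,b$ implicit, which your observation that $0<a+b$ already forces $b\geq 0$ when $a=1$ handles cleanly). Your bookkeeping of the overlap at $h+m$ and the final count of $8$ matches the paper's statement.
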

    \begin{proof}
    	Let $D=ah+bm$ be any divisor. So $D$ is initialized and ACM if and only if $D\sim 0$ or
	\begin{align*}
		0<D.H \leq 8 & \iff 0<2a + 2b \leq 8, \\
		D^2=D.H-2 & \iff 2ab=2a+2b-2 \iff (a-1)(b-1)=0,
	\end{align*}
	that gives exactly the divisors listed in the proposition.
\end{proof}

\begin{thm}\label{explicit computation of initialized ACM line bundles}
	Let $X$ be a del Pezzo surface which is a blow-up of $r$ points on $\PP^2_k$, with $0 \leq r \leq 6$. With respect to $-K_X$, the initialized ACM divisors of $X$ are $0$, the exceptional divisors and, up to permutation of the exceptional divisors, the ones listed below:
\begin{center}
\noindent\begin{tabular}{|c|l|c|}
\hline
     $\deg D$ & D & \\
\hline
     $3-m$ & $l - e_1 - \cdots - e_m$ & $0 \leq m \leq \min\{2,r\}$\\
\hline
     $6-m$ & $2l - e_1 - \cdots - e_m$ & $\max\{r-3,0\} \leq m \leq \min\{5,r\}$\\
\hline
     $8-m$ &  $3l - 2e_1 - e_2 \cdots - e_m$ & $\max\{1,r-1\} \leq m \leq r$\\
\hline		
      $9 - r$ & $4l - 2e_1 - 2e_2 - 2e_3 - e_4 \cdots - e_r$ & $r \geq 3$\\
\hline
      6 & $5l - 2e_1 - 2e_2 - 2e_3 - 2e_4 - 2e_5 - 2e_6$ & $r = 6$\\
\hline
\end{tabular}
\end{center}

	With respect to the degree $d$ we have, up to permutation of the exceptional divisors:
	\begin{center}

\noindent\begin{tabular}{|c|c|l|c|}
\hline
     $d=\deg D$ & $u(D,r)$ & D & r \\
\hline
     0 & 1 & 0 & \\
\hline
     1 & r & $e_1$ & r $\geq$ 1 \\
     & $\binom{r}{2}$ & $l - e_1 - e_2$ & r $\geq$ 2 \\
	 & $\binom{r}{5}$ & $2l - e_1 - e_2 - e_3 - e_4 - e_5$ & $r \geq 5$ \\
\hline
     2 &  r & $l - e_1$ & $r \geq 1$ \\
	 & $\binom{r}{4}$ & $2l - e_1 - e_2 - e_3 - e_4$ & $r \geq 4$ \\
	 & 6 & $3l - 2e_1 - e_2 - e_3 - e_4 - e_5 - e_6$ & $r = 6$ \\
\hline		
	    3 & 1 & l & \\
		& $\binom{r}{3}$ & $2l - e_1 - e_2 - e_3$ & $r \geq 3$ \\
		& $r\binom{r-1}{4}$ & $3l - 2e_1 - e_2 - e_3 - e_4 - e_5$ & $r \geq 5$ \\
		& 20 & $4l - 2e_1 - 2e_2 - 2e_3 - e_4 - e_5 - e_6$ & $r = 6$ \\
		& 1 & $5l - 2e_1   - 2e_2 - 2e_3 - 2e_4 - 2e_5 - 2e_6$ & $r = 6$ \\
\hline
		4 & $\binom{r}{2}$ & $2l - e_1 - e_2$ & $2 \leq r \leq 5$ \\
		& $r\binom{r-1}{3}$ & $3l - 2e_1 - e_2 - e_3 - e_4$ & $4 \leq r \leq 5$ \\
		& 10 & $4l - 2e_1 - 2e_2 - 2e_3 - e_4 - e_5$ & $r = 5$ \\
\hline
		5 & r & $2l - e_1$ & $1 \leq r \leq 4$ \\
		& $r\binom{r-1}{2}$ & $3l - 2e_1 - e_2 - e_3$ & $3 \leq r \leq 4$ \\
		& 4 & $4l - 2e_1 - 2e_2 - 2e_3 - e_4$ & $r = 4$ \\
\hline		
	    6 & 1 & 2l & $1 \leq r \leq 3$ \\
		& r(r-1) & $3l - 2e_1 - e_2$ & $2 \leq r \leq 3$ \\
		& 1 & $4l - 2e_1 - 2e_2 - 2e_3$ & $r = 3$ \\
\hline	
		7 & r & $3l - 2 e_1$ & $1 \leq r \leq 2$ \\
\hline
		8,9 & 0 && \\
\hline

\end{tabular}
\end{center}

\noindent where $u(D,r)$ is the number of divisors obtained permuting the exceptional divisors in the writing of $D$.

\end{thm}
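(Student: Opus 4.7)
The plan is to translate the characterization of initialized ACM line bundles from Theorem \ref{maintheorem}(3) into elementary Diophantine conditions on the coefficients of a divisor $D = al - \sum_{i=1}^r b_i e_i$ written in the standard basis of $\Pic X$ from Theorem \ref{picblowups}. The intersection formulas $D.H = 3a - \sum b_i$ and $D^2 = a^2 - \sum b_i^2$ rewrite the equation $D^2 = D.H - 2$ as
$$
(a-1)(a-2) = \sum_{i=1}^r b_i(b_i - 1),
$$
which by Theorem \ref{picblowups}(vi) is precisely the vanishing of the arithmetic genus of $D$ (as predicted by the identification with rational normal curves in Theorem \ref{maintheorem}(4)). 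The remaining conditions are $0 < D.H \leq 9 - r$, and effectivity of $D$ is automatic from Lemma \ref{Effectivity condition for ACM divisors}.

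Since $b_i(b_i-1) \geq 0$ for every integer $b_i$ (with equality iff $b_i \in \{0, 1\}$) and $(a-1)(a-2) \geq 0$ for every integer $a$ (with equality iff $a \in \{1, 2\}$), I would split the enumeration by the value of $a$. For $a \in \{1, 2\}$ one has $b_i \in \{0, 1\}$ and the degree conditions $0 < 3a - |S| \leq 9 - r$ (where $S = \{i : b_i = 1\}$) pick out exactly the families $l - e_1 - \cdots - e_m$ with $0 \leq m \leq \min\{2, r\}$ and $2l - e_1 - \cdots - e_m$ with $\max\{r-3, 0\} \leq m \leq \min\{5, r\}$. For $a \in \{0, 3\}$ the genus equation reduces to $\sum b_i(b_i-1) = 2$, forcing exactly one $b_i$ to lie in $\{-1, 2\}$ with the rest in $\{0, 1\}$; the options with a negative $b_i$ all force $D.H > 9 - r$, except for $a = 0$ with one $b_i = -1$, which produces the exceptional divisors, while the choices with one $b_i = 2$ yield the family $3l - 2e_1 - e_2 - \cdots - e_m$ for $\max\{1, r-1\} \leq m \leq r$.

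For $a = 4$ and $a = 5$ the right-hand side equals $6$ and $12$ respectively; running through the finitely many integer partitions of these values into summands from $\{b(b-1) : b \in \Z\} = \{0, 2, 6, 12, \ldots\}$, and intersecting with the inequality $D.H \leq 9 - r$, isolates exactly $4l - 2e_1 - 2e_2 - 2e_3 - e_4 - \cdots - e_r$ (valid for $r \geq 3$) and $5l - 2e_1 - \cdots - 2e_6$ (valid only for $r = 6$). For $a \geq 6$ or $a \leq -1$, the value $(a-1)(a-2) \geq 6$ combined with $r \leq 6$ forces $\sum b_i^2$ to be so large that any admissible $\sum b_i$ lies too far from $3a$ to satisfy $0 < 3a - \sum b_i \leq 9$; a short case-by-case check exhausts this range and completes the first table.

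The second table is then obtained by regrouping the divisors just produced by the value of $d = D.H$ and computing, for each divisor $D$ and each admissible $r$, the multiplicity $u(D, r)$ as the size of the orbit of $D$ under the natural action of the symmetric group $S_r$ on the labels of $e_1, \ldots, e_r$; this count is the multinomial coefficient determined by the multiset of the $b_i$'s. I expect the main obstacle to be organizational rather than conceptual: systematically ruling out the many candidate solutions with some $b_i < 0$ that the genus equation permits but the degree inequality $D.H \leq 9 - r$ excludes, and tracking the allowed ranges of $m$ in each family against the three-sided numerical constraints.
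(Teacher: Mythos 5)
Your overall route is the same as the paper's: reduce to the purely numerical condition (3) of Theorem \ref{maintheorem} ($D^2=D.H-2$, $0<D.H\leq 9-r$) and enumerate integer solutions $D=al-\sum b_ie_i$, then count orbits under permutation of the $e_i$ for the second table. The cases $a\in\{0,1,2,3\}$ and the partition analysis for $a=4,5$ are handled correctly, and the orbit-counting remark for $u(D,r)$ is fine.

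The genuine gap is the exclusion of the range $a\geq 6$ and $a\leq -1$. As written, your justification is not an argument: the range is infinite, so ``a short case-by-case check exhausts this range'' presupposes a bound on $a$ that you never establish, and the heuristic you give (``$(a-1)(a-2)\geq 6$ combined with $r\leq 6$ forces $\sum b_i^2$ so large that $\sum b_i$ lies too far from $3a$'') is actually false in the form stated: with only $r\leq 6$ and $0<3a-\sum b_i\leq 9$, the counting inequality $(\sum b_i)^2\leq r\sum b_i^2$ does not rule out values such as $a=6,\,d\geq 4$ or even $a$ up to about $13$ for $d$ large, so these would still have to be eliminated by finer means. The missing ingredient is exactly the paper's bounding device: writing $d=D.H$, one has $\sum b_i=3a-d$ and $\sum b_i^2=a^2+2-d$, and Cauchy--Schwarz together with the sharper constraint $r\leq 9-d$ gives $(3a-d)^2\leq (9-d)(a^2+2-d)$, i.e. $p(a)=da^2-6ad+11d-18\leq 0$; since $p(0)=p(6)=11d-18>0$ for $d\geq 2$, this yields $0<a<6$ (the case $d=1$ being settled separately by the classification of $(-1)$-lines in Proposition \ref{lines}). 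You need this, or an equivalent inequality, to make the enumeration finite. Secondarily, the paper also invokes the geometric fact that an initialized ACM divisor of degree $\geq 2$ satisfies $D.L\geq 0$ for every $(-1)$-line $L$ (hence $b_i\geq 0$ and $a\geq b_1+b_2$), which is what keeps its case analysis short; you can do without it, as your degree checks for negative $b_i$ at $a=0,3,4,5$ show, but adopting it would substantially trim the bookkeeping you correctly identify as the main burden.
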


\begin{cor}
Write $X^r$ for a del Pezzo surface blow-up of $r$ points of $\PP^2_k$ and $Q$ for the quadric $\PP^1_k \times \PP^1_k$. The table below lists the number of initialized ACM line bundles of a given degree $d \leq \Hl^2_{X^r}$ (resp. $d \leq H_Q^2$) contained in $X^r$ (resp. in $Q$).

\begin{center}

\noindent\begin{tabular}{|c|c|c|c|c|c|c|c|c|c|}
\hline
	 d & $X^r$ & $X^0$ & $X^1$ & $X^2$ & $X^3$ & $X^4$ & $X^5$ & $X^6$ & $Q$\\
\hline
	0&1 & 1 & 1 & 1 & 1 & 1  & 1  & 1& 1\\
\hline
	 1&$r + \binom{r}{2} + \binom{r}{5}$  & 0 & 1 & 3 & 6 & 10 & 16 & 27 &0\\
\hline
	 2&$r + \binom{r}{4} + r\binom{r}{6}$  & 0 & 1 & 2 & 3 & 5  & 10 & 27 &2\\
\hline
	 3&$1 + \binom{r}{3} + r\binom{r-1}{4} + \binom{r}{3}\binom{r-3}{3} + \binom{r}{6}$  & 1 & 1 & 1 & 2 & 5  & 16 & 72 &0\\
\hline
	 4&$\binom{r}{2} + r\binom{r-1}{3} + \binom{r}{2}\binom{r-2}{3}$ & 0 & 0 & 1 & 3 & 10 & 40 &    &1\\
\hline
	 5&$r + r\binom{r-1}{2} + r\binom{r-1}{3}$ & 0 & 1 & 2 & 6 & 20 &    &    &0\\
\hline
	 6&$1 + r(r-1) + \binom{r}{3}$ & 1 & 1 & 3 & 8 &    &    &    &2\\
\hline
	 7&$r$ & 0 & 1 & 2 &   &    &    &    &0\\
\hline
	 8&$0$ & 0 & 0 &   &   &    &    &    &2\\
\hline
	 9&$0$ & 0 &   &   &   &    &    &    &\\
\hline
	 Tot& & 3 & 7 & 15 & 29 & 51 & 83 & 127 &8\\
\hline
\end{tabular}
\label{table of numbers of ACM divisors for a given X and d}
\end{center}

\noindent The formula on the column of $X^r$ for the number of initialized ACM line bundles of degree $d$ makes sense only if $d \leq 9-r$.
\end{cor}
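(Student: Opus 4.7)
The proof is essentially a bookkeeping consequence of Theorem \ref{explicit computation of initialized ACM line bundles} (and the preceding lemma for the quadric), so the plan is to translate the divisor-by-divisor tally into the degree-by-degree tally displayed in the corollary.

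The strategy is the following. First, for the quadric $Q=\PP^1_k \times \PP^1_k$, I simply read off the lemma preceding Theorem \ref{explicit computation of initialized ACM line bundles}: the initialized ACM line bundles are $\OO_Q$ together with the seven divisors $h+bm$ and $bh+m$ for $0\leq b\leq 3$, and grouping by $\deg D = 2+2b$ (or $D\sim 0$) gives the column for $Q$ in the table, with totals $1,2,1,2,2$ for $d=0,2,4,6,8$ and $0$ otherwise, summing to $8$.

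Second, for $X^r$ with $0\leq r\leq 6$, I go through the list of ACM divisor classes produced by Theorem \ref{explicit computation of initialized ACM line bundles} and, for each class $D$, compute its degree and the multiplicity $u(D,r)$, which is just the number of ways of assigning the exceptional divisors $e_1,\ldots,e_r$ to the slots occurring in the written form of $D$. For the five families in the first table (the class $0$, the class $e_1$, and the families $l-e_1-\cdots-e_m$, $2l-e_1-\cdots-e_m$, $3l-2e_1-e_2-\cdots-e_m$, $4l-2e_1-2e_2-2e_3-e_4-\cdots-e_r$, $5l-2\sum e_i$) the corresponding $u(D,r)$ is a product of binomial coefficients determined by how many $e_i$'s appear with a given multiplicity; for instance, $3l-2e_1-e_2-\cdots-e_m$ contributes $r\binom{r-1}{m-1}$ divisors (first choose the exceptional divisor appearing with coefficient $2$, then the remaining $m-1$ ones), and $4l-2e_1-2e_2-2e_3-e_4-\cdots-e_r$ contributes $\binom{r}{3}$. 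Writing $d=9-r$ for the degree of $X^r$ and collecting the classes of the same degree $\deg D$ gives one row of the second table; a direct comparison with the corollary's table verifies the claimed formulas in each degree.

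Third, I verify the totals $3,7,15,29,51,83,127$ for $r=0,\ldots,6$ either by summing the column entries or more compactly by noticing that, after adding the zero class, these numbers match $2+\#\{\text{effective ACM classes}\}$; one may cross-check with the alternative count $u(D,r)$ already tabulated in the theorem and obtain the totals directly. The only genuine subtlety is ensuring, in the degree $d=9-r$ row, that the classes corresponding to divisors of maximal degree $H=-K_X$ on each surface are included with the correct multiplicity (this is where the $4l-2e_1-2e_2-2e_3-e_4-\cdots-e_r$ family and, for $r=6$, the class $5l-2\sum e_i$ contribute), and that no class exceeds the bound $\deg D \leq 9-r$ imposed by Theorem \ref{maintheorem}. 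Once these boundary cases are handled, all rows of the corollary's table follow by pure combinatorial addition from the rows of the theorem, and the final note that the $X^r$-column formula is meaningful only for $d\leq 9-r$ is just the reminder that a larger $d$ would force $\deg D > H^2$, which is forbidden by part~(3) of Theorem \ref{maintheorem}. The main (and only) obstacle is clerical: being careful that each permutation of exceptional divisors is counted once and only once and that the binomial coefficients correctly reflect the stabilizer of each writing.
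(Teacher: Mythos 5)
Your proposal is correct and matches the paper's (implicit) argument: the corollary has no separate proof in the paper precisely because it is the same bookkeeping you describe, namely reading off the quadric lemma and summing, degree by degree, the multiplicities $u(D,r)$ of the divisor classes listed in Theorem \ref{explicit computation of initialized ACM line bundles}, with the combinatorial counts (e.g.\ $r\binom{r-1}{m-1}$ for $3l-2e_1-e_2-\cdots-e_m$ and $\binom{r}{3}$ for $4l-2e_1-2e_2-2e_3-e_4-\cdots-e_r$) exactly as you state. The totals $3,7,15,29,51,83,127$ and $8$ check out by direct summation, so nothing further is needed.
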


\begin{proof}[Proof of \ref{explicit computation of initialized ACM line bundles}.]
	We will look for ACM initialized line bundles applying condition $(3)$ from Theorem \ref{maintheorem}.
	
	Let $D$ be a divisor and suppose that $D$ is not exceptional. Set also $d = D.H$. Label the exceptional divisors $e_1, \dots, e_r$ in such a way that $D.e_1 \geq \cdots \geq D.e_r$, i.e.
	$$
	  D = al - b_1 e_1 - \cdots - b_r e_r \text{ with } b_1 \geq \cdots \geq b_r.
	$$
	Since we already know the $(-1)$-lines of $X$ and since for an initialized ACM divisor $C$ with $\deg C \geq 2$, $C.L \geq 0$ for any $(-1)$-line $L$ of $X$, we can assume $b_r \geq 0$.
	
	Let $m$ be such that $b_1, \dots, b_m > 0$ and $b_{m+1} = \cdots = b_r = 0$. If $\pi : X \arr \PP^2_k$ is the blow-up that defines $X$, let $Y$ be the blow-up of $\pi(e_1), \dots, \pi(e_m)$ and denote by $l',e_1',\dots ,e_m'$ the usual basis of Pic($Y$). $Y$ is a del Pezzo surface and we have a map $X \arrdi{f} Y$ such that $f^*l'=l, f^*e'_1 = e_1, \dots f^*e'_m = e_m$. If we set
	$$
	  D' = al' - b_1 e'_1 - \cdots - b_m e'_m \in \Pic Y
	$$
	then $D$ is initialized and ACM if and only if $D'$ is so, thanks to Theorem \ref{maintheorem}. In conclusion, we can assume $m = r$, i.e., $b_r > 0$. This means that if we find the initialized ACM divisors in this case, the other ones can be obtained with the same writing, only checking the condition $\deg D \leq H^2$.

	In the case $r=0$, i.e. $X \cong \PP^2_k$, the initialized ACM line bundles are $0, l, 2l$.
	
	Assume now $r=1$ and following notation from Remark \ref{x1} write $D=\alpha C_0+\beta f = \beta l - (\beta - \alpha)e_1$. Since $H=2C_0+3f$ we have
	$$
	  D^2=-\alpha^2+2\alpha \beta, \ \ \ \ \ \ d = 2\beta+\alpha
	$$
	and
	$$
	  D^2=d-2 \iff 2\beta(\alpha-1)=(\alpha-1)(\alpha+2).
	$$
	If $\alpha=1$ then $D=\beta l - (\beta - 1)e_1$ and we have
	$$
	  1\leq d=2\beta+1 \leq H^2=8 \iff 0\leq\beta \leq 3.
	$$
	In this way we obtain divisors $e_1, l, 2l - e_1, 3l - 2e_1$.
	
	If $\alpha \neq 1$, then $\alpha=2\beta-2$, $D= \beta l - (2 - \beta)e_1$ and
	$$
	 1\leq d=4\beta-2 \leq H^2=8 \iff 1\leq\beta \leq 2.
	$$
	So we obtain divisors $l - e_1, 2l$.
	
	We can so assume $r \geq 2$ and, since we have already treated the case $d=1$ in Proposition \ref{lines}, $d \geq 2$. So suppose that $D$ is ACM, initialized with $\deg D = d \geq 2$. The equations $D^2 = d-2$ and $D.H=d$ are
	\begin{align}
		\label{eq: D*D = d - 2} \sum_i b_i^2 & = a^2+2 -d, \text{ and}\\
		\label{eq: D.H = d} \sum_i b_i & = 3a -d.
	\end{align}

	The first step is to prove that $b_1+b_2 \leq a \leq 5$. The first inequality is
	$$
	  D.F_{1,2} = a - b_1 - b_2 \geq 0
	$$
	which holds since $D$ is a rational normal curve of degree $d \geq 2$.
	From (\ref{eq: D*D = d - 2}) and (\ref{eq: D.H = d}) we obtain
	$$
	  (3a - d)^2 = (\sum_i b_i)^2 \leq r\sum_i b_i^2=r(a^2+2-d)\leq (9-d)(a^2+2-d)
	$$
	which is equivalent to
	$$
	  p(a) = d a^2 - 6ad + 11d-18 \leq 0.
	$$
	But
	$$
	  p(0)=p(6)=11d-18 \geq 22-18=4 >0 \then 0 < a < 6.
	$$
	
	In particular we have $b_1 < a \leq 5$. The remaining part of the proof is divided in the following cases: $b_1=4$, $b_1 = 3$ and $b_1 \leq 2$.
	
	$\bullet$ $b_1 = 4$. $a \geq b_1 + b_2 \geq 5$ implies that $a=5$ and $b_2 = \cdots = b_r = 1$. So also this case is impossible since otherwise from (\ref{eq: D.H = d}) we obtain
	$$
	  r-1 = 11-d \then d = 12 - r \leq 9 - r.
	$$
	
	$\bullet$ $b_1 = 3$. Again $a \geq b_1 + b_2$ tells us that $b_2 \leq 2$ and $a=4,5$. We split it in two parts: $b_2 = 1$ and $b_2 = 2$.
	
	$b_2 = 1$: In this case from (\ref{eq: D*D = d - 2}) and (\ref{eq: D.H = d}) we obtain
	$$
	  r-1 = a^2-7-d=3a-3-d \then a^2-7 = 3a - 3 \then a = 4.
	$$
	and the contradiction comes from
	$$
	  r-1 = 9-d \then d=10 -r \leq 9 - r.
	$$
	
	$b_2 = 2$: We have $a=5$ and (\ref{eq: D*D = d - 2}) and (\ref{eq: D.H = d}) become
	$$
	  \sum_{i>2}b_i^2 = 14-d, \ \ \ \sum_{i>2}b_i = 10 -d.
	$$
	Since $14-d \neq 10 -d$ we cannot have $r=2$ or $b_3 = 1$, so $b_3 = 2$. Arguing in this way we also obtain $r \geq 4$, $b_4 = 2$ and the contradiction
	$$
	  \sum_{i>4}b_i^2 = \sum_{i>4}b_i = 6 -d \then r-4 = 6-d \then d=10-r \leq 9-r.
	$$
	
	$\bullet$ $b_1 \leq 2$. Let $s$ be such that $b_1=\cdots=b_s=2$, $b_{s+1}=\cdots=b_r=1$ for $0 \leq s \leq r$.
	The equation (\ref{eq: D*D = d - 2}) and (\ref{eq: D.H = d}) become
	$$
	  4s + (r-s)=a^2+2-d, \ \ \ 2s + (r-s)=3a-d
	$$
	which implies that
	$$
	  a^2-3a+2-2s=0.
	$$
	The discriminant $\Delta$ of this equation and the solutions are
	$$
	  \Delta = 1 + 8s, \ \ \  a=\frac{3 \pm \sqrt{\Delta}}{2}.
	$$
	$\Delta$ is a square if and only if $s=0,1,3,6$ and, respectively, we obtain $\sqrt{\Delta} = 1,3,5,7$.
	
	If $s=0$, then $a=1,2$ and we obtain the divisors
	$$
	  \begin{array}{lll}
	  	D=l-e_1 - e_2, & \text{ for } r = 2, & d = 1, \\
		D=2l-e_1 - \cdots - e_r, & \text{ for } r \leq 5, & d = 6-r.
	  \end{array}
	$$
	
	If $s=1$, then $a=3$ and we obtain, for any $r \geq 2$, the divisors
	$$
	  D = 3l - 2e_1 - e_2 - \cdots - e_r, \ \ \ \ d = 8 - r.
	$$
	
	If $s=3$, then $a=4$ and we obtain, for $r \geq 3$, the divisors
	$$
	  D = 4l - 2e_1 - 2e_2 - 2e_3 - e_4 - \cdots - e_r, \ \ \ \ d = 9 - r.
	$$
	
	If $s=6$, then $a=5$ and we obtain, for $r=6$, the divisor
	$$
	  D = 5l - 2e_1 - 2e_2 - 2e_3 - 2e_4 - 2e_5 - 2e_6, \ \ \ \ d = 3.
	$$
	
	Since, by construction, any divisor $D$ above satisfies $D^2 = D.H - 2$ and $d=D.H > 0$, we can conclude that they are all ACM initialized line bundles.
\end{proof}

\subsection{Bounds for the intersection product of ACM divisors}

We want to end this section giving a bound for the intersection product of two ACM, initialized divisors. Let's start with the upper bound. We need the following lemma:

\begin{lemma}
	Let $X\subseteq\PP^n_k$ be a del Pezzo surface of degree $n$ and let $C, D$ be smooth rational curves on $X$ of respective degree $c, d$. Then, if $(m-1)n < c+d \leq mn$, where $m \geq 1$, we have
	\begin{align}\label{general bound for intersection product of rational curves}
		  C.D \leq 2 + (m-1)(c + d) - m(m-1)n/2
	\end{align}
	and the equality occurs if and only if $C+D\sim mH$. In this case, if $ m > 1$, we also have $c=d=mn/2$, so that $mn$ has to be even.
\end{lemma}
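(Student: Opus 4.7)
My plan is to prove the bound via the Hodge Index Theorem applied to $H$ and the auxiliary divisor $B := mH - C - D$.

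First, I would record the basic numerical data. Since $C$ and $D$ are smooth rational curves on the del Pezzo surface, the adjunction formula gives $C^2 = C.H - 2 = c-2$ and $D^2 = d-2$; also $H^2 = n$. Setting $E := C+D$, I get $E.H = c+d$ and $E^2 = c+d-4+2C.D$. Thus
\begin{align*}
B.H &= mn-(c+d),\\
B^2 &= m^2 n - 2m(c+d) + (c+d) - 4 + 2C.D.
\end{align*}

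The key step is Hodge Index: since $H^2=n>0$, for any divisor $B$ one has $(B.H)^2 \geq H^2 \cdot B^2$. Substituting the expressions above, both sides share the term $m^2 n^2 - 2mn(c+d)$, and after cancellation the inequality collapses to
\[
(c+d)^2 \geq n(c+d) - 4n + 2n\,C.D,
\]
which I would rewrite as $C.D \leq (c+d)^2/(2n) - (c+d)/2 + 2$. Comparing this bound with the desired one $2+(m-1)(c+d)-m(m-1)n/2$, the difference is (up to the positive factor $1/(2n)$) the polynomial
\[
(c+d)^2 - (2m-1)n(c+d) + m(m-1)n^2 = \bigl((c+d)-(m-1)n\bigr)\bigl((c+d)-mn\bigr),
\]
which is $\leq 0$ precisely under the hypothesis $(m-1)n < c+d \leq mn$. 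This proves the inequality \eqref{general bound for intersection product of rational curves}.

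For the equality case, I need both inequalities above to be equalities. The factorization shows that the second one is strict whenever $(m-1)n<c+d<mn$, so equality forces $c+d=mn$, i.e.\ $B.H=0$. Equality in Hodge Index then forces $B$ to be numerically proportional to $H$; combined with $B.H=0$, this gives $B\equiv 0$ numerically, hence $C+D\sim mH$ (using that on a del Pezzo surface $\Pic(X)=\operatorname{Num}(X)$ is torsion-free). The converse is a direct computation: if $C+D\sim mH$ then $B=0$ and $C.D$ attains the stated value.

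Finally, when $m>1$ and $C+D\sim mH$, I would derive the equality $c=d=mn/2$ by applying adjunction to both $D$ and $mH-C$. Since $D\sim mH-C$, the two expressions for $D^2$ must agree:
\[
mn-c-2 = D^2 = (mH-C)^2 = m^2 n - 2mc + c - 2,
\]
which simplifies to $mn(1-m) = 2c(1-m)$, and dividing by the nonzero factor $1-m$ yields $c=mn/2$, hence also $d=mn/2$. The only mildly delicate point in the whole argument is the step from numerical triviality of $B$ to the linear equivalence $C+D\sim mH$, but this is routine on a rational surface.
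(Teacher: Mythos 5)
Your proof is correct, and it takes a genuinely different route from the paper's. The paper argues by a dichotomy on the divisor class $E:=C+D-mH$: if $E$ is effective, then $E.H\leq 0$ together with ampleness of $H$ forces $E\sim 0$, and a direct computation gives the equality case (including $c=d=mn/2$ for $m>1$); if $E$ is not effective, then $\h^0(E)=0$, Serre duality and the hypothesis $(m-1)n<c+d$ give $\h^2(E)=\h^0((m-1)H-C-D)=0$, and Riemann--Roch yields $\chi(E)=C.D-1-(m-1)(c+d)+m(m-1)n/2=-\h^1(E)\leq 0$, which is in fact stronger by $1$ than the stated bound and so settles the equality characterization at once. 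You instead apply the Hodge Index inequality $(B.H)^2\geq H^2B^2$ to $B=mH-C-D$, obtaining the $m$-free bound $C.D\leq (c+d)^2/(2n)-(c+d)/2+2$, and compare it with the stated one via the factorization $((c+d)-(m-1)n)((c+d)-mn)\leq 0$; equality then forces $c+d=mn$ and $B$ numerically trivial, whence $C+D\sim mH$ using that numerical and linear equivalence coincide on a del Pezzo surface (the paper's description of $\Pic X$ with its unimodular intersection form justifies this), and your adjunction computation for $c=d=mn/2$ matches the paper's. Your approach is purely numerical (no cohomology beyond quoting the index theorem and $\Pic=\operatorname{Num}$) and even produces a sharper intermediate bound, while the paper's argument is shorter within its own toolkit, works directly with effectivity, and identifies the equality locus as exactly the case where $C+D-mH$ is effective. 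Both are complete; the only point you rightly flag, passing from numerical triviality of $B$ to $C+D\sim mH$, is indeed routine here.
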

\begin{proof}
	Set $E:=C+D - mH$. We will use the fact that $C^2=c-2$ and $D^2=d-2$.
	
	If $E$ is effective, since $E.H \leq 0$, we obtain $C+D\sim mH$ and therefore, multiplying $mH$ by $C$ and $D$,
	$$
	C.D = (m-1)d + 2 = (m-1)c + 2.
	$$
	If $m=1$ the bound in (\ref{general bound for intersection product of rational curves}) is reached. This also happens if $m > 1$, since in this case we have $c=d=mn/2$.
	
	Assume now that $E$ is not effective, i.e. $\h^0(E) = 0$. We have
	$$
	(m-1)n - c -d < 0 \then  \h^2(E)=\h^0((m-1)H - C -D)=0.
	$$
	So Riemann-Roch gives
	$$
	  -\h^1(E) = \chi(E) = C.D - 1 - (m-1)(c+d) + m(m-1)n/2 \leq 0
	$$
	and so the desired formula.
\end{proof}

By Theorem \ref{maintheorem} we can conclude that:
\begin{cor}\label{upperbound}
	Let $X\subseteq\PP^n_k$ be a del Pezzo surface of degree $n$ and let $C, D$ be non zero ACM, initialized divisors of respective degree $c, d$. Then:
	\begin{enumerate}
	\item if $c+d > n$ we have
	$$
	 C.D \leq 2 + c + d - n
	$$
	and the equality occurs if and only if $C+D\sim 2H$.
	In this case $c=d=n$.
		
	\item if $c+d \leq n$ we have
	$$
	 C.D \leq 2
	$$
	and the equality occurs if and only if $C+D\sim H$.
	In this case $c+d=n$.
	\end{enumerate}
\end{cor}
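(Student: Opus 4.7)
The plan is to read this off directly from the preceding lemma, so the main task is just a clean case distinction on $c+d$ and an invocation of Theorem \ref{maintheorem} to produce smooth rational representatives.

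First I would use Theorem \ref{maintheorem}: a non-zero initialized ACM line bundle on $X$ has the form $\odi{X}(D)$ for some rational normal curve $D\subseteq X$ of degree at most $n = H^2$. In particular, the divisor classes of $C, D$ contain smooth rational curves (still denoted $C, D$) of degrees $c, d$ with $1\leq c, d\leq n$. This legitimizes the hypothesis of the preceding lemma, because $C^2 = c-2$ and $D^2 = d-2$ by adjunction.

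Next I would observe that since $c, d \leq n$ we have $2 \leq c+d \leq 2n$, so the integer $m$ defined by $(m-1)n < c+d \leq mn$ in the lemma is either $1$ or $2$. For case (1) of the corollary, $c+d > n$ forces $m=2$, and the lemma yields
\[
  C.D \leq 2 + (c+d) - n,
\]
with equality iff $C+D \sim 2H$; the lemma's equality clause (for $m>1$) then gives $c=d=mn/2=n$, matching the claim. For case (2), $c+d\leq n$ (together with $c+d>0$) forces $m=1$, and the lemma becomes $C.D \leq 2$, with equality iff $C+D\sim H$; intersecting with $H$ then yields $c+d = H^2 = n$. Note that the auxiliary conclusion $c=d=mn/2$ from the lemma is explicitly reserved for $m>1$, so in case (2) one only recovers $c+d=n$, consistent with the statement.

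There is no real obstacle here: the work has already been done in the lemma, and the only thing to check is that the parameter $m$ takes the right value under each of the two hypotheses, which is automatic from $c, d \leq n$. The only bookkeeping point is to pass from a non-zero ACM initialized divisor class to an honest smooth rational curve representative, which is precisely what clause (4) of Theorem \ref{maintheorem} provides.
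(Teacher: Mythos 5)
Your argument is correct and is exactly the paper's route: the paper derives the corollary directly from the preceding lemma after using Theorem \ref{maintheorem} to identify non-zero initialized ACM divisors with (classes of) smooth rational curves of degree at most $n$, which is precisely your reduction, with the case split $m=1$ or $m=2$ handled as you describe. Nothing further is needed.
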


Regarding the lower bound, we have the following:
\begin{lemma}\label{lowerbound}
	Let $X\subseteq\PP^n_k$ be a del Pezzo surface of degree $n$ and let $C, D$ be non zero ACM divisors on $X$ of respective degree $c, d$. Then
	\begin{align}
		  C.D \geq -2 + min \{c,d \}
	\end{align}
	and the equality occurs if and only if $D\sim C$.
\end{lemma}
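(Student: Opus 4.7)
The plan is to assume without loss of generality that $c \leq d$, so the statement becomes $C.D \geq c-2$ with equality if and only if $C \sim D$. By Theorem~\ref{maintheorem}\,(3), any non-zero initialized ACM divisor on $X$ satisfies $C^2 = c-2$ and $1 \leq c \leq n$, and likewise for $D$. The easy direction is immediate: if $C \sim D$, then $C.D = C^2 = c - 2$.

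For the strict inequality when $C \nsim D$, set $E := C - D$ and apply Riemann-Roch to $\odi{X}(E)$. Using $\chi(\odi{X}) = 1$ together with $C^2 = c-2$ and $D^2 = d-2$, the formula $\chi(E) = E(E+H)/2 + 1$ simplifies to
\[
\chi(E) \;=\; c - 1 - C.D.
\]
So once the vanishings $\h^0(E) = \h^2(E) = 0$ are verified, $\chi(E) = -\h^1(E) \leq 0$ yields the sharper bound $C.D \geq c - 1$, which is strictly better than the desired $c-2$ and therefore simultaneously gives both the inequality and the fact that equality can occur only when $C \sim D$.

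The two vanishing statements are elementary. For $\h^0(E) = \h^0(C-D)$: if $c < d$ then $E.H < 0$ and $E$ is not effective; if $c = d$ but $E \nsim 0$, the ampleness of $H = -K_X$ forces every non-zero effective divisor on $X$ to have strictly positive degree, hence $\h^0(E) = 0$ in this subcase as well. For $\h^2(E) = \h^0(D - C - H)$, the degree of $D - C - H$ is $d - c - n$, and the bounds $c \geq 1$ and $d \leq n$ from Theorem~\ref{maintheorem} give $d - c - n \leq -1 < 0$, so this $\h^0$ vanishes as well.

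I do not expect any serious obstacle: the argument reduces to Riemann-Roch together with the two mild numerical estimates on $c$ and $d$. The only point requiring a little care is the borderline case $c = d$ with $C \nsim D$, where $(C-D).H = 0$ and the vanishing of $\h^0(C-D)$ must be deduced from the ampleness of the anticanonical class rather than from a naive degree argument.
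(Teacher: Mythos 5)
Your proposal is correct and is essentially the paper's own argument: after reducing to Theorem \ref{maintheorem}\,(3), one applies Riemann--Roch to the difference $E$ of the two divisors (the paper takes $E=D-C$ with $d\leq c$, you take $E=C-D$ with $c\leq d$ --- the same computation up to relabeling), verifies $\h^0(E)=\h^2(E)=0$ exactly as you do, and obtains $C.D\geq \min\{c,d\}-1$ when $C\nsim D$, which settles both the bound and the equality case. No gaps.
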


\begin{proof}
The proof is analogous to the previous lemma: by Theorem \ref{maintheorem} we know that $C$ and $D$ correspond to rational normal curves with $1\leq c,d\leq n$; let's suppose that $d\leq c$ and let's define $E:=D-C$. If $E\sim 0$ then multiplying by $D$ we get $D.C=d-2$. Otherwise we have
$$
\h^0(E)=0 \text{ and } \h^2(E)=\h^0(C-D-H)=0
$$
\noindent and therefore, by Riemann-Roch:
$$
0\geq -\h^1(E)=\chi(E)=\frac{(D-C)(D-C+H)}{2}+1=-1+d-C.D.
$$

\end{proof}

\section{ACM bundles of higher rank}

In the last section our aim is to construct ACM bundles of rank $n$ for any $n\geq 2$. In particular, we're going to see that strong del Pezzo surfaces of degree $\leq 6$ are of wild representation type. Notice that the bundles $\shE$ that we're going to obtain are simple, i.e, $\Hom(\shE,\shE)\cong k$ and, therefore, they are indecomposable. So they represent new ACM bundles that don't come from direct sums of the known ACM line bundles.
\subsection{Extensions of bundles} We remark the following property at the beginning since it will be very useful in this section.

\begin{prop}\label{ACM divisors of maximal degree are 0 regular}
	Let $X$ be a strong del Pezzo surface of degree $n$ and let $D$ be an initialized, ACM divisor on $X$. Then $\odi{X}(D)$ is $0$-regular if and only if $\deg D = D.H = n$.
\end{prop}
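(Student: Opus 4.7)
The plan is to unpack the definition of $0$-regularity on $\PP^n_k$ applied to (the pushforward of) $\odi X(D)$. For a coherent sheaf supported on a surface this amounts to the two vanishings $\Hl^1(X,\odi X(D-H))=0$ and $\Hl^2(X,\odi X(D-2H))=0$, since the higher $\Hl^i$ vanish for dimensional reasons. The first of these is automatic from the hypothesis that $\odi X(D)$ is initialized and ACM, by condition (2) of Theorem \ref{maintheorem}. So the whole statement collapses to the question of when $\Hl^2(\odi X(D-2H))$ vanishes, and by Serre duality on $X$ (using $\omega_X\cong\odi X(-H)$) this group is dual to $\Hl^0(\odi X(H-D))$. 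The goal therefore becomes: show $\Hl^0(H-D)=0$ if and only if $D.H=n$.

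I would split according to the two cases of Theorem \ref{maintheorem}(3). In the trivial case $\odi X(D)\cong\odi X$ one has $\deg D=0\neq n$ and $\Hl^0(H-D)=\Hl^0(H)\neq 0$ by very ampleness of $H$, so $\odi X(D)$ is not $0$-regular, as required. Otherwise $D^2=D.H-2$ with $c:=D.H\in(0,n]$, and the Riemann-Roch formula recorded in the excerpt gives
\[
\chi(H-D)=\frac{(H-D)(2H-D)}{2}+1=\frac{2n-3c+(c-2)}{2}+1=n-c.
\]
Moreover $\Hl^2(H-D)\cong\Hl^0(D-2H)^{\vee}=0$, because $(D-2H).H=c-2n<0$ rules out any global section. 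Consequently $\h^0(H-D)\geq n-c$.

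To finish the argument, if $c<n$ this inequality already forces $\h^0(H-D)>0$, and $\odi X(D)$ fails to be $0$-regular. If $c=n$ I must upgrade the inequality to equality: the divisor $H-D$ is non-zero, because $D\sim H$ would yield the contradiction $n=H^2=D^2=c-2=n-2$; so $H-D$ is a non-trivial divisor with $(H-D).H=0$, and effectivity combined with ampleness of $H$ would force $(H-D).H>0$, whence $H-D$ is not effective and $\h^0(H-D)=0$. The only delicate step is precisely this exclusion of $D\sim H$ in the boundary case; everything else is a direct combination of Serre duality, Riemann-Roch, and the classification supplied by Theorem \ref{maintheorem}.
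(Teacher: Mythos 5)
Your proof is correct, but it is not quite the paper's argument, so a brief comparison is in order. Both proofs reduce $0$-regularity to the single vanishing $\h^2(\shL(-2))=0$, since $\h^1(\shL(-1))=0$ is automatic for an initialized ACM line bundle. The paper then uses the full strength of condition (2) of Theorem \ref{maintheorem}: because $\h^0(\shL(-2))=\h^1(\shL(-2))=0$, one gets the exact equality $\h^2(\shL(-2))=\chi(D-2H)=n-D.H$, and the equivalence with $\deg D=n$ is immediate, with no case split beyond disposing of $D\sim 0$ via $\h^2(\odi{X}(-2))=\chi(-2H)=n+1\neq 0$. You instead dualize to $\h^0(H-D)$ and apply Riemann--Roch to $H-D$; since you never invoke $\h^1(\shL(-2))=0$ (equivalently $\h^1(H-D)=0$), you only obtain the inequality $\h^0(H-D)\geq n-c$, which settles $c<n$ but forces a separate treatment of the boundary case $c=n$, where you rule out effectivity of $H-D$ by $H-D\not\sim 0$ (from $D^2=c-2\neq n=H^2$) together with $(H-D).H=0$ and ampleness of $H$. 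That extra step is sound, and your route has the minor virtue of drawing less on Theorem \ref{maintheorem} (only initializedness, the ACM vanishing $\h^1(\shL(-1))=0$, and the numerical condition $D^2=D.H-2$, $0<D.H\leq n$); the paper's version is shorter precisely because the additional vanishing $\h^1(\shL(-2))=0$ is already part of the classification and yields the exact count at once.
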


\begin{proof}
	Write $d$ for the degree of $D$ and set $\shL = \odi{X}(D)$. If $d=0$, i.e. $\shL \cong \odi{X}$, since
	$$
	\h^2(\odi{X}(-2))=\chi(-2H) = n +1 \neq 0
	$$
	$\shL$ is not $0$-regular. So we can assume $d > 0$.
	Since $\h^0(\shL(-2)) = \h^1(\shL(-2))=0$, using Theorem \ref{maintheorem}, we have
	$$
	      \h^2(\shL(-2)) = \chi(D-2H) = n - D.H.
	$$
	Finally, being $\h^1(\shL(-1)) = 0$ by hypothesis, we can conclude that $\shL$ is $0$-regular if and only if $d=D.H=n$, as required.
\end{proof}

Given a projective variety $X$ and coherent sheaves $\shF, \shG$ on it, we're going to be interested in extensions of the form

$$
	0 \arr \shF \arr \shE \arr \shG \arr 0.
$$
Given another extension
$$
	0 \arr \shF \arr \shE' \arr \shG \arr 0
$$
we are going to say that they are \emph{equivalent} if there exists an isomorphism $\psi:\shE\longrightarrow\shE'$ such that the following diagram commutes:

$$
\xymatrix{
0 \ar[r] &  \shF \ar[r] \ar@{=}[d] & \shE \ar[r] \ar[d]^\psi & \shG \ar[r] \ar@{=}[d] & 0\\
0 \ar[r] &  \shF \ar[r] & \shE' \ar[r] & \shG \ar[r] & 0.\\
}
$$

A \emph{weak equivalence} of extensions is similarly defined, except that we don't require the morphisms $\shF\longrightarrow\shF$ and $\shG\longrightarrow\shG$ to be the identity but only isomorphisms.

It's a well-known result that equivalent classes of extensions of $\shG$  by $\shF$ correspond bijectively to the elements of $\Ext^1(\shG,\shF)$. If $$
	0 \arr \shF \arr \shE \arr \shG \arr 0
$$

\noindent is such an extension, the corresponding element $[\shE] \in \Ext^1(\shG,\shF)$ is the image of $\id_{\shF}$ under the morphism
	$$
	  \Hom(\shF,\shF) \arrdi{\delta} \Ext^1(\shG,\shF)
	$$
	obtained applying $\Hom(-,\shF)$ to the exact sequence above. We will use the symbol $\delta$ for this morphism. The \emph{trivial} extension $\shF\oplus \shG$ corresponds to $0\in\Ext^1(\shG,\shF)$. Inside $\Ext^1(\shG,\shF)$ weak equivalence defines an equivalent relation that will be denoted by $\thicksim_w$.


\begin{dfn}
Given a variety $X$, a coherent sheaf $\shE$ on it is called \emph{simple} if $\Hom(\shE,\shE)\cong k$.
\end{dfn}

\begin{prop}\label{base step for extensions}
	Let $X$ be a projective variety over $k$ and $\shF_1, \dots,\shF_{r+1}$, with $r \geq 1$, be simple coherent sheaves on $X$ such that
	$$
	  \Hom(\shF_i,\shF_j)=0 \text{ for } i \neq j.
	$$
	Denote also
	$$
	  U=\Ext^1(\shF_{r+1},\shF_1)-\{0\} \times \cdots \times \Ext^1(\shF_{r+1},\shF_r)-\{0\} \subseteq \Ext^1(\shF_{r+1},\bigoplus_{i=1}^r \shF_i).
	$$
	Then a sheaf $\shE$ that comes up from an extension of $\shF_{r+1}$ by $\bigoplus_{i=1}^r \shF_i$ is simple if and only if $[\shE] \in U$ and given two extensions $[\shE], [\shE'] \in U$ we have that
	$$
	  \Hom(\shE,\shE') \neq 0 \iff [\shE]\thicksim_w [\shE'].
	$$
	To be more precise, the simple coherent sheaves $\shE$ coming up from an extension of $\shF_{r+1}$ by $\bigoplus_{i=1}^r \shF_i$
	$$
	0 \arr \bigoplus_{i=1}^r \shF_i \arr \shE \arr \shF_{r+1} \arr 0
	$$
	are parametrized, up to isomorphisms (of coherent sheaves), by
	$$
	  (U/\thicksim_w) \cong\PP(\Ext^1(\shF_{r+1},\shF_1)) \times \cdots \times \PP(\Ext^1(\shF_{r+1},\shF_r)).
	$$
\end{prop}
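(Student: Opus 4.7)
The plan is to reduce everything to a diagram chase, exploiting the hypotheses on $\Hom$-vanishing and simplicity. Given two extensions of $\shF_{r+1}$ by $\bigoplus_{i=1}^r \shF_i$ with classes $\xi=(\xi_i)_i$ and $\xi'=(\xi'_i)_i$ under the canonical decomposition
$$
\Ext^1\Bigl(\shF_{r+1},\bigoplus_{i=1}^r \shF_i\Bigr)=\bigoplus_{i=1}^r \Ext^1(\shF_{r+1},\shF_i),
$$
I would first show that any morphism $\phi\colon \shE \arr \shE'$ has a very rigid form. The composition $\shF_i \hookrightarrow \shE \arrdi{\phi} \shE' \twoheadrightarrow \shF_{r+1}$ lies in $\Hom(\shF_i,\shF_{r+1})=0$, so $\phi$ restricts to a morphism $\bigoplus \shF_i \arr \bigoplus \shF_i$. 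By $\Hom(\shF_i,\shF_j)=0$ for $i\neq j$ this restriction is block diagonal, and by simplicity of each $\shF_i$ it is multiplication by some $\lambda_i\in k$. The induced endomorphism of $\shF_{r+1}$ is, again by simplicity, $\lambda_{r+1}\id$ for some $\lambda_{r+1}\in k$.

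Next, the commutativity of $\phi$ with the two short exact sequences, together with the description of the extension classes via the connecting maps of $\Hom(-,\shF_i)$ and $\Hom(\shF_{r+1},-)$, translates into the relations
$$
\lambda_i\,\xi_i \;=\; \lambda_{r+1}\,\xi'_i \qquad \text{in } \Ext^1(\shF_{r+1},\shF_i),\quad i=1,\ldots,r.
$$
Conversely every tuple $(\lambda_1,\dots,\lambda_{r+1})$ satisfying these relations lifts to a $\phi$. Moreover, if $(\lambda_1,\dots,\lambda_{r+1})=0$ then $\phi$ kills $\bigoplus \shF_i$, hence factors through a map $\shF_{r+1}\arr \shE'$; composing with $\shE'\twoheadrightarrow \shF_{r+1}$ gives $0$, so this factor lands in $\bigoplus \shF_i$, and $\Hom(\shF_{r+1},\shF_i)=0$ forces $\phi=0$. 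Thus nonzero morphisms correspond to nonzero scalar tuples.

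Setting $\shE=\shE'$ and $\xi=\xi'$, the endomorphism $\phi$ is a scalar exactly when $\lambda_1=\cdots=\lambda_{r+1}$. If some $\xi_j=0$ then the extension splits off the summand $\shF_j$ and the projector onto it is a non-scalar endomorphism, so $\shE$ is not simple. Conversely, if every $\xi_i$ is nonzero the relation $\lambda_i\xi_i=\lambda_{r+1}\xi_i$ forces $\lambda_i=\lambda_{r+1}$, and $\shE$ is simple. This proves $\shE$ simple $\iff [\shE]\in U$.

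For the parametrization, given two simple classes $[\shE],[\shE']\in U$, a nonzero $\phi\in\Hom(\shE,\shE')$ yields scalars with $\lambda_i\xi_i=\lambda_{r+1}\xi'_i$; since $\xi_i,\xi'_i\neq 0$, all $\lambda_\bullet$ are nonzero, so the images of $\xi_i$ and $\xi'_i$ coincide in $\PP(\Ext^1(\shF_{r+1},\shF_i))$. The inverse of $\phi$ exists by symmetry, and this is exactly the weak equivalence of extensions, giving the identification
$$
U/\!\sim_w \;\cong\; \prod_{i=1}^r \PP\bigl(\Ext^1(\shF_{r+1},\shF_i)\bigr).
$$
The main technical nuisance is bookkeeping the sign conventions when matching the diagram commutativity with the equation $\lambda_i\xi_i=\lambda_{r+1}\xi'_i$, and making sure each application of $\Hom(\shF_i,\shF_j)=0$ is with the intended pair of indices; once this is fixed, everything is a formal $\Hom$--$\Ext$ long exact sequence computation.
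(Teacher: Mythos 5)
Your argument is correct and follows essentially the same route as the paper: the Hom-vanishing and simplicity hypotheses force any morphism of such extensions to act by scalars $(\lambda_1,\dots,\lambda_{r+1})$ on $\bigoplus_i\shF_i$ and on $\shF_{r+1}$, and compatibility with the extension classes yields $\lambda_i\xi_i=\lambda_{r+1}\xi'_i$, from which both the simplicity criterion $[\shE]\in U$ and the equivalence $\Hom(\shE,\shE')\neq 0 \iff [\shE]\thicksim_w[\shE']$ follow. The only difference is presentational: where you invoke the standard pushforward/pullback functoriality of $\Ext^1$ (plus a splitting argument when some $\xi_j=0$), the paper carries out the same computations explicitly via the connecting homomorphism $\delta$ and the exact sequence $0\arr\Hom(\shE,\bigoplus_i\shF_i)\arr\Hom(\shE,\shE)\arr\Hom(\shE,\shF_{r+1})\cong k\arr 0$.
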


\begin{proof}
	Set $\shF:= \bigoplus_{i=1}^r \shF_i$ and $V:=\bigoplus_{i=1}^r \Ext^1(\shF_{r+1},\shF_i) \cong \Ext^1(\shF_{r+1},\shF)$. Note that under the isomorphisms
  \[
  \begin{tikzpicture}[xscale=2.8,yscale=-0.9]
    \node (A0_0) at (0, 0) {$\Hom(\shF,\shF)$};
    \node (A0_1) at (1, 0) {$\displaystyle{\bigoplus_{i=1}^r \Hom(\shF_i,\shF_i)}$};
    \node (A0_2) at (2, 0) {$k^r$};
    \node (A1_0) at (0, 1) {$\id_\shF$};
    \node (A1_1) at (1, 1) {$(\id_{\shF_1},\dots,\id_{\shF_r})$};
    \node (A1_2) at (2, 1) {$(1,\dots,1)$};
    \path (A0_0) edge [->] node [auto] {$\scriptstyle{\cong}$} (A0_1);
    \path (A1_0) edge [|->] node [auto] {$\scriptstyle{}$} (A1_1);
    \path (A0_1) edge [->] node [auto] {$\scriptstyle{\cong}$} (A0_2);
    \path (A1_1) edge [|->] node [auto] {$\scriptstyle{}$} (A1_2);
  \end{tikzpicture}
  \]
we can identify $\id_{\shF_i}$ with elements of $\Hom(\shF,\shF)$ such that $\id_{\shF}=\sum_{i=1}^r\id_{\shF_i}$. Moreover we have	
	$$
	  \Hom(\shF,\shF_{r+1}) = \Hom(\shF_{r+1},\shF) = 0.
	$$
	
	\emph{First Step.} Let $[\shE] = (\eta_1,\dots,\eta_r) \in V$ and take the corresponding exact sequence
	\begin{align}\label{exact sequence for the global extension}
	 0 \arr \shF \arrdi{\alpha} \shE \arrdi{\beta} \shF_{r+1} \arr 0.
	\end{align}
	We claim that the map
	$$
	  k^r \cong \Hom(\shF,\shF) \arrdi{\delta} V
	$$
	verifies that $\delta(\id_{\shF_i})=\eta_i$ for all $i$. In particular $\delta$ is injective if and only if $\eta_i\neq 0$ for all $i=1,\dots,r$ and therefore if and only if $[\shE] \in U$.
	Indeed, applying $\Hom(-,\shF)$ to (\ref{exact sequence for the global extension}) and using the isomorphism $\Hom(-,\shF) \cong \bigoplus_{i=1}^r \Hom(-,\shF_i)$ and the fact that one can take as an injective resolution of $\shF$ the direct sum of injective resolutions of $\shF_i$, we have a commutative diagram (see \cite[Chapter III, Proposition 6.4]{Harti})
  \[
  \begin{tikzpicture}[xscale=2.9,yscale=-0.6]
    \node (A1_0) at (0, 1) {$\Hom(\shF_i,\shF_i)$};
    \node (A1_1) at (1, 1) {$\Hom(\shF,\shF_i)$};
    \node (A1_2) at (2, 1) {$\Ext^1(\shF_{r+1},\shF_i)$};
    \node (A3_1) at (1, 3) {$\Hom(\shF,\shF)$};
    \node (A3_2) at (2, 3) {$\Ext^1(\shF_{r+1},\shF)$};
    \path (A3_1) edge [->] node [auto] {$\scriptstyle{\delta}$} (A3_2);
    \path (A1_2) edge [->] node [auto] {$\scriptstyle{}$} (A3_2);
    \path (A1_1) edge [->] node [auto] {$\scriptstyle{}$} (A1_2);
    \path (A1_1) edge [->] node [auto] {$\scriptstyle{}$} (A3_1);
    \path (A1_0) edge [->] node [auto] {$\scriptstyle{\cong}$} (A1_1);
  \end{tikzpicture}
  \]
      \noindent which tells us that $\delta$ verifies $\delta(\id_{\shF_i})\in\Ext^1(\shF_{r+1},\shF_i)$. Finally, by linearity, we have that
      $$
      (\eta_1, \dots, \eta_r)=\delta(\id_{\shF})=\sum_{i=1}^r\delta(\id_{\shF_i})=(\delta(\id_{\shF_1}),\dots,\delta(\id_{\shF_r})).
      $$

      \emph{Second step.} We claim that $\shE$ is simple if and only if $[\shE] \in U$.
      Applying $\Hom(-,\shF)$ to (\ref{exact sequence for the global extension}) we have an exact sequence
      $$
	 0 \arr \Hom(\shE,\shF) \arr \Hom(\shF,\shF) \arrdi{\delta} \Ext^1(\shF_{r+1},\shF)
      $$
      and therefore $\Hom(\shE,\shF) = 0$ if and only if $\delta$ is injective and so if and only if $[\shE] \in U$. Applying now $\Hom(-,\shF_{r+1})$ to the same sequence we get
      $$
	0 \arr \Hom(\shF_{r+1},\shF_{r+1}) \arr \Hom(\shE,\shF_{r+1}) \arr 0
      $$
      which tells us that $\Hom(\shE,\shF_{r+1}) \cong k$ is generated by $\beta$ (see (\ref{exact sequence for the global extension})). Finally we apply $\Hom(\shE,-)$ again to (\ref{exact sequence for the global extension}) obtaining the exact sequence
      $$
      0 \arr \Hom(\shE,\shF) \arr \Hom(\shE,\shE) \arr \Hom(\shE,\shF_{r+1}) \cong k \arr 0
      $$
      where the surjectivity of the second map follows from the fact that $\id_{\shE}$ is sent to $\beta$. So we can conclude that $\shE$ is simple if and only if $\Hom(\shE,\shF) = 0$ and so if and only if $[\shE] \in U$.

      \emph{Third step.} We're going to prove the following claim: let $[\shE]=(\eta_1,\dots,\eta_r)$, $[\shE']=(\xi_1,\dots,\xi_r)$ be extensions from $U$, the first one corresponding to the sequence (\ref{exact sequence for the global extension}), the second one to
      $$
	  0 \arr \shF \arrdi{\alpha'} \shE' \arrdi{\beta'} \shF_{r+1} \arr 0.
      $$
      Then
      $$
	  \Hom(\shE,\shE') \neq 0  \iff \forall i \ \ \ \exists \omega_i \in k^* \ \ \text{ s.t. } \ \ \xi_i = \omega_i \eta_i,
      $$
      and in this case $[\shE] \thicksim_w [\shE']$.

      $\Longleftarrow)$ It's enough to check that, if $\shF \arrdi{\psi} \shF$ is the isomorphism given by a diagonal matrix with diagonal $(\omega_1, \dots, \omega_r)$, then the exact sequence on the first row of
  $$
  \begin{tikzpicture}[xscale=1.6,yscale=-1.1]
    \node (A0_0) at (0, 0) {$0$};
    \node (A0_1) at (1, 0) {$\shF$};
    \node (A0_2) at (2, 0) {$\shE'$};
    \node (A0_3) at (3, 0) {$\shF_{r+1}$};
    \node (A0_4) at (4, 0) {$0$};
    \node (A1_0) at (0, 1) {$0$};
    \node (A1_1) at (1, 1) {$\shF$};
    \node (A1_2) at (2, 1) {$\shE'$};
    \node (A1_3) at (3, 1) {$\shF_{r+1}$};
    \node (A1_4) at (4, 1) {$0$};
    \path (A0_1) edge [->] node [auto] {$\scriptstyle{\psi}$} (A1_1);
    \path (A0_0) edge [->] node [auto] {$\scriptstyle{}$} (A0_1);
    \path (A0_1) edge [->] node [auto] {$\scriptstyle{\alpha' \psi}$} (A0_2);
    \path (A1_0) edge [->] node [auto] {$\scriptstyle{}$} (A1_1);
    \path (A0_3) edge [->] node [auto] {$\scriptstyle{\id}$} (A1_3);
    \path (A1_1) edge [->] node [auto] {$\scriptstyle{\alpha'}$} (A1_2);
    \path (A0_3) edge [->] node [auto] {$\scriptstyle{}$} (A0_4);
    \path (A0_2) edge [->] node [auto] {$\scriptstyle{\id}$} (A1_2);
    \path (A1_2) edge [->] node [auto] {$\scriptstyle{\beta'}$} (A1_3);
    \path (A0_2) edge [->] node [auto] {$\scriptstyle{\beta'}$} (A0_3);
    \path (A1_3) edge [->] node [auto] {$\scriptstyle{}$} (A1_4);
  \end{tikzpicture}
  $$
\noindent corresponds to $[\shE]$. In fact in this case we will obtain a weak equivalence $[\shE] \thicksim_w [\shE']$. So let's
apply the functor $\Hom(-,\shF)$ to the above diagram; using the properties of the derived functors, we get a commutative diagram
  \[
  \begin{tikzpicture}[xscale=3.2,yscale=-1.1]
    \node (A0_0) at (0, 0) {$\Hom(\shF,\shF)$};
    \node (A0_1) at (1, 0) {$\Ext^1(\shF_{r+1},\shF)$};
    \node (A1_0) at (0, 1) {$\Hom(\shF,\shF)$};
    \node (A1_1) at (1, 1) {$\Ext^1(\shF_{r+1},\shF)$};
    \path (A0_0) edge [->] node [auto] {$\scriptstyle{\delta'}$} (A0_1);
    \path (A0_0) edge [->] node [auto,swap] {$\scriptstyle{\psi^*}$} (A1_0);
    \path (A0_1) edge [->] node [auto] {$\scriptstyle{\id}$} (A1_1);
    \path (A1_0) edge [->] node [auto,swap] {$\scriptstyle{\delta''}$} (A1_1);
  \end{tikzpicture}
  \]

\noindent where $\delta'$ (respectively $\delta''$) is the connecting morphism corresponding to the exact sequence on the second row (resp. on the first row).
Using the usual identification $\Hom(\shF,\shF) \cong k^r$, the map $\psi^*$ has the same representation of $\psi$ as matrix, i.e. it is diagonal with entries $(\omega_1,\dots,\omega_r)$. So we get the relation
      $$
	\delta''(\id_\shF)=\delta'(\psi^{*-1}(1,\dots,1))=\delta'(\omega_1^{-1},\dots, \omega_r^{-1}) = (\omega_1^{-1}\xi_1,\dots,\omega_r^{-1}\xi_r)=\delta(\id_\shF).
      $$

      $\then)$ Let $\shE \arrdi{u} \shE'$ be a non zero map. We start defining the dashed arrows in
  $$
  \begin{tikzpicture}[xscale=1.6,yscale=-1.1]
    \node (A0_0) at (0, 0) {$0$};
    \node (A0_1) at (1, 0) {$\shF$};
    \node (A0_2) at (2, 0) {$\shE$};
    \node (A0_3) at (3, 0) {$\shF_{r+1}$};
    \node (A0_4) at (4, 0) {$0$};
    \node (A1_0) at (0, 1) {$0$};
    \node (A1_1) at (1, 1) {$\shF$};
    \node (A1_2) at (2, 1) {$\shE'$};
    \node (A1_3) at (3, 1) {$\shF_{r+1}$};
    \node (A1_4) at (4, 1) {$0$};
    \path (A0_1) edge [->,dashed] node [auto] {$\scriptstyle{\psi}$} (A1_1);
    \path (A0_0) edge [->] node [auto] {$\scriptstyle{}$} (A0_1);
    \path (A0_1) edge [->] node [auto] {$\scriptstyle{\alpha}$} (A0_2);
    \path (A1_0) edge [->] node [auto] {$\scriptstyle{}$} (A1_1);
    \path (A0_3) edge [->,dashed] node [auto] {$\scriptstyle{\lambda}$} (A1_3);
    \path (A1_1) edge [->] node [auto] {$\scriptstyle{\alpha'}$} (A1_2);
    \path (A0_3) edge [->] node [auto] {$\scriptstyle{}$} (A0_4);
    \path (A0_2) edge [->] node [auto] {$\scriptstyle{u}$} (A1_2);
    \path (A1_2) edge [->] node [auto] {$\scriptstyle{\beta'}$} (A1_3);
    \path (A0_2) edge [->] node [auto] {$\scriptstyle{\beta}$} (A0_3);
    \path (A1_3) edge [->] node [auto] {$\scriptstyle{}$} (A1_4);
  \end{tikzpicture}.
  $$
	Since $\beta' u \in \Hom(\shE,\shF_{r+1})=<\beta>_k$ there exists $\lambda \in k$ such that $\beta' u = \lambda \beta$. Since $\lambda$ is defined, also $\psi$ is automatically defined. If $\lambda = 0$, then $\beta' u = 0$ and so $u$ factorizes through a map $\shE \arr \shF$. Since we have proved that $\Hom(\shE,\shF)=0$ we also obtain $u=0$, a contradiction. So we get $\lambda \in k^*$.
	
	We can note that, since $\Hom(\shF_i,\shF_j)=0$ for $i \neq j$ and all the $\shF_i$ are simple, $\psi$ has to be a diagonal matrix. Call $(\mu_1,\dots,\mu_r)$ its diagonal. Applying now $\Hom(-,\shF)$ to the above diagram we have a commutative diagram
  \[
  \begin{tikzpicture}[xscale=3.2,yscale=-1.1]
    \node (A0_0) at (0, 0) {$\Hom(\shF,\shF)$};
    \node (A0_1) at (1, 0) {$\Ext^1(\shF_{r+1},\shF)$};
    \node (A1_0) at (0, 1) {$\Hom(\shF,\shF)$};
    \node (A1_1) at (1, 1) {$\Ext^1(\shF_{r+1},\shF)$};
    \path (A0_0) edge [->] node [auto] {$\scriptstyle{\delta'}$} (A0_1);
    \path (A0_0) edge [->] node [auto,swap] {$\scriptstyle{\psi^*}$} (A1_0);
    \path (A0_1) edge [->] node [auto] {$\scriptstyle{\lambda}$} (A1_1);
    \path (A1_0) edge [->] node [auto,swap] {$\scriptstyle{\delta}$} (A1_1);
  \end{tikzpicture}
  \]
	and, as remarked above, $\psi^*$ is a diagonal matrix with entries $(\mu_1,\dots,\mu_r)$. The commutativity of the above diagram gives relations
	$$
	  \lambda \xi_i = \mu_i \eta_i.
	$$
	Since we are assuming that $\eta_i,\xi_i \neq 0$, we obtain $\mu_i \in k^*$ and the $\omega_i = \mu_i/\lambda$ satisfy the requirements.
\end{proof}

\begin{rem}\label{Ext i for ACM divisors of maximal degree}
	Let $X$ be a strong del Pezzo surface of degree $d$ and let $C, D$ be distinct initialized ACM divisors of maximal degree $d$. Then $C-D$ and $D-C$ can not be equivalent to an effective divisor since otherwise, having $C.H=D.H=d$, we must have $C \sim D$. So
	$$
	  \h^0(C-D)=\h^2(C-D)=0 \then -\h^1(C-D)=\chi(C-D)=d-C.D-1.
	$$
	We can conclude that
	$$
	  \Ext^2(\odi{X}(C),\odi{X}(D)) = \Hom(\odi{X}(C),\odi{X}(D))=0
	$$
	and
	$$
	  \dim_k \Ext^1(\odi{X}(C),\odi{X}(D)) = 1+C.D-d.
	$$
\end{rem}


\begin{thm}\label{thetheorem}
	Let $X\subseteq\PP^d$ be a strong del Pezzo surface of degree $d$ less or equal than six. Then for any integer $n \geq 2$ there exists a family of dimension $\geq n-1$ of non-isomorphic initialized simple $0$-regular ACM vector bundles of rank $n$.
\end{thm}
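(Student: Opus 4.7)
The plan is to construct the rank-$n$ bundles as extensions via Proposition \ref{base step for extensions}. For initial, small $n$, choose $n$ distinct initialized ACM line bundles $\shL_i=\odi{X}(D_i)$ of maximal degree $D_i.H=d$, picked from the explicit list of Theorem \ref{explicit computation of initialized ACM line bundles}, and produce bundles $\shE$ as extensions
$$
0\longrightarrow \bigoplus_{i=1}^{n-1}\shL_i\longrightarrow \shE \longrightarrow \shL_n\longrightarrow 0.
$$
By Proposition \ref{ACM divisors of maximal degree are 0 regular}, each $\shL_i$ is $0$-regular; by Remark \ref{Ext i for ACM divisors of maximal degree}, distinct maximal-degree ACM divisors have non-effective difference, so $\Hom(\shL_i,\shL_j)=0$ for $i\neq j$, and Proposition \ref{base step for extensions} applies. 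The resulting simple $\shE$ are then parameterized (up to isomorphism) by $\prod_{i=1}^{n-1}\PP(\Ext^1(\shL_n,\shL_i))$.

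The three remaining requirements on $\shE$ (ACM, initialized, $0$-regular) all follow from the long exact cohomology sequences of the above extension and its twists by $\odi{X}(-1)$ and $\odi{X}(-2)$, since each $\shL_i$ satisfies the corresponding vanishings; maximality of $\deg D_i$ is precisely what gives $H^2(\shL_i(-2))=0$, which is needed for $0$-regularity of $\shE$. By Remark \ref{Ext i for ACM divisors of maximal degree}, the family dimension equals
$$
\sum_{i=1}^{n-1}\bigl(\dim\Ext^1(\shL_n,\shL_i)-1\bigr)=\sum_{i=1}^{n-1}(D_n.D_i-d).
$$
The combinatorial task is therefore to exhibit, for each $d\in\{3,4,5,6\}$, an $n$-tuple of maximal-degree initialized ACM divisors with $\sum_{i=1}^{n-1}D_n.D_i\geq (n-1)(d+1)$. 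Pairs with $D_n+D_i\sim 2H$ saturate Corollary \ref{upperbound} and give $\dim\Ext^1=3$ (contributing $2$ to the family dimension each), so one hunts within the explicit list of Theorem \ref{explicit computation of initialized ACM line bundles} for such partners; this is a finite check once $d$ is fixed.

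The main obstacle is that, for small $d$ (most severely $d=6$, where Theorem \ref{explicit computation of initialized ACM line bundles} gives only $8$ initialized ACM divisors of maximal degree), the supply of distinct line bundles is finite, so the direct construction above cannot immediately reach arbitrarily large $n$. To push beyond this threshold, I would iterate: given a simple rank $k$ ACM $0$-regular initialized bundle $\shE_k$ already built, reapply Proposition \ref{base step for extensions} using $\shE_k$ (which is simple by construction) in place of one of the $\shF_i$, together with a further maximal-degree line bundle as $\shF_{r+1}$. Verifying $\Hom(\shE_k,\shL)=\Hom(\shL,\shE_k)=0$ and $\Ext^1(\shL,\shE_k)\neq 0$ reduces, via the defining exact sequence of $\shE_k$, to intersection computations for line bundles analogous to those controlled by Remark \ref{Ext i for ACM divisors of maximal degree} and Corollary \ref{upperbound}. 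Each such iteration adds at least one to the family dimension, yielding the required bound $\geq n-1$ for every $n\geq 2$.
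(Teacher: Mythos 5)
Your first construction (a single application of Proposition \ref{base step for extensions} to $n$ distinct maximal-degree initialized ACM line bundles) is fine as far as it goes, and you correctly identify the real obstruction: the supply of such line bundles is finite (e.g.\ only $8$ classes for $d=6$), so this cannot reach arbitrary $n$. The gap is in your proposed fix. First, the hypotheses of Proposition \ref{base step for extensions} for the pair $(\shE_k,\shL)$ do \emph{not} reduce to intersection numbers once line-bundle classes must be reused inside the filtration of $\shE_k$: if $\shL$ coincides with the top quotient of $\shE_k$ then the quotient map already gives $\Hom(\shE_k,\shL)\neq 0$, if it coincides with the bottom sub-line-bundle then $\Hom(\shL,\shE_k)\neq 0$, and for intermediate reuses the vanishing depends on the particular extension classes chosen, not only on the numbers $D_i.L$; you exhibit no scheme of choices keeping all required Hom's zero for all $n$. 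Second, the dimension bookkeeping is unjustified: with $\shE_k$ fixed, the proposition parametrizes the new simple bundles only by $\PP(\Ext^1(\shL,\shE_k))$, so the claim that ``each iteration adds at least one to the family dimension'' would require either proving non-isomorphy as $\shE_k$ itself varies (which the proposition, stated for fixed $\shF_i$, does not give) or showing directly that $\dim_k\Ext^1(\shL,\shE_{n-1})\geq n$; you do neither.

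The paper circumvents the finiteness problem differently. It fixes one pair $C,D$ of maximal-degree initialized ACM divisors with $C.D=d+1$ (given explicitly for each $3\leq d\leq 6$), so that $\dim_k\Ext^1(\odi{X}(D),\odi{X}(C))=2$ and Proposition \ref{base step for extensions} produces a $\PP^1$-family of pairwise Hom-orthogonal simple rank-$2$ bundles $\shE_1,\dots,\shE_m$. These rank-$2$ bundles, not line bundles, are then taken as the building blocks $\shF_1,\dots,\shF_m$ in one further application of the proposition with $\shF_{m+1}=\odi{X}(E)$, where $E=2H-C$ (and then once more with $F=2H-D$ for even rank); all the needed Hom and $\Ext^2$ vanishings and the $\Ext^1$ dimensions reduce to the fixed intersection numbers of $C,D,E,F$, and the parameter spaces $(\PP^2)^m$ and $\PP^{1+3m}$ give the bound $\geq n-1$. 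To salvage your iteration you would have to supply both the Hom-vanishing scheme under reuse and the growth of $\dim_k\Ext^1(\shL,\shE_k)$; as written, the proposal is a plausible plan rather than a proof.
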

\begin{proof}
	We know that $X$ corresponds to the blow-up of $r=9-d$ points in general position of $\PP^2_k$. Let $C,D$ be distinct initialized ACM divisors of maximal degree $d$ satisfying the condition
	$$
	C.D = 1 + d.
	$$
	Before continuing we show that we can always find a couple $(C,D)$ satisfying the above condition. Below by $X^r$ we mean a del Pezzo surface blow-up of $r$ points of $\PP^2_k$.
	\begin{center}
	\begin{tabular}{|c|c|l|c|}
	\hline
		$X$   & $d$&$
			    \begin{array}{l}
		            	C \\ D
		            \end{array}
			    $
			   & $C.D=1+d$ \\
	\hline
		$X^3$ & 6 & $
			      \begin{array}{l}
			      	3l-2e_1-e_2 \\ 3l-2e_2 - e_3
			      \end{array}
			    $
			  & $7$ \\
	\hline
		$X^4$ & 5 & $
			      \begin{array}{l}
			      	3l-2e_1-e_2 - e_3 \\ 3l-2e_2-e_3 - e_4
			      \end{array}
			    $
			  & $6$ \\
	\hline
		$X^5$ & 4 & $
			      \begin{array}{l}
			      	3l-2e_1-e_2 - e_3 -e_4\\ 3l-2e_2-e_3 - e_4 - e_5
			      \end{array}
			    $
			  & $5$ \\
	\hline
		$X^6$ & 3 & $
			      \begin{array}{l}
			      	3l-2e_1-e_2 - e_3 -e_4-e_5\\ 3l-2e_2-e_3 - e_4 - e_5-e_6
			      \end{array}
			    $
			  & $4$ \\
	\hline
	\end{tabular}
	\end{center}
	
	Set $E=2H-C$ and $F=2H-D$. Since $E^2 = E.H -2$ and $E.H=d$ and the same is true for $F$, we have that $E,F$ also are initialized ACM divisors of maximal degree. A direct computation gives the equalities:
	\begin{align*}
		1+C.E -d= 1+D.F -d= 3 \\
		1+C.D-d = 1 + E.F -d = 2 \\
		1+D.E-d = 1+C.F -d = 0.
	\end{align*}
	It's clear from Lemma \ref{lowerbound} that $C,D,E,F$ are distinct as equivalence classes. In what follows we make use of Proposition \ref{base step for extensions} and Remark \ref{Ext i for ACM divisors of maximal degree}. Moreover, from Proposition \ref{ACM divisors of maximal degree are 0 regular}, we have that the invertible sheaves associated to the divisors $C,D,E,F$ are $0$-regular. Since all the vector bundles obtained below are subsequent extensions of ACM, initialized and $0$-regular line bundles, the same condition will be satisfied by those bundles.
	
	\emph{Rank 2}. It's enough to take extensions of $\odi{X}(C)$ by $\odi{X}(E)$, which satisfy the hypothesis of Proposition \ref{base step for extensions}. In this way we obtain a family parametrized by $\PP^2$ of simple ACM vector bundles.
	
	\emph{Rank $2m+1$.} First consider extensions of $\odi{X}(D)$ by $\odi{X}(C)$. Again by \ref{base step for extensions} this gives a family parametrized by $\PP^1$ of simple vector bundles without non-zero morphisms among them. So we can take distinct elements $\shE_1, \dots, \shE_m \in \PP^1$, i.e. satisfying the exact sequences
	\begin{align}\label{first exact sequence for extensions}
	  0 \arr \odi{X}(C) \arr \shE_i \arr \odi{X}(D) \arr 0.
	\end{align}
	Now let's consider extensions of the form
	\begin{align}\label{second exact sequence for extensions}
	  0 \arr \bigoplus_{i=1}^m \shE_i \arr \shH \arr \odi{X}(E) \arr 0.
	\end{align}
	Applying $\Hom(-,\odi{X}(E))$ and $\Hom(\odi{X}(E),-)$ to (\ref{first exact sequence for extensions}) we see that
	$$
	  \Hom(\shE_i,\odi{X}(E)) = \Hom(\odi{X}(E),\shE_i) = 0.
	$$
	Therefore we can deduce that the sheaves $\shE_1,\dots,\shE_m,\odi{X}(E)$ satisfy the hypothesis of  Proposition \ref{base step for extensions}. We have to compute $\Ext^1(\odi{X}(E),\shE_i)$. In general, if $R$ is any ACM, initialized divisors of maximal degree $d$, applying $\Hom(\odi{X}(R),-)$ to (\ref{first exact sequence for extensions}), we obtain the exact sequences
	$$
	0 \arr \Ext^1(\odi{X}(R),\odi{X}(C)) \arr \Ext^1(\odi{X}(R),\shE_i) \arr \Ext^1(\odi{X}(R),\odi{X}(D)) \arr 0
	$$
	and so
	\begin{align}\label{dimension of ext 1 for R E i}
	  \dim_k \Ext^1(\odi{X}(R),\shE_i) = 2 - 2N + C.R + D.R.
	\end{align}
	If we take $R=E$ we find $\dim_k \Ext^1(\odi{X}(E),\shE_i) = 3$. So we have a family parametrized by $(\PP^2)^m$ of simple vector bundles of rank $2m+1$.
	
	\emph{Rank $2m+2$.} Let $\shH$ be one of the extensions of rank $2m+1$ obtained above. We consider the extensions of the form
	$$
	  0 \arr \shH \arr \shM \arr \odi{X}(F) \arr 0.
	$$
	Applying $\Hom(\odi{X}(F),-)$ and $\Hom(-,\odi{X}(F))$ to both (\ref{first exact sequence for extensions}) and (\ref{second exact sequence for extensions}) we get
	$$
	  \Hom(\shH,\odi{X}(F)) = \Hom(\odi{X}(F),\shH) = 0.
	$$
	So we are again in the hypothesis of Proposition \ref{base step for extensions}. We have to compute $\Ext^1(\odi{X}(F),\shH)$. If we apply $\Hom(\odi{X}(F),-)$ to (\ref{second exact sequence for extensions}) we get an exact sequence
	$$
	  0 \arr \Ext^1(\odi{X}(F),\bigoplus_{i=1}^m \shE_i) \arr \Ext^1(\odi{X}(F),\shH) \arr \Ext^1(\odi{X}(F),\odi{X}(E)) \arr 0,
	$$
	where the vanishing of $\Ext^2(\odi{X}(F),\shE_i)$ follows applying $\Hom(\odi{X}(F),-)$ to (\ref{first exact sequence for extensions}) and remembering that
	$$
	  \Ext^2(\odi{X}(F),\odi{X}(C)) = \Ext^2(\odi{X}(F),\odi{X}(D)) = 0.
	$$
	So, using also (\ref{dimension of ext 1 for R E i}), we have
	$$
	  \dim_k \Ext^1(\odi{X}(F),\shH) = 2 + 3m.
	$$
	In this way we obtain a family parametrized by $\PP^{1+3m}$ of simple ACM vector bundles of rank $2m+2$.
\end{proof}

We will end this paper showing that bundles constructed on the previous theorem are semistable and unstable. Following \cite{HuL}, we recall that a vector bundle $\shE$ on a smooth projective variety $X\subseteq\PP^d$ is \emph{semistable} if for every nonzero coherent subsheaf $\shF$ of $\shE$ we have the inequality
$$
P(\shF)/\rk(\shF)\leq P(\shE)/\rk(\shE),
$$
where $P(\shE)$ is the Hilbert polynomial of the sheaf and the order is with respect to their asymptotic behavior. If one has the strict inequality for any proper subsheaf the bundle is called \emph{stable}. There is another definition using the \emph{slope}, which is defined as $\mu(\shE):=\de(\shE)/\rk(\shE)$. We say that $\shE$ is $\mu$-\emph{(semi)stable} if for every subsheaf $\shF$ of $\shE$ with $0<\rk\shF <\rk\shE$, $\mu(\shF)<\mu(\shE)$ (resp. $\mu(\shF)\leq\mu(\shE))$. The four notions are related as follows:

$$
\mu-stable\Rightarrow stable \Rightarrow semistable \Rightarrow \mu-semistable.
$$

In order to show the semistability of bundles constructed in \ref{thetheorem}, we are going to use the following result (cfr. \cite[Lemma 1.4]{Mar}):

\begin{lemma}\label{maruyama}
Assume that
$$
0\longrightarrow\shE' \longrightarrow\shE \longrightarrow\shE''\longrightarrow 0
$$
is a short exact sequence of coherent sheaves with $\frac{P(\shE)}{\rk(\shE)}=\frac{P(\shE')}{\rk(\shE')}=\frac{P(\shE'')}{\rk(\shE'')}$. Then $\shE$ is semistable if and only if $\shE'$ and $\shE''$ are semistable.
\end{lemma}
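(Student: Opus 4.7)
The plan is to handle both implications by chasing any candidate destabilizing subsheaf through the given short exact sequence. Throughout, let $p$ denote the common reduced Hilbert polynomial $P(\shE)/\rk\shE = P(\shE')/\rk\shE' = P(\shE'')/\rk\shE''$, and understand $\leq$ among polynomials in the asymptotic (lexicographic) sense of \cite{HuL}. The formal ingredients I will rely on are additivity of rank and Hilbert polynomial in short exact sequences, together with the fact that the asymptotic order on polynomials is compatible with addition and with multiplication by positive integers.

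For the forward direction, I would assume $\shE$ is semistable and argue as follows. Any nonzero subsheaf of $\shE'$ is automatically a subsheaf of $\shE$ and so inherits the bound, yielding semistability of $\shE'$ immediately. For $\shE''$, take any nonzero $\shG \subseteq \shE''$ and form its preimage $\shH \subseteq \shE$, which fits in
$$
0 \arr \shE' \arr \shH \arr \shG \arr 0.
$$
Applying semistability of $\shE$ to $\shH$ gives $P(\shE') + P(\shG) \leq p(\rk\shE' + \rk\shG)$, and subtracting the equality $P(\shE') = p\,\rk\shE'$ yields $P(\shG)/\rk\shG \leq p$, so $\shE''$ is semistable.

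For the reverse direction, assume both $\shE'$ and $\shE''$ are semistable and let $\shF \subseteq \shE$ be nonzero. The natural move is to set $\shF' = \shF \cap \shE'$ and $\shF'' = \Imm(\shF \to \shE'')$, producing an exact sequence $0 \to \shF' \to \shF \to \shF'' \to 0$. When either $\shF'$ or $\shF''$ vanishes, $\shF$ embeds into $\shE'$ or $\shE''$ respectively, and the bound on $P(\shF)/\rk\shF$ follows at once from semistability of the nonzero piece. Otherwise I add the two estimates $P(\shF') \leq p\,\rk\shF'$ and $P(\shF'') \leq p\,\rk\shF''$ coming from semistability of $\shE'$ and $\shE''$, and use additivity to conclude $P(\shF) \leq p\,\rk\shF$, as required.

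The argument is essentially bookkeeping, and there is no substantive obstacle: the single point deserving care is that the asymptotic partial order on polynomials is preserved under addition, which is what allows the individual semistability estimates on $\shF'$ and $\shF''$ to combine into the single required bound on $P(\shF)$. Once that is observed, both implications follow in parallel from the same manipulation of the short exact sequence.
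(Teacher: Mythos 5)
Your argument is correct, and in fact it is the standard proof of this lemma; note that the paper itself does not prove the statement at all but simply quotes it from Maruyama (\cite[Lemma 1.4]{Mar}), so there is no internal proof to compare against. Both directions are handled exactly as one would: additivity of $P$ and $\rk$ in short exact sequences, the compatibility of the asymptotic order with addition and with multiplication by positive integers, the preimage construction for subsheaves of $\shE''$, and the decomposition $0 \to \shF\cap\shE' \to \shF \to \Imm(\shF\to\shE'') \to 0$ for the converse. The only point worth flagging is the phrase ``coherent sheaves'' in the statement: your manipulations implicitly divide by $\rk\shF'$ and $\rk\shF''$, so they require that every nonzero subsheaf occurring has positive rank (equivalently, that no torsion subsheaves appear). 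In the generality of arbitrary coherent sheaves one would either assume purity or argue separately that a semistable sheaf of positive rank admits no nonzero rank-zero subsheaf; in the situation where the paper invokes the lemma, all sheaves involved are locally free on an integral projective surface, so subsheaves are automatically torsion-free and your argument goes through verbatim.
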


\begin{prop}
Let $X\subseteq\PP^d$ be a strong del Pezzo surface of degree $d$ less or equal than six. Then the ACM  bundles constructed in the proof of \ref{thetheorem} are all strictly semistable bundles of constant slope $d$.
\end{prop}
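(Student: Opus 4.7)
The plan is to exploit Lemma \ref{maruyama}: once we verify that each of the line bundles $\odi{X}(R)$ appearing in the construction of Theorem \ref{thetheorem} has the same Hilbert polynomial, semistability will propagate from line bundles (trivially stable) to every iterated extension built from them, and the reduced Hilbert polynomial will be constant throughout.

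The first step is the key computation. For any initialized ACM divisor $R$ of maximal degree $d$, Theorem \ref{maintheorem} yields $R.H = d$ and $R^2 = d - 2$, so by Riemann-Roch on $X$,
$$
P_R(t) := \chi(\odi{X}(R + tH)) = \frac{(R+tH)(R+(t+1)H)}{2} + 1 = \frac{d(t+1)(t+2)}{2}.
$$
This expression does not depend on $R$, so the four line bundles $\odi{X}(C), \odi{X}(D), \odi{X}(E), \odi{X}(F)$ share a common Hilbert polynomial $P$, which equals their reduced Hilbert polynomial since they have rank one.

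The main step is then an induction following the construction of Theorem \ref{thetheorem}. A line bundle on $X$ is torsion-free of rank one, hence stable. For a rank $2$ bundle the defining sequence $0 \arr \odi{X}(E) \arr \shE \arr \odi{X}(C) \arr 0$ has both ends with reduced Hilbert polynomial $P$, so Lemma \ref{maruyama} gives semistability of $\shE$ (again with reduced Hilbert polynomial $P$). The same lemma applied inductively handles each $\shE_i$, then the direct sum $\bigoplus_{i=1}^m \shE_i$ (via the filtration $\shE_1\oplus\cdots\oplus\shE_j \subseteq \shE_1\oplus\cdots\oplus\shE_{j+1}$), then the rank $2m+1$ bundle $\shH$, and finally the rank $2m+2$ bundle $\shM$: at each stage both ends of the defining short exact sequence are semistable with reduced Hilbert polynomial $P$, so the extension is too.

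To conclude, strict semistability is automatic because every bundle of rank $n \geq 2$ in the construction contains a sub-line bundle (for example $\odi{X}(E)$, $\odi{X}(C)$ or $\odi{X}(F)$) with reduced Hilbert polynomial equal to $P$, which blocks stability. The slope is $\mu = c_1 \cdot H / n$; writing $c_1$ as the sum of the first Chern classes of the $n$ line-bundle constituents, each contributing $R.H = d$ to $c_1 \cdot H$, we obtain $\mu = nd/n = d$. The only non-mechanical point in this plan is the opening Hilbert polynomial computation; everything else is forced by Maruyama's lemma applied step by step to the extensions of Theorem \ref{thetheorem}.
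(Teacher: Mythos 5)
Your proposal is correct and follows essentially the same route as the paper: induction through the construction of Theorem \ref{thetheorem} using Lemma \ref{maruyama}, with stability ruled out by the sub-line bundles that the construction exhibits. The only difference is that you spell out explicitly the Riemann--Roch computation $\chi(\odi{X}(R+tH))=\tfrac{d(t+1)(t+2)}{2}$ (and the slope count), which the paper leaves implicit when it says the extensions ``verify the hypothesis'' of Maruyama's lemma.
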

\begin{proof}
Any line bundle is semistable; therefore the semistability of the vector bundles from \ref{thetheorem} is proved by induction just noticing that they verify the hypothesis of Lemma \ref{maruyama}. They can not be stable since the construction exhibits a subbundle contradicting the definition.
\end{proof}

\section{Acknowledgements.}

This research was developed during the P.R.A.G.MAT.I.C school held at the University of Catania (Italy, September 2009). The authors would like to thank the local organizers as well as the speakers for the wonderful atmosphere created during the school. They also acknowledge the support from the University of Catania.

The authors would also like to thank Daniele Faenzi and Giorgio Ottaviani for having proposed us this problem and for their constant support during the school.

\newcommand{\etalchar}[1]{$^{#1}$}
\providecommand{\bysame}{\leavevmode\hbox to3em{\hrulefill}\thinspace}
\providecommand{\MR}{\relax\ifhmode\unskip\space\fi MR }
\providecommand{\MRhref}[2]{%
  \href{http://www.ams.org/mathscinet-getitem?mr=#1}{#2}
}
\providecommand{\href}[2]{#2}


\begin{thebibliography}{KMMR{\etalchar{+}}01}

\bibitem[BGS87]{BGS}
Ragnar-Olaf Buchweitz, Gert-Martin Greuel, and Frank-Olaf Schreyer,
  \emph{{Cohen-Macaulay modules on hypersurface singularities II}}, Inventiones
  Mathematicae \textbf{88} (1987), no.~1, 165--182.

\bibitem[CH08]{CH}
Marta Casanellas and Robin Hartshorne, \emph{{ACM bundles on cubic surfaces}},
  Available from: \url{http://arxiv.org/abs/0801.3600}.

\bibitem[DG01]{DG}
Yuriy Drozd and Gert-Martin Greuel, \emph{{Tame and Wild Projective Curves and
  Classification of Vector Bundles}}, Journal of Algebra \textbf{246} (2001),
  no.~1, 1--54, Available from: \url{http://ns.imath.kiev.ua/\~{}drozd/vb.pdf}.

\bibitem[Dol09]{Dol}
Igor~V Dolgachev, \emph{{Topics in Classical Algebraic Geometry. Part I}},
  2009, Available from:
  \url{http://www.math.lsa.umich.edu/\~{}idolga/topics1.pdf}.

\bibitem[DPT80]{Dem}
Michel Demazure, Henry Pinkham, and Bernard Teissier, \emph{{S\'{e}minaire sur
  les singularit\'{e}s des surfaces. Surfaces de del Pezzo}}, Springer-Verlag,
  1980, Available from:
  \url{http://www.numdam.org/numdam-bin/browse?id=SSS\_1976-1977\_\_\_}.

\bibitem[EH88]{EH}
David Eisenbud and H.~J\"{u}rgen Herzog, \emph{{The classification of
  homogeneous Cohen-Macaulay rings of finite representation type}},
  Mathematische Annalen \textbf{280} (1988), no.~2, 347--352, Available from:
  \url{http://www.msri.org/\~{}de/papers/pdfs/1988-001.pdf}.

\bibitem[Eis02]{EisSyz}
David Eisenbud, \emph{{The Geometry of Syzygies: A Second Course in Commutative
  Algebra and Algebraic Geometry}}, 2002, Available from:
  \url{www.msri.org/people/staff/de/ready.pdf}.

\bibitem[Fae08]{Fa}
Daniele Faenzi, \emph{{Rank 2 arithmetically Cohen-Macaulay bundles on a
  nonsingular cubic surface}}, Journal of Algebra \textbf{319} (2008), no.~1,
  143--186, Available from: \url{http://arxiv.org/pdf/math/0504492v2}.

\bibitem[Har77]{Harti}
Robin Hartshorne, \emph{{Algebraic Geometry}}, Springer-Verlag, 1977.

\bibitem[HL97]{HuL}
Daniel Huybrechts and Manfred Lehn, \emph{{The geometry of moduli spaces of
  sheaves}}, 1997, Available from:
  \url{www.math.uni-bonn.de/people/huybrech/moduli.ps}.

\bibitem[KMMR{\etalchar{+}}01]{Mir}
Jan~Oddvar Kleppe, Juan Migliore, Rosa~Maria Mir\'{o}-Roig, Uwe Nagel, and
  Chris Peterson, \emph{{Gorenstein liaison, complete intersection liaison
  invariants and unobstructedness}}, AMS Bookstore, 2001.

\bibitem[Kn87]{Kno}
Horst Kn\"{o}rrer, \emph{{Cohen-Macaulay modules on hypersurface singularities
  I}}, Inventiones Mathematicae \textbf{88} (1987), no.~1, 153--164.

\bibitem[Kol96]{Kol}
J\'{a}nos Koll\'{a}r, \emph{{Rational curves on algebraic varieties}},
  Springer, 1996.

\bibitem[Man86]{Man}
Yuri Manin, \emph{{Cubic Forms}}, North Holland, 1986.

\bibitem[Mar78]{Mar}
Masaki Maruyama, \emph{{Moduli of stable sheaves II}}, J . Math. Kyoto Univ.
  \textbf{3} (1978), 557--614, Available from:
  \url{http://projecteuclid.org/DPubS/Repository/1.0/Disseminate?view=body\&id%
=pdf\_1\&handle=euclid.kjm/1250522511}.

\bibitem[Mig98]{Mig}
Juan~Carlos Migliore, \emph{{Introduction to liaison theory and deficiency
  modules}}, Birkh\"{a}user, 1998.

\end{thebibliography}
\end{document}